\def\binom#1#2{{#1\choose#2}}
\def\Z{{\mathbb Z}}
 \DeclareFontFamily{U}{wncy}{}
\def\GL{{\rm GL}}
\def\PGL{{\rm PGL}}
\def\Gal{{\rm Gal}}
\def\Cl{{\rm Cl}}
\def\O{{\mathcal O}}
\def\P{{\mathbb P}}
\def\Aut{{\rm Aut}}
\def\R{{\mathbb R}}
\def\F{{\mathbb F}}
\def\Hom{{\rm Hom}}
\def\Q{{\mathbb Q}}
\def\Z{{\mathbb Z}}
\def\P{{\mathbb P}}
\def\F{{\mathbb F}}
\def\Q{{\mathbb Q}}
\def\CC{{\mathcal C}}
\def\Aut{{\text{Aut}}}
\def\Spec{{\text{Spec}}}
\def\Aut{{\operatorname{ Aut}}}
\def \gcd{{\rm gcd}}
\newcommand*{\G}{\mathbb{G}}
\newcommand*{\ra}{\rightarrow}
\newcommand*{\ol}{\overline}
\newtheorem{theorem}{Theorem}[section]
\newtheorem{lemma}[theorem]{Lemma}
\newtheorem{proposition}[theorem]{Proposition}
\def\pp{{\mathfrak{p}}}
\def\deg{{\text{deg}}}
\newtheorem{definition}{Definition}
\newtheorem{remark}[theorem]{Remark}
\newtheorem{prop}[theorem]{Proposition}
\author{Fateme Sajadi}
\title{A Unified Finiteness Theorem For Curves Over Function Fields }
\date{}
\begin{document}
\maketitle
\begin{abstract}
Motivated by the analogy between number fields and function fields, this paper extends the main result of \cite{janbazi2025unified} to the function field setting. Let $C$ be a smooth affine curve over a finite field, and let $\pi: S \rightarrow C$ be a smooth, proper model of a curve over $C$. Then, for any fixed integer $n \in \mathbb{N}$, there are only finitely many horizontal divisors of degree $n$ that are étale over the base $C$, up to the action of the automorphism group and Frobenius (in the isotrivial case).
\end{abstract}

\pagenumbering{arabic}

\section{Introduction}\label{intro}

In \cite{janbazi2025unified}, we presented a result on integral points on curves which unified several fundamental theorems in arithmetic geometry, notably, Faltings' Theorem, Birch-Merriman Theorem, and Siegel's Theorem. In this  work, we seek to generalize and build upon this framework in the context of function fields.
One well-known complication that arises in the function field setting, and which does not occur in the number field case, is the issue of (iso)trivial curves. This refers to the possibility that the family does not vary in moduli. Such behavior is already apparent in the failure of the function field analogue of the Mordell conjecture to hold in general. For example, the surface $C \times C$ admits infinitely many sections over $C$, given by the graphs of iterates of the Frobenius morphism. In this work, we formulate and prove analogues of the results in \cite{janbazi2025unified} that address these complications.

Let $C$ be a smooth affine curve over the finite field $\F_q$, and $\pi: S \rightarrow C$ be a smooth and proper morphism whose generic fiber is a smooth projective curve of genus $g$. Let $C^0$ be the set of closed points of $C$, and for any $\pp \in C^0$, let $S_\pp$ be the fiber of $S$ at $\pp$. We denote by $\Aut_C(S)$ the group of automorphisms of $S$ over the base $C$.

\begin{definition} 
We define the set $\Omega_{n,C}(S/C)$ as follows:
\begin{equation*}
    \Omega_{n,C}(S/C) = \left\{ A \subset S(C) :  
    \begin{array}{ll} 
        &\#A = n \\  
         
        &\#\, S_\pp \cap A = n \quad \,\forall \mathfrak{p} \in C^0  
    \end{array} 
    \right\}.
\end{equation*}

Also, let $\Omega_n(S/C)$ denote the set of horizontal divisors of degree $n$ on the surface $S$ that are \'etale over the base $C$; that is,
\[
\Omega_n(S/C)=\{D\subset S\mid D\to C \textrm{ is \'etale of degree } n\}.
\]
\end{definition}

Note that each element of $\Omega_{n,C}(S/C)$ corresponds to a horizontal divisor of degree $n$ that is étale over $C$. We emphasize that such a divisor arises from a collection of $n$ disjoint sections from $C$ to $S$. In particular, we have an inclusion $\Omega_{n,C}(S/C)\subset\Omega_n(S/C)$.

When the fibered surface $S$ is trivial over $C$, the relative Frobenius morphism $F_{S/C}$ acts on $S$ itself and hence induces an action on $\Omega_n(S/C)$. This action can generate infinitely many horizontal divisors that are not equivalent under automorphisms in $\Aut_C(S)$. For instance, let $D$ be a smooth projective curve defined over $\F_q$ and suppose there is a non-constant morphism $t: C \rightarrow D$. Define $S := D \times C$, so that $S$ is a trivial family over $C$. Then, for each $i \in \mathbb{N}$, the graph of the composition $F_D^i \circ t : C \rightarrow D$ defines a section from $C$ to $S$, which corresponds to an element of $\Omega_1(S/C)$. If we further assume that the genus of $D$ is at least $2$, then the automorphism group $\Aut_C(S)$ is finite, and so these sections define infinitely many distinct orbits under the action of $\Aut_C(S)$. Therefore, when defining an equivalence relation on $\Omega_n(S/C)$, we must account for the action of the relative Frobenius.

We now define the equivalence relation on $\Omega_n(S/C)$, depending on the geometry of $S$.

\begin{definition}\label{def equivalence}
 Let \(A, B \in \Omega_{n}(S/C)\). If $S\to C$ is:

\begin{description}[align=left, labelsep=0.5em, labelwidth=*, leftmargin=0pt]
    \item[trivial:]
    Then, the relative Frobenius $F := F_{S/C}$ is a $C$-morphism from $S$ to itself, and we define the equivalence:
    \begin{equation*}
        A \sim B \iff \exists \, r, s \in \mathbb{Z}_{\geq 0}, \, \psi \in \Aut_C(S) 
        \quad \psi \cdot F^r A = F^s B
    \end{equation*}

    \item[isotrivial:]   
    Here, \(A \sim B\) if and only if there exist integers \(r, s \in \mathbb{Z}_{\ge 0}\) and a \(C\)-isomorphism 
    $\psi: S^{(q^r)} \to S^{(q^s)}$ such that
    \begin{equation*}
        \psi \cdot F_{S/C}^{r} A = F_{S/C}^{s} B
    \end{equation*}
    
    \item[Non-isotrivial:]
    Then, the equivalence is defined by
    \begin{equation*}
        A \sim B \iff \exists\, \psi \in \Aut_C(S) \quad \psi \cdot A = B
    \end{equation*}   
\end{description}
\end{definition}

Further details about the relative Frobenius morphism and its properties can be found in Section~\ref{background}, and a more in-depth discussion of the geometry of the morphism 
$S\ra C$ is provided in Section~4. Now we can state our theorems.

\begin{theorem}\label{section theo}
    The set $\Omega_{n,C}(S/C)/\sim$ is finite.
\end{theorem}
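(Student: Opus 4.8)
The plan is to pass to the generic fiber and recast the statement as a Diophantine finiteness with an integrality constraint. Write $K=\F_q(C)$ and let $X=S_\eta$ be the generic fiber, a smooth projective curve of genus $g$ over $K$; a section in $S(C)$ is the same as a point of $X(K)$, and an element $A\in\Omega_{n,C}(S/C)$ is an unordered set of $n$ points $P_1,\dots,P_n\in X(K)$ that remain pairwise distinct in every fiber $S_\pp$. The hypothesis ``pairwise distinct over every $\pp\in C^0$'' is precisely an integrality condition: over the affine base $C$ the sections never collide, so collisions are confined to the finitely many boundary points of the smooth completion of $C$. My strategy is to organize the argument according to the geometry of $X/K$—whether $S\to C$ is non-isotrivial, isotrivial, or trivial—and within each case according to $g$. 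Because each $A$ is a disjoint union of $n$ sections, the associated étale cover $\coprod_i s_i(C)\to C$ is \emph{split}, so, unlike for the larger set $\Omega_n(S/C)$, there is no contribution from varying the abstract cover, and the entire content lies in bounding the embeddings into $S$.

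In the non-isotrivial case (necessarily $g\ge 1$) I would invoke the function field analogues of the classical finiteness theorems directly. For $g\ge 2$ the function field Mordell conjecture (Grauert--Manin--Samuel) gives that $X(K)$ is finite and that $\Aut_C(S)$ is finite, so $\Omega_{n,C}(S/C)\subset\binom{X(K)}{n}$ is finite and its quotient is finite a fortiori. For $g=1$ I use that $\Aut_C(S)$ contains the translations by $X(K)$ (here $\pi$ smooth proper with a section makes $S\to C$ an elliptic curve, so translations are genuine $C$-automorphisms) to normalize one section to the origin $O$; disjointness then exhibits the remaining sections as integral points of the hyperbolic affine curve $X\setminus\{O\}$, and the function field Siegel theorem, valid in the non-isotrivial case, yields finiteness. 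In both subcases the equivalence is by $\Aut_C(S)$ alone, matching Definition~\ref{def equivalence}.

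For the isotrivial and trivial families the Frobenius enters and the real work begins. After a finite separable base change $C'\to C$ the family becomes trivial, $S_{C'}\cong X_0\times_{\F_q}C'$, a section becomes a morphism $C'\to X_0$ with descent data, and $F_{S/C}$ acts by raising such a morphism to the $q$-power. For $g=0$ the fiber is a form of $\mathbb{P}^1$; normalizing three sections by $\Aut_C(S)\supseteq\PGL_2$ turns disjointness into the statement that the relevant cross-ratios are $S$-units, and the function field $S$-unit equation—the Birch--Merriman analogue—gives finiteness up to $\PGL_2$ and Frobenius. For constant $g=1$, translations together with the isotrivial form of Siegel's theorem give finiteness up to Frobenius. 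The genuinely new point is $g\ge2$: here I apply the positive-characteristic de Franchis theorem, so there are finitely many non-constant morphisms $C'\to X_0$ up to composition with powers of Frobenius, while constant sections contribute only the finite set $X_0(\F_q)$. The subtlety is that $\sim$ applies a single uniform power $F^r$ to the whole configuration, so I cannot normalize each section independently; the crucial step is to use disjointness to \emph{synchronize} the powers, then normalize the minimal power to zero via the global $F$ and bound the rest. Finally I would descend from $C'$ back to $C$ using Galois invariance of the equivalence, which preserves finiteness.

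The main obstacle I anticipate is exactly this synchronization in the isotrivial positive-characteristic regime: proving quantitatively that disjointness over all of $C^0$ forces a uniform bound on the relative Frobenius powers appearing among the $n$ sections. This is the phenomenon already visible in the $C\times C$ example—two sections related by a large power $F^b$ must collide over some closed point of $C$, since the intersection number of the graph of $F^b\phi$ with that of $\psi$ grows like $q^b$ and eventually exceeds what disjointness over $C^0$ permits—so the étale hypothesis over the \emph{affine} base is precisely what rules out the infinite Frobenius families and collapses the configurations into finitely many orbits. Establishing the requisite intersection/height estimate is the technical heart of the theorem.
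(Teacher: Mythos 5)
Your plan tracks the paper's proof in its essentials: the same trichotomy (non-isotrivial; isotrivial reduced to trivial by a finite cover; trivial), the same inputs in the non-isotrivial case (Samuel's Theorem \ref{samuel theo} for $g\ge 2$; translations extended via Theorem \ref{ext aut} together with Voloch's Siegel-type Theorem \ref{siegel like} for $g=1$), and the same mechanism in the trivial case: factor every morphism $C\to D$ as $F_D^r\circ t$ with $t$ separable (Lemma \ref{maps form}), bound $\deg t$ by Riemann--Hurwitz, invoke finiteness of separable maps of bounded degree (Lemma \ref{f. m. sep map}, your ``de Franchis'' step), normalize by translations or by $\PGL_2$, and synchronize the Frobenius exponents. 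Your synchronization heuristic --- the graph of $F_D^b\circ\phi$ meets the graph of $\psi$ in roughly $q^b$ points, while disjointness over the affine base confines all intersection to $\overline{C}\setminus C$ --- is precisely the paper's Proposition \ref{frob control}, proved from Lemma \ref{bounded intersection of graph lemma} plus a local power-series bound at the boundary points; your $g=0$ unit-equation formulation is an equivalent packaging of the paper's Riemann--Hurwitz bound for maps avoiding three sections.

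The one genuine gap is the sentence ``Finally I would descend from $C'$ back to $C$ using Galois invariance of the equivalence, which preserves finiteness.'' This is not automatic, and it is where the paper invests Sections~4 and~5, culminating in Proposition \ref{finite fiber under finite map}. The equivalence over $C$ is coarser in group-theoretic terms than the one over $C'$: it uses the smaller group $\Aut_C(S)$ (and, in the isotrivial case, $C$-isomorphisms between distinct twists $S^{(q^r)}\to S^{(q^s)}$, whose very existence must be established), so a single equivalence class over $C'$ could a priori split into \emph{infinitely many} classes over $C$. The paper rules this out by injecting the set of $C$-classes inside a fixed $C'$-class into the nonabelian cohomology set $H^1(G,M_A)$ of the stabilizer (Lemma \ref{injective of multi orbit to coh}) and proving that this set is finite, with a bound uniform in $A$ and across all Frobenius twists (Proposition \ref{finite cohomology of stab}); in the isotrivial case it additionally needs the tower $\{S^{(q^m)}\}$ to be preperiodic over $C$ (Theorem \ref{preperiodic}, which for $g=0$ rests on the finiteness of conic bundles, Theorem \ref{f.m. conic bundle}). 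The same descent is also what justifies assuming the base has trivial Picard group, which your normalizations silently require: by Lemma \ref{annoying case}, over a base with nontrivial class group $\PGL_2(R)$ is \emph{not} transitive on sections of $\mathbb{P}^1_R$, so sending three disjoint sections to $[0:1],[1:0],[1:1]$ is not possible in general. As written, your argument establishes finiteness over $C'$, not over $C$; supplying this cohomological descent is a substantial, not routine, part of the proof.
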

More generally, we prove:
\begin{theorem}\label{etale theo}
    The set $\Omega_n(S/C)/\sim$ is finite.
\end{theorem}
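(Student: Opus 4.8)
The plan is to pass to the generic fibre and reinterpret $\Omega_n(S/C)$ as a set of degree-$n$ points on the curve $X := S_\eta$ over $K := \F_q(C)$, and then to organise the argument around the already-established section case, Theorem \ref{section theo}. Concretely, an element $D \in \Omega_n(S/C)$ is the same datum as a reduced separable divisor of degree $n$ on $X$ unramified over $C$; equivalently, $D \to C$ is itself a degree-$n$ étale cover (so $D$ is normal, the normalisation of $C$ in $L := \F_q(D)$), and since $S \to C$ is proper and $C$ regular the divisor extends automatically to the induced section of $S_D := S \times_C D \to D$. Passing to the Galois closure $\widetilde D \to C$ splits $D$ into $n$ disjoint sections of $S_{\widetilde D} \to \widetilde D$, realising it as an element of $\Omega_{n,\widetilde D}(S_{\widetilde D}/\widetilde D)$ on which $\Gal(\widetilde D/C)$ acts by permutation. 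First I would record that étaleness and (non-/iso-)triviality are preserved under these finite étale base changes, so that Theorem \ref{section theo} applies to each family $S_D \to D$.

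If the set of covers $D \to C$ that occur were finite, the theorem would follow at once: for each of the finitely many covers Theorem \ref{section theo} gives finitely many divisors modulo $\sim$, and one would only need to check that the equivalence on $\Omega_n(S/C)$ descends compatibly (tracking $\Aut_C(S)$ inside the larger $\Aut_D(S_D)$ available after base change, the Galois action, and the relative Frobenius, which matches because $F_{S/C}$ pulls back to $F_{S_D/D}$). The genuinely new difficulty relative to the number-field setting of \cite{janbazi2025unified}, and the step I expect to be the main obstacle, is precisely that over $\F_q$ there are infinitely many degree-$n$ étale covers of $C$: in characteristic $p$ the wild (e.g.\ Artin--Schreier) covers ramified only at the punctures of $\overline C \setminus C$ form an infinite family, so the naive ``finitely many covers'' reduction breaks down.

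To overcome this I would bound the covers by bounding a height. Fixing a projective model $\overline S$ and the fibre class $F$, every $D$ satisfies $D \cdot F = n$, so by adjunction its arithmetic genus is controlled by the self-intersection $D \cdot D$; on the other hand Riemann--Hurwitz for $D \to C$ shows the genus of $D$ grows with the conductor of $L/K$ at the punctures, i.e.\ with the wildness of the cover. Hence a bound on the height $D \cdot D$ bounds the conductor, leaving only finitely many covers, after which Northcott's property over the finite-constant-field $K$ (bounded degree and bounded height $\Rightarrow$ finitely many points) finishes the count. The required height bound is supplied, case by case in the genus $g$ of $X$, by the same function-field Diophantine input underlying Theorem \ref{section theo}: Manin--Grauert/de Franchis in the non-isotrivial $g \ge 2$ case; Lang--Néron together with the translation action of the Mordell--Weil group in the $g = 1$ case; and the $S$-unit/Mason--Stothers theorem in the $g = 0$ case.

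The one family of divisors whose height resists bounding is the Frobenius orbit of a fixed one in the (iso)trivial case---visible already in the example $F_D^i \circ t$ of the introduction, where the cover stays tame but the height grows like $q^i$---and these are exactly the divisors that the relative Frobenius in Definition \ref{def equivalence} is designed to identify. Thus I would split the argument: in the non-isotrivial case the height is bounded after quotienting by $\Aut_C(S)$ (which in genus $1$ already contains the infinite group of translations by Mordell--Weil sections), and Northcott gives finiteness of the quotient; in the isotrivial and trivial cases one must in addition factor out $F_{S/C}$ to reduce to bounded height before applying Northcott, with the quotients by $\Aut_C(S)$ and $\Gal$ absorbing the residual finite ambiguity. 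Beyond the height bound itself, the main technical burden is the bookkeeping flagged above: showing that the \emph{finer} equivalence on $\Omega_n(S/C)$, using $\Aut_C(S)$ over the base rather than the larger $\Aut_D(S_D)$ gained after base change, still produces only finitely many classes---which is where the Galois descent of the splitting into sections must be combined with the Frobenius identification.
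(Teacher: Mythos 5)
Your overall architecture---split every \'etale divisor into sections after a finite Galois base change, descend back to $C$ while tracking $\Aut_C(S)$, the Galois action and Frobenius, and feed in genus-by-genus Diophantine input---is the same as the paper's. The divergence is exactly at the step you single out as the main obstacle. You assert that the ``finitely many covers'' reduction must break down in characteristic $p$ (wild Artin--Schreier covers), and you substitute a height bound ($D\cdot D$ via adjunction, conductor control, Northcott, with Frobenius factored out in the (iso)trivial case). The paper does the opposite: Proposition \ref{prop that div becomes sections} claims the covers that occur are finite in number, with no heights at all. Its argument is that each irreducible component $D_i$ of $A\in\Omega_n(S/C)$ is \'etale over $C$ of degree $a_i\le n$, that Riemann--Hurwitz then gives $\chi(D_i)=a_i\chi(C)$, so $g_{D_i}$ is bounded; over a finite field there are finitely many curves of bounded genus, and by Lemma \ref{f. m. sep map} finitely many separable maps of bounded degree to $\ol C$; the compositum and Galois closure of this finite list is the splitting cover $E$. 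The descent bookkeeping you flag as the ``main technical burden'' is then carried out by nonabelian cohomology (finiteness of $H^1(G,M_A)$, Proposition \ref{finite cohomology of stab}), the preperiodicity of the Frobenius twists $S^{(q^m)}$ (Theorem \ref{preperiodic}), and a pigeonhole argument (Proposition \ref{finite fiber under finite map}); Northcott never appears.

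The genuine gap in your proposal is that your substitute step cannot be carried out. Take $S=\P^1\times\A^1\to C=\A^1$, $n=p$, and the Artin--Schreier divisors $Z_f=\{y^p-y=f(x)\}$, which are precisely the covers you invoke: they lie in $\Omega_p(S/C)$ and have genus $(p-1)(\deg f-1)/2$, hence unbounded height. Factoring out Frobenius does not bring them into a bounded range: the relative Frobenius sends $Z_f$ to $Z_{f^q}$, and $Z_{f^q}$ is already equivalent to $Z_f$ under $\Aut_C(S)=\PGL_2(\F_q[x])$ (translate $y$ by $g=f+f^p+\cdots+f^{q/p}$, so that $f^q=\wp(g)+f$), so the Frobenius quotient collapses nothing beyond what $\Aut_C(S)$ already does, and the geometric genus of a divisor---an invariant of the equivalence relation---still takes infinitely many values across $\Omega_p(S/C)$. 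Hence ``bounded height after factoring out $F_{S/C}$, then Northcott'' is false in the trivial $g=0$ case, and your plan has no route to finiteness exactly where you located the difficulty. In fairness, your objection is in direct tension with the paper's own Proposition \ref{prop that div becomes sections}: the equality $\chi(D_i)=a_i\chi(C)$ invoked there is the tame Riemann--Hurwitz count and fails for covers with wild ramification at the punctures, i.e., for these same $Z_f$, so the phenomenon you point to is real and must be confronted (by a Swan-conductor/tameness analysis of which covers can occur, or by strengthening the hypotheses on $\Omega_n(S/C)$); but a height bound modulo Frobenius is demonstrably not the tool that closes it.
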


\subsection{Methods of Proof}
The idea is to show that the proof of Theorem \ref{etale theo} reduces to Theorem \ref{section theo}. The reduction proceeds as follows. We first establish that if Theorem \ref{etale theo} holds for a base change $S \times_C C' \to C'$ along a finite generically Galois morphism $C' \to C$, then it also holds for $S \to C$. This descent argument enables us to replace $C$ with a convenient extension when needed.

In particular, we show that there exists a finite generically Galois extension $E \to C$ such that every divisor in the set $\Omega_n(S/C)$ pulls back to a union of sections over $E$. Furthermore, for isotrivial families, we show that there exists a finite generically Galois cover $C' \to C$ such that the base change $S \times_C C' \to C'$ becomes trivial.

Finally, it remains to verify Theorem \ref{section theo} in both the trivial and non-isotrivial settings. In the non-isotrivial case, the proof follows a similar strategy to \cite{janbazi2025unified}, while in the trivial case we analyze morphisms between curves and control the behavior of Frobenius.
The special case where the fibers have genus $g=0$ and $n=1$ is straightforward and will be treated separately.  We omit it from the main body of the proof for clarity.

In order to capitalize on the equivalence relation in the isotrivial case, we need to demonstrate that there are many $C$-isomorphisms from $S^{(q^r)}\ra S^{(q^s)}$. We establish this by showing that the sequence of Frobenius twists of $S$ as illustrated in the following commutative diagram:
\[
\begin{tikzcd}
\dots \arrow[r]         & S^{(q^n)} \arrow[r] \arrow[d] & \dots \arrow[r]         & S^{(q^2)} \arrow[d] \arrow[r] & S^{(q)} \arrow[r] \arrow[d] & S \arrow[d] \\
\dots \arrow[r, "F_C"'] & C \arrow[r, "F_C"']           & \dots \arrow[r, "F_C"'] & C \arrow[r, "F_C"']           & C \arrow[r, "F_C"']         & C          
\end{tikzcd}
\]
forms a preperiodic tower of schemes over $C$. In particular, we show:
\begin{theorem}\label{preperiodic}
If the family $S\ra C$ is isotrivial, then  $\{S^{(q^n)}\ra C\}_{n\ge 0}$ is a preperiodic family of schemes over $C$; that is, there exist integers $d,N\in \mathbb{N}$ such that for any $m,n\ge N$, if $d$ divides $m-n$, then $S^{(q^m)}$ and $S^{(q^n)}$ are isomorphic  over $C$.
\end{theorem}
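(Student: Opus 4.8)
The plan is to reduce the claim to the elementary fact that a self-map of a finite set has eventually periodic forward orbits. Concretely, I would show that the isomorphism classes $[S^{(q^n)}]$, as $n$ ranges over $\mathbb{N}$, all lie in a single finite set $\mathcal{S}$, and that the Frobenius-twist operation $T\mapsto T^{(q)}$ defines a self-map $\Phi\colon\mathcal{S}\to\mathcal{S}$ satisfying $S^{(q^{n+1})}=(S^{(q^n)})^{(q)}$. Granting these, the sequence $\{[S^{(q^n)}]\}_{n\ge 0}$ is the forward $\Phi$-orbit of $[S]$; since $\mathcal{S}$ is finite, this orbit is eventually periodic, so there exist $N,d\in\mathbb{N}$ with $[S^{(q^{n+d})}]=[S^{(q^n)}]$ for all $n\ge N$. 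For $m,n\ge N$ with $d\mid m-n$ this yields $S^{(q^m)}\cong S^{(q^n)}$ over $C$, which is exactly preperiodicity.

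The finiteness of $\mathcal{S}$ rests on two inputs. First, the geometric fibers of the $S^{(q^n)}$ are periodic: writing $F$ for the geometric generic fiber, the fiber of $S^{(q^n)}$ is the Frobenius twist $F^{(q^n)}$, and since $F$ is a curve over $\overline{\mathbb{F}}_q$ it is already defined over some finite field $\mathbb{F}_{q^k}$, whence $F^{(q^{n+k})}\cong F^{(q^n)}$ for all $n$. Thus at most $k$ isomorphism classes of fibers occur, and each has automorphism group scheme isomorphic to $\Gamma:=\Aut(F)$. Second, for a fixed geometric fiber, the isotrivial families over $C$ with that fiber are the forms of the constant family that are trivialized by a finite étale cover; assuming $g\ge 2$ so that $\Gamma$ is finite, these are classified by the pointed set $H^1_{\mathrm{et}}(C,\underline{\Gamma})$, equivalently by conjugacy classes of continuous homomorphisms $\pi_1(C)\to\Gamma$. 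Because $C$ is a smooth curve over a finite field, $\pi_1(C)$ is topologically finitely generated, so this set is finite. Taking $\mathcal{S}=\bigsqcup_{j=0}^{k-1}H^1_{\mathrm{et}}(C,\underline{\Aut(F^{(q^j)})})$ then gives a finite set containing every $[S^{(q^n)}]$.

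I expect the main obstacle to be the second input: making precise that every $S^{(q^n)}$ is an everywhere-unramified form of its constant model, so that such forms are governed by the finite cohomology set $H^1_{\mathrm{et}}(C,\underline{\Gamma})$ rather than by the a priori infinite Galois cohomology of the generic point. This uses the smooth, proper structure of the family over all of $C$ together with the finiteness of $\Aut(F)$, which is where the hypothesis $g\ge 2$ enters; the cases $g\le 1$ (in particular the $g=0$, $n=1$ case noted above) are degenerate and are handled separately. A secondary point to verify is that $\Phi$ is well defined on isomorphism classes and restricts to a self-map of $\mathcal{S}$, which follows from functoriality of the Frobenius twist together with the fiberwise periodicity established above.
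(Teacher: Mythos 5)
Your overall skeleton (finitely many isomorphism classes $+$ functoriality of twisting $\Rightarrow$ eventual periodicity by pigeonhole) matches the paper's, but the finiteness input you rely on is false, and this is a genuine gap. You claim that isotrivial families with fixed geometric fiber $F$ are classified by $H^1_{\text{\'et}}(C,\underline{\Aut(F)})$, and that this set is finite because ``$\pi_1(C)$ is topologically finitely generated.'' For a smooth \emph{affine} curve $C$ in characteristic $p$ this fails: wild ramification produces infinitely many $\Z/p\Z$-covers. Concretely, by Artin--Schreier theory $H^1_{\text{\'et}}(\mathbb{A}^1_{\F_q},\Z/p\Z)\cong \F_q[t]/\wp(\F_q[t])$ with $\wp(f)=f^p-f$, which is infinite dimensional over $\F_p$; since each fiber of $H^1_{\text{\'et}}(C,\Z/p\Z)\to H^1_{\text{\'et}}(C,\Gamma)$ has cardinality at most $[\Gamma:\Z/p\Z]$, the set $H^1_{\text{\'et}}(C,\Gamma)$ is infinite whenever $p$ divides $|\Gamma|$ --- and there do exist curves of genus $\ge 2$ in characteristic $p$ with automorphisms of order $p$. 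So your set $\mathcal{S}$ need not be finite, and the pigeonhole argument collapses exactly in the wild case, which is unavoidable in the setting of this paper.

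The repair is the point your proposal misses and the paper exploits: one does not need finiteness of the full \'etale $H^1$ over $C$, only finiteness of the subset of classes split by a \emph{single fixed} trivializing cover. The paper takes the finite generically Galois cover $C'\to C$ (with group $G$, field of definition $\F_{q^k}$) over which $S$ becomes $D\times C'$, and shows via the compatibility of relative Frobenius with base change that \emph{every} twist $S^{(q^{ki})}$ is trivialized by this same $C'$; hence all of them define classes in the Galois cohomology set $H^1(G,\Aut_{C'}(D\times C'))$, which is finite because $G$ is a fixed finite group --- immediately when $g\ge 2$ (finite automorphism group), via the Lang--N\'eron/Mordell--Weil theorem and the twisted cohomology exact sequence when $g=1$, and via the separate conic-bundle finiteness theorem (Brauer groups and Azumaya algebras) when $g=0$. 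Relatedly, your dismissal of $g\le 1$ as ``degenerate'' is inaccurate: in the paper those cases carry real content (Theorem \ref{f.m. conic bundle} for $g=0$, finiteness of $H^1$ of the Mordell--Weil group for $g=1$) and are not shortcuts around the argument.
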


An important step in the proof of this theorem is the following result.

\begin{theorem}\label{f.m. conic bundle}
    There are finitely many conic bundles over $C$ up to isomorphism over $C$.
\end{theorem}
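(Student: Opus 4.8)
The plan is to identify the set of conic bundles over $C$, up to $C$-isomorphism, with the non-abelian cohomology set $H^1_{\mathrm{et}}(C,\PGL_2)$. Here I regard a conic bundle as a smooth proper genus-$0$ fibration, i.e. a form of $\PP^1_C$ in the étale topology; since $\Aut(\PP^1)=\PGL_2$, isomorphism classes of such forms are classified by $H^1_{\mathrm{et}}(C,\PGL_2)$. The short exact sequence $1\to \G_m\to \GL_2\to \PGL_2\to 1$ then yields an exact sequence of pointed sets
\[
H^1(C,\GL_2)\longrightarrow H^1(C,\PGL_2)\xrightarrow{\ \delta\ } H^2(C,\G_m)=\mathrm{Br}(C),
\]
so it suffices to prove (i) that the image of $\delta$ is finite, and (ii) that each nonempty fibre of $\delta$ is finite, since a map with finite image and finite fibres has finite domain.

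For (i) I would first observe that the image of $\delta$ lands in the $2$-torsion $\mathrm{Br}(C)[2]$, because the Brauer class attached to a $\PGL_2$-torsor is that of a quaternion Azumaya algebra and hence has period dividing $2$. To bound $\mathrm{Br}(C)[2]$ I would invoke class field theory for the global function field $K=\F_q(C)$. Writing $\overline{C}$ for the smooth projective model and $\Sigma=\overline{C}\setminus C$, one has $\mathrm{Br}(\overline{C})=0$ for a smooth projective curve over a finite field, so the localization (residue) sequence gives an injection
\[
\mathrm{Br}(C)\hookrightarrow \bigoplus_{P\in\Sigma} H^1(\kappa(P),\Q/\Z)=\bigoplus_{P\in\Sigma}\Q/\Z ,
\]
whose image is cut out by the reciprocity relation $\sum_P \mathrm{inv}_P=0$. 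Passing to $2$-torsion, $\mathrm{Br}(C)[2]$ injects into $(\Z/2)^{\Sigma}$ and is therefore finite.

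For (ii), the fibre over the trivial class is the image of $H^1(C,\GL_2)$, i.e. the set of $\PP^1$-bundles $\PP(E)$. Since $C$ is affine its coordinate ring is a Dedekind domain, so by the structure theorem for projective modules every rank-$2$ bundle is $\O\oplus\det E$, whence $H^1(C,\GL_2)\cong\mathrm{Pic}(C)$ via the determinant; because $\PP(E)\cong\PP(E\otimes L)$ and $\det(E\otimes L)=\det E\otimes L^{2}$, this fibre is a quotient of $\mathrm{Pic}(C)/2\,\mathrm{Pic}(C)$. The Picard group of a smooth affine curve over a finite field is finite (it is a quotient of $\mathrm{Pic}(\overline{C})$, whose degree-zero part $\Jac(\overline{C})(\F_q)$ is finite and whose free part dies once a point of $\Sigma$ is removed), so the split fibre is finite. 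For a nonzero class $\alpha\in\mathrm{Br}(C)[2]$, the standard twisting argument identifies $\delta^{-1}(\alpha)$ with the image of $H^1(C,\GL_1(\mathcal A_\alpha))$, where $\mathcal A_\alpha$ is the quaternion Azumaya algebra of class $\alpha$; this set is the class set of locally free rank-one $\mathcal A_\alpha$-modules.

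The main obstacle is precisely this last point: proving finiteness of the class set of the quaternion order $\mathcal A_\alpha$ over $C$. I expect to deduce it from finiteness of class numbers for the inner form $\GL_1(\mathcal A_\alpha)$ of $\GL_2$ over $K$; concretely, realizing the class set as a double coset space $\GL_1(\mathcal A_\alpha)(K)\backslash \GL_1(\mathcal A_\alpha)(\A_K)/U$ for a suitable open compact $U$, its finiteness follows from the adelic finiteness theorem for class numbers of reductive groups over global function fields, which is the geometric analogue of finiteness of the ideal class group now applied to the order $\mathcal A_\alpha$. Combining (i) and (ii), the map $\delta$ has finite image and finite fibres, so $H^1(C,\PGL_2)$ is finite, giving the assertion.
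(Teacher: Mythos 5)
Your proposal is correct, and its skeleton coincides with the paper's: both classify conic bundles by $H^1_{\text{\'et}}(C,\PGL_2)$, feed in the sequence $1\to\G_m\to\GL_2\to\PGL_2\to 1$, and prove finiteness by showing the boundary map $\delta$ has finite image (landing in the $2$-torsion of $H^2_{\text{\'et}}(C,\G_m)$, which injects into a finite direct sum of local Brauer groups at the points of $\ol{C}\setminus C$ --- your residue-sequence argument and the paper's appeal to Milne's duality injection are the same fact) and finite fibres. The genuine divergence is in the fibre-finiteness step. The paper does not use the twisting formalism there: it proves that two rank-$4$ Azumaya algebras with the same class in $H^2_{\text{\'et}}(C,\G_m)$ are Zariski-locally isomorphic, via the conjugacy theorem for Azumaya orders over a local PID (Theorem~\ref{azumaya over local ring}), so each fibre of $\delta$ injects into $H^1_{\text{Zar}}(C,G)$ with $G$ the automorphism sheaf of one fixed Azumaya algebra; a localization/flasque-resolution argument then identifies this Zariski cohomology set with the class set $G(K)\backslash\bigoplus_v G(K_v)/G(\O_v)$, finite by Conrad's theorem. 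You instead split off the trivial fibre, handled by the structure of rank-$2$ bundles over a Dedekind domain and finiteness of $\mathrm{Pic}(C)$ (matching the paper's Section~\ref{Proj space pver dede} discussion), and handle a nontrivial fibre $\delta^{-1}(\alpha)$ by twisting (exactly the formalism of Theorem~\ref{coh theo}, which the paper develops but uses elsewhere), identifying it with the class set of locally free rank-one $\mathcal{A}_\alpha$-modules and invoking adelic class-number finiteness for $\GL_1(\mathcal{A}_\alpha)$. Both routes bottom out in the same deep input --- finiteness of class sets over global function fields, i.e.\ \cite[Thm 1.3.1]{conrad2012finiteness} --- so neither is more elementary; yours trades the order-conjugacy lemma and the flasque resolution for the twisting formalism plus one identification that you should make explicit to close the argument: that $H^1_{\text{\'et}}(C,\GL_1(\mathcal{A}_\alpha))$ really is the class set of invertible $\mathcal{A}_\alpha$-modules, which requires generic triviality (Hilbert 90 for $\GL_1$ of a central simple algebra over $K$) and local triviality (invertible modules over an Azumaya algebra over a local ring are free). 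With that point supplied, your argument is complete and is a legitimate alternative to the paper's.
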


\subsection{Related Results}
In this section, we highlight several foundational results concerning curves over function fields. These theorems reflect a deep analogy with the arithmetic of function fields and number fields and serve as both conceptual and technical tools for our work.

We begin with a theorem of Samuel, which serves as a function field analogue of Faltings' Theorem.

\begin{theorem}\label{samuel theo}\cite[Thm $4$]{PMIHES_1966__29__55_0}
Let $k$ be an algebraically closed field of characteristic $p$ and $K$ be a function field of transcendence degree $1$ over $k$. Let $X$ be an absolute non-singular curve of genus $g \geq 2$ over $K$. If $X$ is non-isotrivial over $K$, then $\#\, X(K)< \infty$.
\end{theorem}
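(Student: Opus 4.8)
The plan is to convert the statement into the geometry of a fibered surface and run the Grauert--Manin argument, supplying Samuel's refinement to control the positive-characteristic pathologies. Write $K=k(\bar C)$ for a smooth projective curve $\bar C/k$, and spread $X$ out to a relatively minimal smooth projective model $f\colon \mathcal{X}\to \bar C$, a smooth projective surface whose generic fiber is $X/K$. After a finite base change $\bar C'\to \bar C$ I may assume the fibration is semistable; since $K\hookrightarrow K'=k(\bar C')$ gives $X(K)\subseteq X(K')$ and non-isotriviality is preserved, it suffices to treat the semistable family. Under this dictionary a point $P\in X(K)$ is exactly a section $\sigma_P\colon \bar C\to \mathcal{X}$, so the theorem becomes the assertion that a non-isotrivial semistable genus-$g$ fibration with $g\ge 2$ has only finitely many sections.

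Next I would attach a height to each section. Set $h(\sigma):=\deg_{\bar C}\bigl(\sigma^*\omega_{\mathcal{X}/\bar C}\bigr)$, the degree of the pullback of the relative dualizing sheaf. Because $g\ge 2$ the sheaf $\omega_{\mathcal{X}/\bar C}$ is $f$-ample, so $h$ genuinely measures the complexity of $\sigma$ and bounding it confines $\sigma$ to a bounded family. From the normal bundle sequence $0\to T_{\bar C}\to \sigma^*T_{\mathcal{X}}\to N_\sigma\to 0$ one reads off $N_\sigma=\sigma^*T_{\mathcal{X}/\bar C}=\sigma^*\omega_{\mathcal{X}/\bar C}^{-1}$, hence $\deg N_\sigma=-h(\sigma)$.

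The crux is a uniform upper bound $h(\sigma)\le M$ with $M$ depending only on $f$, and here non-isotriviality enters. Non-isotriviality says the moduli map $\bar C\to \overline{M}_g$ is non-constant, equivalently the Kodaira--Spencer class does not vanish; this non-vanishing produces a nonzero global jet differential out of a symmetric power of $\Omega^1_{\mathcal{X}/\bar C}$, which upon restriction to $\sigma$ yields an Arakelov-type canonical class inequality of the shape $h(\sigma)\le c_1\,(2g_{\bar C}-2)+c_2\deg\Delta$, where $\Delta$ is the discriminant of bad fibers and $c_1,c_2$ are fixed. Granting the bound, finiteness is formal: the sections with $h(\sigma)\le M$ form a quasi-compact subscheme of $\mathrm{Hom}_{\bar C}(\bar C,\mathcal{X})$, and $\deg N_\sigma=-h(\sigma)<0$ forces $H^0(\bar C,N_\sigma)=0$, so each such section is infinitesimally rigid, hence isolated; finitely many isolated points in a quasi-compact finite-type scheme are finite in number.

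The main obstacle is establishing the height bound in characteristic $p$. The naive differentiation underlying the Kodaira--Spencer/jet argument can be annihilated by inseparability: a section may factor through a relative Frobenius, so that $d\sigma$ vanishes and the jet pairing degenerates --- precisely the mechanism behind the isotrivial counterexamples built from Frobenius twists of constant sections. Samuel's contribution beyond Grauert is to neutralize this: one peels off the inseparable part of $\sigma$ by descending along the iterated relative Frobenius $\mathcal{X}\to\mathcal{X}^{(p)}\to\cdots$ until the section becomes generically separable relative to the moduli map, then checks that non-isotriviality still forces the inequality on the descended section and that the Frobenius descent changes the height in a controlled way. Carrying out this descent while preserving the genus and the non-isotriviality hypotheses is the technical heart of the proof.
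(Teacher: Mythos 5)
You should first note that the paper does not prove this statement at all: it is quoted verbatim from Samuel \cite[Thm 4]{PMIHES_1966__29__55_0} as an external input (used later in the non-isotrivial, $g\ge 2$ case of Theorem \ref{section theo}), so there is no internal proof to compare against; the only question is whether your sketch would stand on its own as a proof of Samuel's theorem.

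As it stands it does not, because the two steps you yourself flag as the crux are asserted rather than proved, and they are exactly where all the difficulty of the characteristic-$p$ statement lives. First, the Arakelov-type inequality $h(\sigma)\le c_1(2g_{\bar C}-2)+c_2\deg\Delta$ is introduced with ``granting the bound''; but in characteristic $p$ this bound is \emph{false} for arbitrary sections without first removing inseparability (a section factoring through a high power of the relative Frobenius has height growing like $p^n$ while the right-hand side is fixed), so the jet/Kodaira--Spencer pairing argument you invoke degenerates precisely in the situation the theorem is about. Your proposed remedy --- peel off the inseparable part by descending $\sigma$ through $\mathcal{X}\to\mathcal{X}^{(p)}\to\cdots$ --- is indeed the right mechanism (and is the substance of Samuel's improvement on Grauert), but you give no argument for any of its required ingredients: that a section with vanishing differential factors as $F^m\circ\tau$ with $\tau$ a section of a Frobenius-descended family, that this descended family is still a semistable genus-$g$ fibration which is non-isotrivial, that $h(\tau)$ is controlled (it should drop by roughly a factor of $p$, which is what makes the induction terminate), and that the process stops after boundedly many steps. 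Without these, the proof has a circular shape: the height bound needs separability, and separability is achieved by a descent whose effect on the height is never established. Second, your finiteness endgame needs strict positivity: rigidity via $H^0(\bar C,N_\sigma)=0$ requires $\deg N_\sigma=-h(\sigma)<0$, but you have not excluded sections with $h(\sigma)=0$; for these the Hom-scheme component could a priori be positive-dimensional, and one must argue separately (e.g., that a moving family of sections of $\omega$-degree $0$ sweeps out $\mathcal{X}$ and forces isotriviality --- this is the actual core of Grauert's method, which your outline replaces by the later Parshin--Arakelov--Szpiro height formalism without supplying its char-$p$ justification). Minor points --- semistable reduction in characteristic $p$, sections of a regular model landing in the smooth locus so that $N_\sigma=\sigma^*\omega_{\mathcal{X}/\bar C}^{-1}$, and finiteness of isolated points of a finite-type Hom scheme --- are fine as used.
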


The next theorem, due to Voloch, resembles Siegel’s theorem for number fields.

\begin{theorem}\label{siegel like}\cite[Thm $5.3$]{voloch1990explicit}
Let $K $ be a function field in one variable over $\F_q$ and $M$ be a finite set of places of $K$. Let $E$ be an elliptic curve over $K$ with a non-constant $j$-invariant. Assume $f\in K(E)$ is a non-constant function defined over $K$. Then
\begin{equation*}
    \#\,\{ x\in E(K) |\, v(f(x))\geq 0 \quad \forall v \notin M\} < \infty.
\end{equation*}
\end{theorem}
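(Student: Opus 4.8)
The plan is to reduce the statement to a bound on the canonical height of the integral points, and then to produce that bound by a characteristic-$p$ Diophantine-approximation argument that uses the non-constancy of $j(E)$ in an essential way. First, since $K$ is finitely generated over the prime field $\F_p$, the Lang--Néron (Mordell--Weil) theorem shows that $E(K)$ is a finitely generated abelian group. Let $\hat h$ be the Néron--Tate canonical height on $E(K)$. Because $j(E)$ is non-constant, $E$ is non-isotrivial, its $K/\F_q$-trace vanishes, and hence $\hat h$ is positive definite on $E(K)\otimes_\Z\R$ modulo torsion. Consequently the set $\{x\in E(K):\hat h(x)\le T\}$ is finite for every $T$, so it suffices to show that the integral points in the statement have uniformly bounded canonical height.

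Next I would introduce local heights. Write $D=(f)_\infty$ for the polar divisor of $f$ on $E$, an effective divisor of degree $d=\deg f$, and decompose the associated height as a sum of Néron local heights $h_{E,D}(x)=\sum_v \lambda_{D,v}(x)\,\deg v$. For each place $v$ the term $\lambda_{D,v}(x)$ measures how $v$-adically close $x$ is to $\mathrm{supp}(D)$, and it agrees with $\max\{0,-v(f(x))\}$ up to a bounded correction that vanishes outside a finite set of places. The $M$-integrality hypothesis, namely $v(f(x))\ge 0$ for all $v\notin M$, says exactly that $x$ stays away from the poles of $f$ outside $M$; therefore $\sum_{v\notin M}\lambda_{D,v}(x)\,\deg v$ is bounded by a constant independent of $x$, and we obtain $h_{E,D}(x)\le C_0+\sum_{v\in M}\lambda_{D,v}(x)\,\deg v$. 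Since $h_{E,D}(x)$ is comparable to $d\,\hat h(x)$ up to a term of lower order in $\hat h(x)$, it remains to control the total proximity $\sum_{v\in M}\lambda_{D,v}(x)\,\deg v$ of $x$ to $\mathrm{supp}(D)$ at the finitely many places of $M$; concretely, it suffices to establish a Roth-type inequality $\sum_{v\in M}\lambda_{D,v}(x)\,\deg v\le \theta\, h_{E,D}(x)+O(1)$ with some fixed $\theta<1$, which together with the previous line bounds $h_{E,D}(x)$, and hence $\hat h(x)$.

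Before proving such an inequality I would untwist Frobenius. In characteristic $p$ the derivation-free approximation bound can fail, precisely because $j(E)$ may lie in $K^p$; if $j(E)=h^p$ then $E$ is a relative Frobenius twist of a curve $E'$ with $j(E')=h$, and the integral points of $E$ correspond to those of $E'$. Iterating, I replace $E$ by the curve obtained after removing the maximal power of Frobenius, so that from now on $j(E)\notin K^p$. Fixing a separating element $t\in K$ and the derivation $\partial=\mathrm d/\mathrm dt$, this normalization guarantees $\partial j(E)\ne 0$, i.e. the Kodaira--Spencer class of the family is nonzero.

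The decisive and hardest step is the Roth-type inequality itself. Arguing by contradiction, suppose there are integral points of unbounded height; by the pigeonhole principle over the finite set $M$ and the finite support of $D$, I extract a subsequence converging $v_0$-adically to a fixed point $P_0\in\mathrm{supp}(D)$ for a single place $v_0\in M$, with proximity a definite fraction of the (unbounded) height. In characteristic zero Roth's theorem for function fields forbids such good approximation of the algebraic point $P_0$; in characteristic $p$ this must be replaced by a derivation argument. Applying $\partial$ to the identity $u=f(x)$ and interpreting it through the Gauss--Manin connection, the non-vanishing $\partial j(E)\ne 0$ produces a nonzero algebraic differential relation satisfied by the approximating points, and a Riemann--Hurwitz / Mason--Stothers degree count on the base curve $B$ then bounds the order of approximation by $2$. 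This yields the required $\theta<1$, contradicts the assumed unboundedness, and completes the proof. The main obstacle is thus exactly this passage: overcoming the failure of naive Diophantine approximation in positive characteristic, which is where the hypothesis that $j(E)$ is non-constant --- through $\partial j(E)\ne 0$ after Frobenius untwisting --- becomes indispensable.
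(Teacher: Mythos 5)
The paper itself gives no proof of this statement: it is imported verbatim, with citation, from Voloch \cite[Thm 5.3]{voloch1990explicit}, so the only meaningful comparison is with Voloch's argument. Your preliminary reductions are sound and standard: finite generation of $E(K)$, positivity of $\hat h$ off torsion in the non-isotrivial case, Northcott, the decomposition of $h_{E,D}$ into N\'eron local heights $\lambda_{D,v}$, the translation of $M$-integrality into boundedness of $\sum_{v\notin M}\lambda_{D,v}(x)\deg v$, and the observation that everything follows from an inequality $\sum_{v\in M}\lambda_{D,v}(x)\deg v\le \theta\,h_{E,D}(x)+O(1)$ with $\theta<1$. You have also correctly located where the hypothesis on $j(E)$ must enter: after stripping Frobenius twists so that $j(E)\notin K^p$, the non-vanishing of the Kodaira--Spencer class is the essential input. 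This is indeed the mechanism underlying Voloch's work.

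The genuine gap is that the decisive step --- the Roth-type inequality with $\theta<1$ --- is asserted rather than proved. The sentence ``applying $\partial$ to $u=f(x)$ and interpreting it through the Gauss--Manin connection \dots\ a Mason--Stothers degree count then bounds the order of approximation by $2$'' is a statement of what needs proving, not an argument: a derivative bound on the base controls quantities in terms of the degrees (heights) of the approximating points themselves, and it is not at all clear that a single differentiation yields an exponent independent of $x$; moreover, in characteristic $p$ such approximation statements are precisely the hard content (they are false for isotrivial curves, where Frobenius orbits approximate with quality comparable to the full height, so any proof must exploit $j\notin K^p$ in a quantitatively precise way). Voloch in the cited paper does not establish a strong Roth-type exponent directly; he obtains finiteness by combining a much weaker height/approximation estimate with his explicit $p$-descent, i.e.\ a Siegel-style amplification in which integral points are divided along the Frobenius--Verschiebung factorization of $[p]$, and it is this descent that substitutes for the missing strong inequality. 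Secondarily, your untwisting step is incomplete: $j(E)\in K^p$ only shows $E$ is a \emph{twist} of a Frobenius pullback $E'^{(p)}$, so you must dispose of the twist and then transfer $M$-integral points along the purely inseparable isogeny (via Verschiebung, after enlarging $M$ and changing $f$); neither is automatic, though both are repairable. As written, the proposal therefore has a real hole at its central step.
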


The following two results are not directly used in our proofs, but we include them to emphasize the strength of the analogy between number fields and function fields. They reflect the finiteness phenomena that pervade the arithmetic geometry of curves: 

\begin{theorem}\cite[Thm $1$]{michael.rosen.2017} 
  Fix $B\in \R^{\geq 0}$. Then there are finitely many geometric and separable extensions $L/\F_q(T)$ in a fixed algebraic closure of $\F_q(T)$ for which the discriminant divisor $d_{L/\F_q(T)}$ has degree at most $B$.
\end{theorem}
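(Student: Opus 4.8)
The plan is to translate the problem into the geometry of finite covers of $\P^1_{\F_q}$ and then exploit the fact that, over a finite field, every moduli-theoretic parameter space in sight has only finitely many rational points. A geometric separable extension $L/\F_q(T)$ of degree $n$ corresponds to a finite separable morphism $f\colon Y\to\P^1_{\F_q}$, where $Y$ is the smooth projective model of $L$; the hypothesis that $L/\F_q(T)$ be geometric means exactly that $Y$ is geometrically connected over $\F_q$, and $n=\deg f=[L:\F_q(T)]$. Under this dictionary the discriminant divisor $d_{L/\F_q(T)}$ is the pushforward $f_*\mathfrak{d}_f$ of the different $\mathfrak{d}_f$ (the ramification divisor on $Y$), and since pushforward along a finite map preserves the $\F_q$-degree of a divisor, one has $\deg d_{L/\F_q(T)}=\deg\mathfrak{d}_f$.

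The first substantive step is to bound the two discrete invariants $n$ and $g:=g_Y$ via Riemann--Hurwitz. Because $L/\F_q(T)$ is separable the formula applies verbatim:
\begin{equation*}
2g-2 \;=\; n\,(2\cdot 0-2)+\deg\mathfrak{d}_f \;=\; -2n+\deg d_{L/\F_q(T)} \;\le\; -2n+B .
\end{equation*}
It is worth stressing that $\deg\mathfrak{d}_f$ incorporates the wild ramification contributions, yet the identity holds exactly, so the bound on the discriminant really is a bound on the total ramification. From $g\ge 0$ and $n\ge 1$ we read off $n\le (B+2)/2$ and $g\le B/2$; hence there are absolute constants $n_0,g_0$, depending only on $B$, with $n\le n_0$ and $g\le g_0$.

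It therefore suffices to show that there are finitely many finite separable covers $f\colon Y\to\P^1_{\F_q}$, defined over $\F_q$, with $Y$ geometrically connected smooth projective of genus $\le g_0$ and $\deg f\le n_0$; this yields finiteness of the subfields $L\subset\overline{\F_q(T)}$, since each isomorphism class of cover accounts for at most $n_0$ such subfields (a separable degree-$n$ extension has exactly $n$ embeddings over the base). I would establish this in two layers, each an instance of the principle that a scheme of finite type over $\F_q$ has finitely many $\F_q$-points. First, there are only finitely many isomorphism classes of smooth projective curves $Y/\F_q$ of genus $\le g_0$, because the moduli of such curves is of finite type over $\F_q$ (concretely, one realizes them via a bounded pluricanonical embedding inside a fixed Hilbert scheme of finite type). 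Second, for each such $Y$ a degree-$n$ morphism to $\P^1_{\F_q}$ is the same datum as a non-constant function $t\in\F_q(Y)$ with $\deg (t)_\infty=n\le n_0$; the polar divisor $(t)_\infty$ is an effective divisor of bounded degree, hence lies in the finite set $\bigsqcup_{m\le n_0}\Sym^m(Y)(\F_q)$, and for each fixed polar divisor $D$ the function $t$ ranges over the finite-dimensional $\F_q$-vector space $H^0(Y,\O_Y(D))$, which is a finite set. So only finitely many covers occur.

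The main obstacle is the first layer: the finiteness of bounded-genus curves over a finite field is where genuine geometry enters, and it must be imported (via finite-typeness of $\mathcal{M}_g$, or directly via the boundedness of canonically embedded curves inside a Hilbert scheme of finite type). Everything else is bookkeeping once one notices that in positive characteristic the role of Minkowski's geometry of numbers in the classical Hermite--Minkowski theorem is played here by the elementary finiteness of $\F_q$-points. An alternative route that avoids the moduli of curves is to produce, via Riemann--Roch on $Y$, a primitive generator $\alpha$ of $L/\F_q(T)$ whose pole divisor has degree bounded in terms of $g_0$ and $n_0$; its minimal polynomial then lies in $\F_q[T][x]$ with both degrees bounded, and such polynomials form a finite set. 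In that approach the delicate point is controlling the pole order of a separating generator uniformly, which is again traceable to the ramification bound supplied by Riemann--Hurwitz.
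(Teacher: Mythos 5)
Your proposal cannot be compared against an internal argument, because the paper does not prove this statement: it is quoted from Rosen \cite{michael.rosen.2017} in the ``Related Results'' subsection, and the author explicitly notes that it is not used in the paper's own proofs. Judged on its own, your argument is correct. The dictionary between geometric separable extensions $L/\F_q(T)$ and separable covers $f\colon Y\to\P^1_{\F_q}$ with $Y$ geometrically connected is right; the identity $\deg d_{L/\F_q(T)}=\deg\mathfrak{d}_f$ holds, with degree preserved under the norm/pushforward precisely because $L$ is geometric (so that hypothesis does real work); and Riemann--Hurwitz with the different, exact even in the presence of wild ramification, gives the bounds $n\le (B+2)/2$ and $g\le B/2$. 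The two finiteness layers are also sound: finitely many smooth projective curves of bounded genus over $\F_q$, and, for fixed $Y$, finitely many maps $Y\to\P^1$ of bounded degree, since the polar divisor lies in the finite set $\bigsqcup_{m\le n_0}\Sym^m(Y)(\F_q)$ and each Riemann--Roch space $H^0(Y,\O_Y(D))$ is a finite set; finally, passing from isomorphism classes of covers to subfields of $\overline{\F_q(T)}$ loses at most a factor of $n$, by separability. It is worth observing that your two ingredients are exactly the ones this paper develops independently for its own purposes: the bounded-genus finiteness is invoked via \cite{de2000counting} in the proof of Proposition \ref{prop that div becomes sections}, and your second layer is a cousin of Lemma \ref{f. m. sep map} (your version, special to the target $\P^1$, is proved more simply via Riemann--Roch and covers inseparable morphisms as well). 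So your argument exhibits Rosen's theorem as a formal consequence of machinery already present in the paper, which the paper itself chose to import as a black box; the only step you rightly flag as genuinely nontrivial is the finiteness of curves of bounded genus over a finite field.
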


\begin{theorem}\label{silv, units}\cite{Silverman_1984}
    Let $k$ be an algebraically closed field of characteristic $p$. Let $X$ be a smooth, projective curve of genus $g$ over $k$, and let $K$ be its function field. Let $M$ be a finite set of points in $X(k)$ and $\O_M$ be the ring of $M$-integers in $K$. If $u,v\in \O^{\times}_M$ such that $u+v=1$, then either $u,v \in k^\times$ or the separable degree of $u$ is at most $2g-2+|M|$.
\end{theorem}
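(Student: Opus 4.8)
The plan is to translate the arithmetic statement into a statement about a morphism to $\P^1$ and then run Riemann--Hurwitz on its separable part. First, observe that the hypotheses say precisely that $u$, viewed as a non-constant rational function, defines a morphism $u\colon X\to\P^1$ whose fibers over $0,1,\infty$ are supported on $M$: indeed $u\in\O_M^{\times}$ forces the zero locus $u=0$ and the pole locus $u=\infty$ to lie in $M$, while $v=1-u\in\O_M^{\times}$ forces the locus $u=1$ to lie in $M$ as well. Hence $u^{-1}(\{0,1,\infty\})\subseteq M$. If $u$ is constant then $u\in k^{\times}$ and $v=1-u\in k^{\times}$, which is the first alternative; so I may assume $u$ is non-constant and must bound its separable degree $d_s:=[K:k(u)]_s$.

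Next I factor the extension $K/k(u)$ through its separable closure. Let $L$ be the separable closure of $k(u)$ in $K$, so that $[L:k(u)]=d_s$ and $K/L$ is purely inseparable of degree $p^f:=[K:k(u)]_i$. Since $K^{p^f}\subseteq L$ and $[K:K^{p^f}]=p^f=[K:L]$, in fact $L=K^{p^f}$, the function field of the Frobenius twist $X^{(p^f)}$. The resulting factorization is
\[
X \xrightarrow{\ \psi\ } X' := X^{(p^f)} \xrightarrow{\ \phi\ } \P^1,
\]
where $\psi$ is the $f$-fold relative Frobenius (purely inseparable, of degree $p^f$) and $\phi$ is separable of degree $d_s$. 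Two features of this factorization make the argument work: the twist $X'$ again has genus $g$, and $\psi$ is a homeomorphism on closed points (Frobenius is radicial, and $k$ is perfect). In particular the number of closed points of $X'$ lying over $\{0,1,\infty\}$ equals the number of closed points of $X$ lying over $\{0,1,\infty\}$ under $u$, which by the first paragraph is at most $|M|$.

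Finally I apply Riemann--Hurwitz to the separable morphism $\phi\colon X'\to\P^1$. Denoting by $\mathfrak{R}$ its ramification divisor, we have $2g-2=-2d_s+\deg\mathfrak{R}$, and since $\phi$ is separable each point $P$ contributes at least $e_P-1$ to $\deg\mathfrak{R}$, so $\deg\mathfrak{R}\ge\sum_P(e_P-1)$. Restricting the last sum to the three fibers over $0,1,\infty$ and writing $r$ for the total number of distinct points in those fibers gives $\deg\mathfrak{R}\ge 3d_s-r$. Combining,
\[
2g-2 = -2d_s+\deg\mathfrak{R}\ \ge\ -2d_s+(3d_s-r)\ =\ d_s-r,
\]
so $d_s\le 2g-2+r$. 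Since $r\le|M|$ by the previous paragraph, we conclude $d_s\le 2g-2+|M|$, as claimed.

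I expect the main subtlety to be the characteristic-$p$ bookkeeping rather than the geometry: one must apply Riemann--Hurwitz only to the separable part $\phi$, since the formula fails for inseparable maps; one must verify that the inseparable part is exactly a Frobenius twist so that the genus is unchanged; and one must note that the wild ramification which may occur only increases $\deg\mathfrak{R}$ and therefore strengthens, rather than weakens, the inequality $\deg\mathfrak{R}\ge 3d_s-r$. The transfer of the point-count bound $r\le|M|$ across $\psi$ uses that $k$ is perfect, which holds since $k$ is algebraically closed.
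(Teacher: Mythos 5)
Your proof is correct, but there is nothing in the paper to compare it against: Theorem~\ref{silv, units} is quoted from Silverman's 1984 paper as a ``related result'' explicitly not used or proved in this paper, so the only meaningful comparison is with the original. Your argument is essentially the standard one (and close in spirit to Silverman's): the unit conditions on $u$ and $v=1-u$ place $u^{-1}(\{0,1,\infty\})$ inside $M$, and the char-$p$ difficulty is handled exactly where it should be, by factoring $u$ as the $f$-fold relative Frobenius $X\to X^{(p^f)}$ followed by a separable map $\phi$ of degree $d_s$, then running Riemann--Hurwitz on $\phi$ alone. The delicate points all check out: since $k$ is perfect one indeed has $[K:K^{p^f}]=p^f$, so the separable closure $L$ of $k(u)$ in $K$ equals $K^{p^f}$ and the intermediate curve is the Frobenius twist, which has the same genus $g$ because it is a base change along an automorphism of $k$; Frobenius is a universal homeomorphism, so the count $r=\#\,\phi^{-1}(\{0,1,\infty\})\le |M|$ transfers across $\psi$; and since $k$ is algebraically closed each of the three fibers satisfies $\sum_P e_P = d_s$, giving $\deg\mathfrak{R}\ge 3d_s-r$ with wild ramification only strengthening the inequality, whence $d_s\le 2g-2+r\le 2g-2+|M|$. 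Note also that your factorization (Frobenius first, into the twist) is the correct general form over a perfect field, and is the mirror of the paper's Lemma~\ref{maps form} (separable map followed by absolute Frobenius on the target, valid for curves over $\F_q$); either version, combined with Theorem~\ref{Rim-Hur formula}, yields the bound, so your argument is fully compatible with the toolkit the paper already develops.
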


Finally, we recall a result on the extension of automorphisms from the generic fiber to the entire family.

\begin{theorem}\label{ext aut}
    \cite[4, Prop 4.6]{silverman1994advanced}  Let $R$ be a Dedekind domain with fraction field $K$ and $C$ be a smooth projective curve of genus at least $1$. Let $\CC$ over $R$ be a minimal proper regular model for $C$ over $K$ and $\CC^0$ be the largest sub-scheme of $\CC$ which is smooth over $R$. Then every $K$-automorphism $\tau : C\rightarrow C$ of the generic fiber of $\CC$ extends to $R$-automorphisms
    \begin{equation*}
         \tilde{\tau} : \CC \to \CC \qquad \text{and} \qquad \tau^0 : \CC^0 \to \CC^0.
    \end{equation*}
\end{theorem}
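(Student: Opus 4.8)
The plan is to reduce the statement to the uniqueness of the minimal proper regular model of a curve of genus $\geq 1$ over a Dedekind base. Recall that for a smooth projective geometrically connected curve $C/K$ with $g \geq 1$, the minimal proper regular model $\CC/R$ not only exists but is unique up to a unique $R$-isomorphism inducing the identity on the generic fiber; this is the Lichtenbaum--Shafarevich theory of arithmetic surfaces, in which minimality is detected fiberwise by the absence of $(-1)$-curves via Castelnuovo's contraction criterion. Everything follows once we realize $\tau$ as a re-identification of the generic fiber that is matched by an honest automorphism of the total space.

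Concretely, I would proceed as follows. Given $\tau \in \Aut_K(C)$, let $\phi : \CC_K \xrightarrow{\sim} C$ denote the chosen identification of the generic fiber, and form the model $\CC^\tau$ consisting of the same scheme $\CC$ but with generic fiber identified via $\tau \circ \phi$. Being proper, regular, and relatively minimal are intrinsic properties of the scheme $\CC/R$ and are insensitive to the choice of generic-fiber identification, so $\CC^\tau$ is again a minimal proper regular model of $C$. Applying the uniqueness statement to $\CC$ and $\CC^\tau$ produces a unique $R$-isomorphism $\tilde\tau : \CC \to \CC$ whose restriction to the generic fiber is $\tau$; this is the desired extension. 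For the smooth locus, note that $\CC^0$ is characterized as the largest open subscheme of $\CC$ that is smooth over $R$, a condition preserved by any $R$-isomorphism, so $\tilde\tau(\CC^0) = \CC^0$, and restricting $\tilde\tau$ gives the automorphism $\tau^0 : \CC^0 \to \CC^0$.

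The main obstacle is the uniqueness input in the first step, namely showing that the birational self-map of $\CC$ induced by $\tau$ is in fact an everywhere-defined isomorphism. A priori $\tau$ extends only to a birational map $\CC \dashrightarrow \CC$, defined away from finitely many closed points of the special fibers. The resolution uses the structure theory of arithmetic surfaces: any birational map between relatively minimal regular models of a genus $\geq 1$ curve factors through a sequence of blow-ups and blow-downs of $(-1)$-curves, all of which are excluded by relative minimality, forcing the map to be an isomorphism. The hypothesis $g \geq 1$ is essential and cannot be dropped: for $g = 0$ the minimal model fails to be unique and automorphisms of the generic fiber need not extend, which is precisely why the genus-zero case must be handled separately elsewhere in this paper.
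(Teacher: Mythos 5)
The paper does not prove this statement at all---it is imported as a quoted result from Silverman's \emph{Advanced Topics} (Prop.\ IV.4.6)---and your argument is essentially the proof given in that source: re-identify the generic fiber of $\CC$ via $\tau\circ\phi$ to get a second minimal proper regular model, invoke uniqueness of the minimal model (valid precisely because $g\ge 1$, via factorization of birational maps into blow-ups/blow-downs of $(-1)$-curves), and observe that any $R$-isomorphism preserves the maximal smooth-over-$R$ open subscheme $\CC^0$. So your proposal is correct and takes essentially the same approach as the reference the paper relies on.
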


This extension property is crucial when studying the global structure of automorphism groups and identifying when two rational sections are considered equivalent.

\subsection{Outline}
The structure of this paper is as follows. In Section 2, we present the necessary background and foundational results that support the main argument. This includes an overview of noncommutative cohomology, the theory of smooth proper morphisms of curves, and the behavior of the relative Frobenius morphism in families of curves. We also provide a brief discussion of Azumaya algebras and $\P^1$-bundles. Section 3 provides more technical results about curves that are used repeatedly in the proof of Theorem \ref{etale theo}.

Section 4 is devoted to the study of isotrivial families of curves. We focus on understanding the structure of such families, as well as the relative Frobenius morphism and its iterates.
In Section 5, we employ the method of Galois descent to reduce Theorem \ref{etale theo} to the simpler statement of Theorem \ref{section theo}, in the case where the family is either trivial or non-isotrivial. This descent argument uses the Galois action on covers of the base curve to control rational points and sections.
Finally, Section 6 completes the paper by proving Theorem~\ref{section theo} in both the trivial and non-isotrivial settings.
\section{Background and notations}\label{background}

\subsection{Function Fields}

The following results can be found in \cite{silverman2009arithmetic} and \cite{stichtenoth2009algebraic}.

\begin{theorem}
    Let $C$ and $D$ be smooth, proper and geometrically irreducible curves defined over a field $k$, and let $\phi : C \rightarrow D$ be a morphism defined over $k$. Then $\phi$ is either constant or surjective.
\end{theorem}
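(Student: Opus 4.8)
The plan is to establish the dichotomy purely topologically, by showing that the image $\phi(C)$ is a closed, irreducible subset of $D$, and then invoking the classification of such subsets in a one-dimensional irreducible scheme. First I would argue that $\phi$ is a closed map. Since $C$ is proper over $k$ and $D$ is separated over $k$ (both being smooth proper curves), the cancellation property for proper morphisms applied to the factorization $C \xrightarrow{\phi} D \to \Spec k$ shows that $\phi$ itself is proper. Proper morphisms are universally closed, in particular closed, so the image $\phi(C)$ is a closed subset of $D$.

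Next, because $C$ is irreducible and $\phi$ is continuous, the image $\phi(C)$ is an irreducible subset of $D$, and its closure, which equals $\phi(C)$ by the previous step, is an irreducible closed subset of $D$. I would then classify these: as $D$ is irreducible of dimension $1$, every irreducible closed subset is either a single closed point (dimension $0$) or all of $D$ (the closure of the generic point). Equivalently, one may track the generic point $\eta_C$ of $C$: if $\phi(\eta_C)$ is the generic point of $D$ then $\phi(C)$ is dense and, being closed, equals $D$; if $\phi(\eta_C)$ is a closed point $y$, then $\phi(C) \subseteq \overline{\{y\}} = \{y\}$.

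Consequently $\phi(C)$ is either a single closed point or the whole of $D$. In the former case $\phi$ factors set-theoretically through one point and is therefore constant; in the latter case $\phi$ is surjective, which completes the dichotomy.

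The steps are all standard, so there is no serious obstacle; the only points requiring care are the deduction that $\phi$ is proper from the properness of $C$ (via the cancellation lemma, using separatedness of $D$) and the precise meaning of ``constant,'' namely that a morphism whose set-theoretic image is a single closed point is constant. Neither presents a genuine difficulty, and the argument does not require $k$ to be algebraically closed: irreducibility of $C$ and $D$ over $k$ already suffices, so the stated geometric irreducibility is more than enough.
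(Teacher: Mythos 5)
Your proof is correct. The paper states this theorem only as a background result, citing \cite{silverman2009arithmetic} and \cite{stichtenoth2009algebraic} without giving any proof of its own; your argument (properness of $\phi$ via the cancellation lemma, hence closed image, plus irreducibility and the dimension-one classification of irreducible closed subsets) is exactly the standard argument used in those references, so the proposal matches the intended proof.
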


If $\phi : C \rightarrow D$ is a nonconstant morphism defined over $k$, then composition with $\phi$ induces an injection of function fields fixing $k$,
\begin{equation*}
    \phi^*: k(D) \rightarrow k(C), \quad \quad \phi^* f = f \circ \phi.
\end{equation*}

\begin{theorem}
    Let $C$ and $D$ be smooth, proper and geometrically irreducible curves defined over $k$. Then:
    \begin{itemize}
        \item If $\phi: C \rightarrow D$ is a nonconstant morphism defined over $k$, then $k(C)$ is a finite field extension of $\phi^*(k(D))$.
        \item Conversely, if $\iota : k(D) \rightarrow k(C)$ is $k$-algebra injection, then there exists a unique nonconstant morphism $\phi : C \rightarrow D$ defined over $k$ such that $\phi^* = \iota$.
        \item If $\mathbb{K} \subset k(C)$ is a subfield of finite index containing $k$, then there exists a smooth curve $C'$ over $k$, unique up to $k$-isomorphism, and a nonconstant morphism $\phi: C \rightarrow C'$ defined over $k$ such that $\phi^*(k(C')) = \mathbb{K}$.
    \end{itemize}
\end{theorem}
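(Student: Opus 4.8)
The plan is to recognize these three assertions as the three pieces of the standard anti-equivalence between the category of smooth projective curves over $k$ (with nonconstant $k$-morphisms) and the category of function fields of transcendence degree one over $k$ (with $k$-algebra injections). For the first part, I would begin by showing that a nonconstant morphism $\phi : C \to D$ is finite: since $C$ is proper over $k$ and $D$ is separated, $\phi$ is proper, and since $\phi$ is nonconstant it is dominant, so each fiber is a proper closed subscheme of $C$ of dimension zero, hence finite. A proper quasi-finite morphism is finite. On function fields, $\phi^*$ is injective because $\phi$ is dominant, and both $k(C)$ and $k(D)$ have transcendence degree one over $k$; therefore the inclusion $\phi^*(k(D)) \subseteq k(C)$ is algebraic (equal transcendence degrees). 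As $k(C)$ is finitely generated as a field over $k$, it is finitely generated over $\phi^*(k(D))$, and a finitely generated algebraic field extension is finite.

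For the second part, the $k$-algebra injection $\iota : k(D) \to k(C)$ corresponds, on the spectra of the generic points, to a dominant rational map $C \dashrightarrow D$. The key input is that a rational map from a smooth (hence regular) curve to a proper $k$-scheme extends to a morphism: at each closed point the local ring is a discrete valuation ring, so the valuative criterion of properness forces the map to be defined there, yielding a genuine morphism $\phi : C \to D$ with $\phi^* = \iota$. Uniqueness follows because two morphisms $C \to D$ inducing the same map on function fields agree on the generic point of the integral scheme $C$, and since $D$ is separated they must coincide everywhere.

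For the third part, I would first note that $\mathbb{K}$ is itself a function field of transcendence degree one over $k$: it contains $k$ and sits inside $k(C)$, and since $[k(C):\mathbb{K}] < \infty$ the extension $k(C)/\mathbb{K}$ is algebraic, so $\mathbb{K}$ has the same transcendence degree one (in particular $\mathbb{K} \neq k$). One then constructs $C'$ as the smooth projective model of $\mathbb{K}$ — concretely the set of discrete valuation rings of $\mathbb{K}/k$ endowed with its natural scheme structure, equivalently the normalization of any projective model. Over a perfect base this regular curve is smooth, and it is unique up to $k$-isomorphism by the first two parts. The inclusion $\mathbb{K} = k(C') \hookrightarrow k(C)$ then produces, via the second part, the required nonconstant morphism $\phi : C \to C'$ with $\phi^*(k(C')) = \mathbb{K}$.

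The main obstacle — indeed the only nontrivial content — lies in the two existence assertions: extending the dominant rational map of the second part to an everywhere-defined morphism, and constructing the smooth projective model $C'$ in the third part. Both rest on the theory of discrete valuations of a one-dimensional function field: the valuative criterion supplies the extension in the second part, while realizing the set of all such valuations as a smooth projective curve, and checking that it is separated, proper, and carries the prescribed function field, is the heart of the third part. I would develop these through the standard normalization construction together with the properness of projective space, as in the references cited above.
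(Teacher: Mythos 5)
The paper does not prove this theorem at all: it is quoted as standard background, with the reference line ``The following results can be found in \cite{silverman2009arithmetic} and \cite{stichtenoth2009algebraic}'' covering it, so there is no in-paper argument to compare against. Your sketch is the standard proof of the curves--function-fields anti-equivalence and is correct as far as it goes: part one via properness, quasi-finiteness, and transcendence-degree counting (finitely generated plus algebraic gives finite); part two via the valuative criterion at the discrete valuation rings of the smooth curve $C$, with uniqueness from separatedness of $D$; part three via the smooth projective model of $\mathbb{K}$ together with parts one and two for uniqueness. Two points are worth making explicit if this were written out in full. First, in part three you should check that $C'$ is again \emph{geometrically} irreducible, i.e.\ that $k$ is algebraically closed in $\mathbb{K}$; this follows because $k$ is algebraically closed in $k(C)$ (as $C$ is geometrically integral) and $\mathbb{K}\subseteq k(C)$. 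Second, your caveat about perfectness is the genuine one: over an imperfect field the regular projective model of $\mathbb{K}$ need not be smooth, so the theorem as stated really needs $k$ perfect (or a separability hypothesis); this is harmless in the paper, which only invokes the result over finite fields and their extensions, but it deserves a remark rather than a parenthetical.
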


\begin{remark}
    As a consequence, for any smooth and geometrically irreducible curve $C$ over $k$, there exists a unique smooth projective curve $\overline{C}$ over $k$ together with an open embedding $\iota : C \hookrightarrow \overline{C}$.
\end{remark}

For a morphism $\phi : C \rightarrow D$ defined over $k$, if $\phi$ is constant, we define the degree of $\phi$ to be zero. Otherwise, $\phi$ is finite and we define its degree as
\begin{equation*}
    \deg \phi = [k(C) : \phi^*(k(D))].
\end{equation*}
We say that $\phi$ is separable, inseparable, or purely inseparable according to the corresponding property of the field extension $k(C) / \phi^*(k(D))$.

As will be explained in the next section, the absolute Frobenius morphism is an endomorphism of curves defined over finite fields $\mathbb{F}_q$.

\begin{lemma}\label{maps form}
    Let $f : C \to D$ be a morphism defined over $\mathbb{F}_q$. Then either $f$ is constant, meaning $f(C) = \{x\}$ for some fixed $x \in D(\mathbb{F}_q)$, or $f$ can be factored as
    \[
        f = F_D^r \circ t,
    \]
    where $F_D$ is the absolute Frobenius on $D$, $r \geq 0$ is an integer, and $t: C \to D$ is a separable morphism.
\end{lemma}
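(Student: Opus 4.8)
The plan is to separate the two cases in the statement and, in the nonconstant case, to extract the inseparable part of the induced function-field extension as a power of Frobenius, using crucially that $\F_q$ is perfect.

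First I would dispose of the constant case. If $f$ is constant, its set-theoretic image is a single closed point $x\in D$, and $f$ factors through $\Spec \kappa(x)\to D$. Since $C$ is geometrically integral over $\F_q$, the field $\F_q$ is algebraically closed in $\F_q(C)$; as $\kappa(x)$ embeds into $\F_q(C)$ over $\F_q$ and is finite over $\F_q$, this forces $\kappa(x)=\F_q$, so $x\in D(\F_q)$, as claimed.

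Now assume $f$ is nonconstant, hence finite, and consider the finite extension $f^*\colon \F_q(D)\hookrightarrow \F_q(C)$. Write $p=\mathrm{char}\,\F_q$ and let $p^r$ be the inseparable degree of this extension, so that $K:=f^*\F_q(D)\subseteq \F_q(C)=:L$ has $[L:K]_{\mathrm{sep}}=[L:K]/p^r$. The key claim is that the purely inseparable part of this extension is exactly a power of Frobenius: namely $L\supseteq K^{1/p^r}$ and $L/K^{1/p^r}$ is separable. Granting this, I would define $t^*\colon \F_q(D)\to L$ by $t^*(g)=(f^*g)^{1/p^r}$, where the $p^r$-th root is taken in $L$; this is well defined and a field homomorphism because in characteristic $p$ each element has at most one $p^r$-th root and $x\mapsto x^{1/p^r}$ is additive and multiplicative. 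By the equivalence between curves and their function fields recalled above, $t^*$ corresponds to a nonconstant morphism $t\colon C\to D$, and its image $t^*\F_q(D)=K^{1/p^r}$ is precisely the subfield over which $L$ is separable, so $t$ is separable.

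It then remains to check the factorization. Writing $F_D$ for the absolute Frobenius, $(F_D^r)^*$ is the map $g\mapsto g^{p^r}$ on $\F_q(D)$, and for every $g$ we have $t^*\big((F_D^r)^*g\big)=t^*(g^{p^r})=\big(f^*(g^{p^r})\big)^{1/p^r}=\big((f^*g)^{p^r}\big)^{1/p^r}=f^*g$. Hence $f^*=t^*\circ (F_D^r)^*$, i.e.\ $f=F_D^r\circ t$, which is the desired decomposition; equivalently, this is just the naturality identity $F_D^r\circ t=t\circ F_C^r$ for the absolute Frobenius, so one could instead place the Frobenius on the source. I expect the one genuinely substantive point to be the key claim, that the inseparable degree is captured entirely by Frobenius ($K^{1/p^r}\subseteq L$ with $L/K^{1/p^r}$ separable). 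This is exactly where perfectness of $\F_q$ enters: function fields of curves over a perfect field are separably generated, so a finite extension of such fields is separable over its maximal purely inseparable subextension, and that subextension is obtained by adjoining $p^r$-th roots. I would record this as a short field-theoretic lemma, or simply invoke the standard separable/purely inseparable factorization of maps of curves over a perfect field, rather than reprove it in place.
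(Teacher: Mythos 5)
Your overall strategy is the standard one: in fact the paper gives no proof of this lemma at all (it is stated as background, deferred to the references by Silverman and Stichtenoth), and what you have written is essentially the argument from those references. The constant case is handled correctly, the key field-theoretic claim (that the maximal separable subextension of $L/K$ is $L^{p^r}$, so that $K^{1/p^r}\subseteq L$ with $L/K^{1/p^r}$ separable) is true and correctly attributed to perfectness of the base field, and the formal verification $f^*=t^*\circ (F_D^r)^*$ is fine. The genuine gap is in the step where you say that $t^*$ ``corresponds to a nonconstant morphism $t\colon C\to D$'' by the equivalence recalled earlier in the paper. That equivalence is stated for \emph{$\F_q$-algebra} injections $\F_q(D)\to \F_q(C)$ and produces morphisms defined over $\F_q$. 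But write $q=p^e$: your map $t^*(g)=(f^*g)^{1/p^r}$ restricted to the constant field $\F_q$ is $a\mapsto a^{1/p^r}$ (since $f^*$ fixes $\F_q$), and this is the identity only when $e\mid r$. So for non-prime $q$ your $t^*$ is in general \emph{not} an $\F_q$-algebra homomorphism; it is only $\F_p$-linear, the recalled equivalence does not apply to it, and the morphism $t$ it yields (via the analogous equivalence over the prime field) is only a morphism of $\F_p$-schemes, semilinear over $\F_q$, not a morphism defined over $\F_q$ like every other map in this section.

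This is not a cosmetic slip, because it sits exactly on the fault line along which the statement changes truth value. The paper's $F_D$ is the $q$-power, $\F_q$-linear Frobenius — that is what the commutative diagram in Section \ref{background} forces, and what Proposition \ref{frob control} uses when it expands $F_D^r\circ t_2(z)=\sum_h b_h^{q^r}z^{q^rh}$ — whereas your proof uses the $p$-power Frobenius throughout. With the $q$-power Frobenius, any factorization $f=F_D^r\circ t$ with $t$ separable forces the inseparable degree of $f$ to equal $q^r$, and that can genuinely fail: for $q=p^2$, the $\F_q$-morphism $[X:Y]\mapsto [X^p:Y^p]$ of $\P^1_{\F_q}$ has inseparable degree $p$, which is not a power of $q$. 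So no repair of your argument can deliver the lemma in the form the paper's conventions suggest ($\F_q$-linear Frobenius, $t$ defined over $\F_q$); what your argument does prove, once the equivalence issue above is fixed by working over $\F_p$, is the weaker (and correct) reading: $F_D$ the $p$-power absolute Frobenius and $t$ a separable morphism of schemes that need not be defined over $\F_q$. You should state explicitly that this is the version you establish, and flag the possible failure of $\F_q$-rationality of $t$, rather than appealing to the over-$\F_q$ equivalence as written.
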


\begin{theorem}[Riemann–Hurwitz formula]\label{Rim-Hur formula}
    Let $k$ be a field. Let $f: C \to D$ be a finite, separable morphism of smooth and geometrically irreducible curves over $k$. Let $d$ be the degree of $f$ and $R$ be the ramification divisor of $f$. Then
    \[
        -\chi(C) = -d \cdot \chi(D) + \deg(R),
    \]
    where $\chi(\cdot)$ denotes the Euler characteristic.
\end{theorem}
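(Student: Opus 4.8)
The plan is to deduce the formula from the comparison of canonical divisors under pullback, and then translate the resulting degree identity into Euler characteristics via the identification $\deg K = 2g-2 = -\chi$. The argument is entirely local-to-global: a single rational differential on $D$ is pulled back to $C$, and its order of vanishing is tracked point by point.

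First I would fix a nonzero rational differential $\omega$ on $D$ and consider its pullback $f^{*}\omega$, which is again a nonzero rational differential on $C$ precisely because $f$ is separable: separability makes $f$ generically \'etale, so the induced map $f^{*}\Omega^{1}_{D/k}\to\Omega^{1}_{C/k}$ is injective. The heart of the matter is the comparison of $\Div(f^{*}\omega)$ with $f^{*}\Div(\omega)$ at each closed point $P$ of $C$. Writing $Q=f(P)$, choosing uniformizers $t$ at $Q$ and $s$ at $P$, and expressing $\omega=h\,dt$ locally, one obtains
\[
\ord_{P}(f^{*}\omega)=e_{P}\,\ord_{Q}(\omega)+d_{P},
\]
where $e_{P}$ is the ramification index and $d_{P}=\ord_{P}\!\big(d(f^{*}t)\big)$ is the different exponent. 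Since $e_{P}\,\ord_{Q}(\omega)$ is exactly the coefficient of $P$ in $f^{*}\Div(\omega)$, summing over all $P$ yields the divisor-class identity $K_{C}=f^{*}K_{D}+R$, where $R=\sum_{P}d_{P}\,P$ is the ramification divisor of $f$. This identity is the full geometric content of the theorem.

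Next I would take degrees. The formula $\deg(f^{*}D')=\deg(f)\cdot\deg(D')=d\cdot\deg(D')$ for the pullback of a divisor along the finite morphism $f$ gives $\deg K_{C}=d\cdot\deg K_{D}+\deg R$. Finally, invoking $\deg K_{C}=2g_{C}-2=-\chi(C)$ and $\deg K_{D}=2g_{D}-2=-\chi(D)$, which over the base field $k$ follow from Serre duality and Riemann--Roch on the smooth, geometrically irreducible curves $C$ and $D$, substitution produces $-\chi(C)=-d\cdot\chi(D)+\deg R$, as claimed.

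The main obstacle is the local differential computation defining $R$, specifically the presence of wild ramification: when $\operatorname{char}(k)=p$ divides some $e_{P}$, the exponent $d_{P}$ strictly exceeds $e_{P}-1$, so one cannot reduce $\deg R$ to $\sum_{P}(e_{P}-1)$. Phrasing $R$ through the different (the vanishing order of $d(f^{*}t)$) is what sidesteps splitting into tame and wild cases, but it requires verifying that $d_{P}$ is finite and independent of the chosen uniformizers and that $R$ is effective, points best checked after passing to the completions of the local rings at $P$ and $Q$. A secondary subtlety is that genus and degree must be interpreted over the possibly non-algebraically-closed $k$; here geometric irreducibility is what guarantees that base change to $\overline{k}$ leaves the genus unchanged, so the classical degree-of-canonical-divisor identity remains valid and the Euler-characteristic translation goes through.
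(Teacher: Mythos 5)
The paper itself contains no proof of this statement: it is listed among background results in Section~\ref{background} and attributed to \cite{silverman2009arithmetic} and \cite{stichtenoth2009algebraic}, so your argument has to be judged on its own. What you give is the standard canonical-divisor proof, and for \emph{complete} curves it is correct and complete: separability guarantees $f^{*}\omega\neq 0$, the local identity $\ord_{P}(f^{*}\omega)=e_{P}\ord_{Q}(\omega)+d_{P}$ with $d_{P}$ the different exponent is the right way to handle wild ramification, and taking degrees in $K_{C}=f^{*}K_{D}+R$ together with $\deg K=2g-2=-\chi$ (valid over a general $k$ because geometric irreducibility gives $H^{0}(\Oo_C)=k$) yields the formula.

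The gap is that your proof silently assumes $C$ and $D$ are projective, while the statement does not, and the paper genuinely uses the non-complete case: in Proposition~\ref{prop that div becomes sections} the formula is applied to finite \'etale covers $D_i\to C$ of the \emph{affine} base, and in Section~\ref{final} to punctured curves, with $\chi(C)=2-2g_{C}-m$ where $m$ counts geometric boundary points. Every tool you invoke --- degree of the canonical class, Riemann--Roch/Serre duality, invariance of degree under linear equivalence, $\deg f^{*}D'=d\deg D'$ --- is a statement about complete curves, so the open case is simply not addressed. One can try to deduce it by compactifying to $\overline{f}:\overline{C}\to\overline{D}$, applying your projective case, and subtracting boundary points; the discrepancy one picks up is
\[
\sum_{Q\in\overline{D}\setminus D}\ \sum_{P\mapsto Q}\bigl(d_{P}-(e_{P}-1)\bigr)\ \ge\ 0,
\]
which vanishes exactly when $\overline{f}$ is tamely ramified over the boundary. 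Without tameness the open-curve statement is actually false: the Artin--Schreier cover $f:\mathbb{A}^{1}\to\mathbb{A}^{1}$, $y\mapsto y^{p}-y$, over $\overline{\mathbb{F}}_{p}$ is finite, separable and \'etale of degree $p$, so $R=0$, yet $-\chi(\mathbb{A}^{1})=-1\neq -p\cdot\chi(\mathbb{A}^{1})$. So your write-up proves the theorem only under an unstated completeness hypothesis; as stated (and as later used in the paper) the result needs either that hypothesis or tameness along the boundary, and in the wild case no proof can exist because the formula fails.
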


In the first part of this section, the curves $C$ and $D$ were assumed to be defined over the same field $k$. We now consider a more general situation where this is not necessarily the case. Specifically, let $\phi: D \to C$ be a finite separable morphism between smooth, proper and geometrically irreducible curves $C$ and $D$ defined over possibly different fields $k$ and $k'$, with function fields $K = k(C)$ and $L = k'(D)$ respectively.

\begin{theorem}
    Define $E = K k'$, the compositum of $K$ and $k'$. Then:
    \begin{itemize}
        \item The extension $k'|k$ is finite, and $K \cap k' = k$.
        \item We have
        \[
            [E : K] = [k' : k].
        \]
        \item The extension $L|E$ is an extension of function fields with the same constant field, and
        \[
            [L : K] = [L : E] \cdot [k' : k].
        \]
    \end{itemize}
\end{theorem}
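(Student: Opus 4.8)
The plan is to use $\phi^*$ to regard $K$ as a subfield of $L$; since $\phi$ is finite, $L/K$ is a finite field extension of degree $\deg \phi$, and all of the fields $k, k', K, E, L$ live inside $L$. I would first record the inclusion $k \subseteq k'$: as $k$ and $k'$ are the fields of constants of $K$ and $L$ (the relative algebraic closures of the common base field in $K$ and in $L$), the inclusion $K \subseteq L$ makes this algebraic closure grow, so $k \subseteq k'$. For $K \cap k' = k$, take $x \in K \cap k'$; then $x$ is a constant of $L$, hence algebraic over $k$, and $x \in K$, so $x$ lies in the relative algebraic closure of $k$ in $K$, which is $k$ itself because $C$ is geometrically irreducible over $k$ (equivalently, $k$ is algebraically closed in $K$). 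The reverse inclusion is clear, giving $K \cap k' = k$.

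Second, I would prove $k'/k$ is finite by a transcendence-degree count followed by a bound. Since $L/K$ is algebraic, $\operatorname{trdeg}_k L = \operatorname{trdeg}_k K = 1$; on the other hand $\operatorname{trdeg}_k L = \operatorname{trdeg}_{k'} L + \operatorname{trdeg}_k k' = 1 + \operatorname{trdeg}_k k'$, so $\operatorname{trdeg}_k k' = 0$ and $k'/k$ is algebraic. To get finiteness, pick $t \in K$ transcendental over $k$; then $K/k(t)$ and hence $L/k(t)$ are finite. Because $k(t)/k$ is purely transcendental it is linearly disjoint from the algebraic extension $k'/k$, so every finite subextension $m$ of $k'/k$ satisfies $[m:k] = [m(t):k(t)] \le [L:k(t)] < \infty$; hence $[k':k] \le [L:k(t)] < \infty$.

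The crux is the equality $[E:K] = [k':k]$ for the compositum $E = Kk'$. One always has $[Kk':K] \le [k':k]$, and the identity $K \cap k' = k$ by itself is not enough to force equality; the genuine input is that $C$ is smooth and geometrically irreducible over $k$, so $C$ is geometrically integral and $K/k$ is a regular extension (separable, with $k$ algebraically closed in $K$). A regular extension is linearly disjoint over $k$ from every algebraic extension of $k$ inside a common overfield, in particular from $k'$, and linear disjointness yields exactly $[Kk':K] = [k':k]$. I expect this to be the main obstacle, as it is the one point where geometric irreducibility and smoothness are essential, beyond the set-theoretic intersection statement.

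Finally I would identify the constant field of $E$ and conclude. Since $k' \subseteq E \subseteq L$ and $k'$ is algebraically closed in $L$, it is also algebraically closed in $E$, so $E$ has the same field of constants $k'$ as $L$; as $L/E$ is finite (being a subextension of the finite $L/K$), $L/E$ is a finite extension of function fields with constant field $k'$. The asserted degree formula $[L:K] = [L:E]\cdot[k':k]$ then follows from the tower law $[L:K] = [L:E]\,[E:K]$ together with $[E:K] = [k':k]$ from the previous step.
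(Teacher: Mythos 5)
The paper never proves this theorem: it is stated in Section~2 as quoted background, attributed to \cite{silverman2009arithmetic} and \cite{stichtenoth2009algebraic}, so there is no internal proof to compare your attempt against. Judged on its own terms, your argument is the standard one for constant-field extensions, and its core is correct: the decisive point, which you isolate accurately, is that smoothness together with geometric irreducibility makes $K/k$ a regular extension, hence linearly disjoint from the algebraic extension $k'/k$; linear disjointness upgrades the trivial inequality $[Kk':K]\le[k':k]$ to an equality, and the remaining bullets follow from the transcendence-degree count, the tower law $[L:K]=[L:E]\,[E:K]$, and the observation that $k'$, being algebraically closed in $L$, stays algebraically closed in the intermediate field $E$.

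Two caveats. First, your opening claim $k\subseteq k'$ is not a consequence of the hypotheses as literally stated---and neither is the theorem: take $K=L=\mathbb{F}_p(s,u)$, $\phi$ the identity, $C=\mathbb{P}^1$ over $k=\mathbb{F}_p(u)$ (coordinate $s$) and $D=\mathbb{P}^1$ over $k'=\mathbb{F}_p(s)$ (coordinate $u$); all hypotheses hold, yet $K\cap k'=k'\neq k$ and $k'/k$ is not even a field extension. Your parenthetical justification, that $k$ and $k'$ are the relative algebraic closures of a common base field in $K$ and $L$, is exactly the needed repair, but it is valid only when $k$ and $k'$ are themselves algebraic over that common base field. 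This is automatic in the setting where the paper applies the theorem ($k$ and $k'$ finite extensions of $\mathbb{F}_p$, and $k$ algebraically closed in $K$ then forces $k$ to equal the algebraic closure of $\mathbb{F}_p$ in $K$), so your proof is sound there, but the hypothesis should be made explicit. Second, a small circularity of order: in proving $K\cap k'=k$ you declare an element of $k'$ to be ``algebraic over $k$,'' which is justified only by the transcendence-degree count you carry out afterwards; simply establish that $k'/k$ is algebraic first. (A last nitpick: geometric irreducibility alone only makes $k$ \emph{separably} closed in $K$; it is smoothness, or perfectness of $k$, that upgrades this to algebraically closed---since you invoke smoothness for regularity anyway, nothing is lost.)
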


This result about field extensions is particularly relevant when analyzing base changes, especially in the context of Galois descent.

\subsection{Relative Frobenius}

For any scheme $X$ over the finite field $\F_q$, there is an absolute Frobenius $F_X$ such that:
\begin{equation*}
    \begin{tikzcd}
X \arrow[rr, "F_X"] \arrow[rd] &      & X \arrow[ld] \\
                               & \F_q &             
\end{tikzcd}
\end{equation*}
For any schemes $X,Y$ over $\F_q$, we have $F_{X\times Y} = F_X \times F_Y$ and for any map $f:X\rightarrow Y$, we have 
\begin{equation*}
    \begin{tikzcd}
X \arrow[rr, "F_X"] \arrow[d, "f"'] &  & X \arrow[d, "f"] \\
Y \arrow[rr, "F_Y"]                &  & Y               
\end{tikzcd}
\end{equation*}
Let $k\ge 1$ be an integer and $X':=X\times_{\F_q} \F_{q^k}$, then 
\( F_{X'} \) is just $F_X^k$, the absolute Frobenius of $F_X$ composed with itself $k$ times.

Let $X $ be a scheme over the base $T$ over the finite field $\F_q$. Define $X^{(q/T)}$ to be the fiber product of the maps $F_T:T\rightarrow T$ and $f:X\rightarrow T$. By the previous comment we see that $f\circ F_X = F_T\circ f$, and therefore, by the universal property of the fiber product there exists a unique map $F_{X/T}:X\rightarrow X^{(q/T)}$. This map is called the relative Frobenius of $X$ over $T$.

\begin{equation*}
    \begin{tikzcd}
X \arrow[rrd, "F_X", bend left] \arrow[rdd, "f"', bend right] \arrow[rd, "F_{X/T}", dashed] &                             &                  \\
                                                                                            & X^{(q/T)} \arrow[r] \arrow[d] & X \arrow[d, "f"] \\
                                                                                            & T \arrow[r, "F_T"]          & T               
\end{tikzcd}
\end{equation*}
Also, for a map $T'\rightarrow T$ defined over $\F_q$, we have a natural isomorphism 
\begin{align*}
    (X\times_T T')^{(q/T')}&\cong X^{(q/T)}\times_T T'\\
    F_{X\times_T T' /T'} &=  F_{X/T} \times \mathbf{1}_{T'}
\end{align*}
Similarly, applying the same procedure with $F_T^m$ yields $X^{(q^m/T)}$ and the relative Frobenius $F_{X/T}^m$.
\begin{equation*}
    \begin{tikzcd}
X \arrow[rrd, "F_X^m", bend left] \arrow[rdd, "f"', bend right] \arrow[rd, "F_{X/T}^m", dashed] &                             &                  \\
                                                                                            & X^{(q^m/T)} \arrow[r] \arrow[d] & X \arrow[d, "f"] \\
                                                                                            & T \arrow[r, "F_T^m"]          & T               
\end{tikzcd}
\end{equation*}
When $T$ is understood, one can simply write $X^{(q^m)}$ instead of $X^{(q^m/T)}$.

Relative Frobenius is a universal homeomorphism. Thus, for any map $\alpha : X\rightarrow X $ of $T$-schemes there exists a unique map $\beta:X^{(q)}\rightarrow X^{(q)}$ of $T$-schemes such that
\begin{equation*}
 \begin{tikzcd}
X \arrow[rrr, "\alpha"] \arrow[dd, "F_{X/T}"'] &  &  & X \arrow[dd, "F_{X/T}"] \\
                                               &  &  &                         \\
X^{(q)} \arrow[rrr, "\beta"]                   &  &  & X^{(q)}                
\end{tikzcd}
\end{equation*}

In our setting, when the surface $S$ is isomorphic to $D\times _{\F_q} C$ for some smooth curve $D$ over $\F_q$ we have :

\begin{align*}
S^{(q/C)}\cong (D\times_{\F_q} C)^{(q/C)} &\cong D^{(q/\F_q)} \times_{\F_q} C \cong D\times_{\F_q} C  \\
F_{S/C}=F_{D\times_{\F_q} C  / C } &= F_{D/\F_q \times \mathbf{1}_C}
\end{align*}

So the relative Frobenius acts on the surface $S$ itself and if $\alpha :S\rightarrow S$ is an element of $\Aut_C(S)$, then there exists a map $\beta:S\rightarrow S$ such that the following diagram commutes.
\begin{equation}
\begin{tikzcd}
S \arrow[rrr, "\alpha"] \arrow[dd, "F_{S/C}"'] &  &  & S \arrow[dd, "F_{S/C}"] \\
                                               &  &  &                         \\
S \arrow[rrr, "\beta"]                         &  &  & S                      
\end{tikzcd}
\end{equation}

We refer to $\beta$ as the action of the Frobenius on the automorphism $\alpha$ and write it as $\alpha^F$. Note that $\mathbf{1}_S^F $ is just $\mathbf{1}_S $, so if for some $A,B\in \Omega_n(S/C)$ we have $A\sim B$, then $F\cdot A\sim F\cdot B$.

\subsection{Projective Space Over Dedekind Domains}\label{Proj space pver dede}

We recall the structure of $\mathbb{P}^1$-bundles over Dedekind domains, which is crucial for understanding models of genus $0$ curves. When the generic fiber of our surface has genus $0$ and contains at least one rational point, it is isomorphic to $\mathbb{P}^1$. In this case, the surface itself is a $\mathbb{P}^1$-bundle over the affine base curve.

Let $R$ be a Dedekind domain and set $X = \mathrm{Spec}\,R$. Suppose $Y$ is a $\mathbb{P}^1$-bundle over $X$, i.e., we have a morphism $\pi: Y \to X$ whose fibers are isomorphic to $\mathbb{P}^1$. Then $Y$ is the projectivization of a rank $2$ vector bundle over $X$. Two such vector bundles differing by tensoring with a line bundle produce isomorphic projectivizations.

Since $X$ is affine, every rank $2$ vector bundle corresponds to a torsion-free $R$-module $M$ of rank $2$. By the structure theorem for finitely generated modules over Dedekind domains, $M$ decomposes as
\[
M \cong I \oplus J
\]
for some fractional ideals $I, J$ of $R$. Because $Y$ is determined up to line bundle twists, we may, without loss of generality, take $J = R$. Furthermore, if $I$ and $I'$ represent the same class in the ideal class group of $R$, then the corresponding $\mathbb{P}^1$-bundles are isomorphic.

Let $I$ be an integral $R$-ideal and let $S$ be the projectivization of  $R\, \oplus\, I$ over $R$. Then the automorphism group of $S$ over $ R$ is given by 

\begin{equation*}
    \operatorname{Aut}_R(S) \cong \Biggl\{ \begin{pmatrix} \alpha & \beta \\ \gamma & \delta \end{pmatrix} : \alpha,\delta\in R,\ \beta\in I^{-1},\ \gamma\in I,\ \alpha\delta-\beta\gamma\in R^\times \Biggr\}.
\end{equation*}

Let $K $ be the fraction field of $R$. This automorphism group acts on $\P^1(K)$ given by
\[
\begin{pmatrix} \alpha & \beta \\ \gamma & \delta \end{pmatrix} \cdot [a:b] = [\alpha a+\beta b: \gamma a+\delta b]
\]
for a point $[a:b]\in \P^1(K)$.
\begin{lemma}\label{annoying case}
    There is an injection 
    \[
     \operatorname{Aut}_R(S) \backslash \P^1(K)  \xhookrightarrow{\hspace{1cm}}\Cl(R)
    \]
    where $\Cl(R)$ is the ideal class group of $R$.
\end{lemma}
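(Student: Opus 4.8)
The plan is to realize $\Aut_R(S)$ as the group $G=\Aut_R(M)$ of $R$-linear automorphisms of the rank-two lattice $M=R\oplus I\subset K^2$, which is exactly the matrix group displayed just above the lemma. Since the scalar matrices $R^\times$ act trivially on $\P^1(K)$, passing between the $\operatorname{GL}$-description and its $\operatorname{PGL}$-quotient does not change orbits, so I may work with $G$ directly. Every point $[a:b]\in\P^1(K)$ determines the line $\ell=K\cdot(a,b)\subset K^2$, depending only on the class $[a:b]$ and not on the representative. The invariant I attach to it is the intersection lattice $\mathfrak{a}_\ell:=M\cap\ell$.

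First I would verify that $\mathfrak{a}_\ell$ is a rank-one saturated sublattice: it is nonzero because $M$ is a full lattice in $K^2$, and $M/\mathfrak{a}_\ell$ is torsion-free because $\ell$ is a $K$-subspace; over the Dedekind domain $R$ this forces $\mathfrak{a}_\ell$ to be invertible, hence it defines a class $[\mathfrak{a}_\ell]\in\Cl(R)$ (concretely $\mathfrak{a}_\ell\cong a^{-1}R\cap b^{-1}I$). This produces a map $\Phi:\P^1(K)\to\Cl(R)$, $[a:b]\mapsto[M\cap\ell]$. For $g\in G$ the $K$-linear extension $g_K$ satisfies $g_K(M)=M$ and carries $\ell$ to a line $\ell'=g_K(\ell)$; as $g_K$ is bijective, $g_K(M\cap\ell)=M\cap\ell'$, and its restriction is an $R$-isomorphism $\mathfrak{a}_\ell\cong\mathfrak{a}_{\ell'}$. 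Hence $\Phi$ is constant on $G$-orbits and descends to $\bar\Phi:\Aut_R(S)\backslash\P^1(K)\to\Cl(R)$.

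The heart of the argument is injectivity of $\bar\Phi$. Suppose $[\mathfrak{a}_\ell]=[\mathfrak{a}_{\ell'}]$ for two lines $\ell,\ell'$. Taking top exterior powers of $0\to\mathfrak{a}_\ell\to M\to M/\mathfrak{a}_\ell\to0$ gives $\bigwedge^2 M\cong\mathfrak{a}_\ell\otimes(M/\mathfrak{a}_\ell)$, and since $\bigwedge^2(R\oplus I)\cong I$ this reads $[\mathfrak{a}_\ell]\cdot[M/\mathfrak{a}_\ell]=[I]$ in $\Cl(R)$; the same relation holds for $\ell'$. Cancelling in $\Cl(R)$, the hypothesis $[\mathfrak{a}_\ell]=[\mathfrak{a}_{\ell'}]$ forces both $\mathfrak{a}_\ell\cong\mathfrak{a}_{\ell'}$ and $M/\mathfrak{a}_\ell\cong M/\mathfrak{a}_{\ell'}$. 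Because $R$ is Dedekind the two exact sequences split, so I can write internal decompositions $M=\mathfrak{a}_\ell\oplus\mathfrak{c}=\mathfrak{a}_{\ell'}\oplus\mathfrak{c}'$ with $\mathfrak{c}\cong M/\mathfrak{a}_\ell$ and $\mathfrak{c}'\cong M/\mathfrak{a}_{\ell'}$. Choosing $R$-isomorphisms $\mathfrak{a}_\ell\xrightarrow{\sim}\mathfrak{a}_{\ell'}$ and $\mathfrak{c}\xrightarrow{\sim}\mathfrak{c}'$ and combining them yields an automorphism $g\in G$ with $g(\mathfrak{a}_\ell)=\mathfrak{a}_{\ell'}$; extending $K$-linearly gives $g_K(\ell)=K\cdot g(\mathfrak{a}_\ell)=\ell'$, so the two points lie in a single $\Aut_R(S)$-orbit.

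I expect the main obstacle to be precisely this last step: one must match not only the subline classes but also the complementary quotient classes, and it is the determinant relation $[\mathfrak{a}_\ell]\cdot[M/\mathfrak{a}_\ell]=[I]$ that guarantees the latter match. Without it, the chosen isomorphisms would assemble only into an automorphism of the generic fiber $K^2$, not of the integral lattice $M$. The remaining items are routine verifications that rely on the invertibility of $I$ over the Dedekind domain $R$: that $\mathfrak{a}_\ell$ is saturated and invertible, and that the displayed matrix group genuinely equals $\Aut_R(M)$.
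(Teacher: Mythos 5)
Your proof is correct, and it takes a genuinely different route from the paper's. The paper argues entirely by explicit ideal and matrix computation: it sends $[a:b]$ to the class of $aI+bR$, checks invariance under the displayed matrix group by manipulating fractional ideals, and proves injectivity by choosing Bezout-type relations $av+bu=1$ and $\lambda a'v'+\lambda b'u'=1$, forming two explicit determinant-one matrices, and verifying entry by entry that their quotient lies in $\Aut_R(S)$ and carries $[a':b']$ to $[a:b]$. You instead run a structural argument with the lattice $M=R\oplus I$. Note first that your invariant agrees with the paper's up to a bijection of the target: by the identity $(J_1\cap J_2)(J_1+J_2)=J_1J_2$ for fractional ideals, one gets $[M\cap\ell]=[I]\cdot[aI+bR]^{-1}$, so the two maps differ by the bijection $x\mapsto [I]\,x^{-1}$ of $\Cl(R)$ and injectivity of one is equivalent to injectivity of the other (your formula $a^{-1}R\cap b^{-1}I$ tacitly assumes $a,b\neq 0$, but the definition $M\cap\ell$ covers the two remaining points). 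Your equivariance step is essentially free ($g(M\cap\ell)=M\cap g_K(\ell)$, versus the paper's computation that $(ax+by)I+(az+bt)R=aI+bR$), and your injectivity step replaces the paper's explicit matrices by Steinitz-type module theory: the exterior-power relation $[\mathfrak{a}_\ell]\cdot[M/\mathfrak{a}_\ell]=[I]$ forces the quotient classes to match, the sequences split because the quotients are projective, and an automorphism of $M$ is assembled from isomorphisms of the two rank-one summands. What each approach buys: the paper's proof is elementary and self-contained, using nothing beyond ideal arithmetic in a Dedekind domain; yours is conceptually cleaner, isolates exactly why matching the subline class suffices (the determinant constraint handles the complementary class, as you correctly emphasize), and generalizes with no extra effort to higher-rank lattices and $\P^n$-bundles, where the explicit matrix construction would be painful.
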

\begin{proof}
   This proof is inspired by \cite{conradSL2}. There is a well-defined map $f:\P^1(K) \ra \Cl(R)$ such that the element $[a:b]\in\P^1(K)$ maps to the fractional ideal $aI+bR$ in the class group. Let $\phi=\begin{pmatrix} x & y \\ z& t \end{pmatrix} $ be an element of $\Aut_R(S)$. Then 
\[
\phi \cdot [a:b] = [ax+by:az+bt].  
\]
Moreover, $xt-yz$ is a unit in $R$, so $xR+zI^{-1}= yI+tR=R$ and $(ax+by)I+(az+bt)R =aI+bR$ as fractional ideals.
Therefore, $f$ induces a map from the orbits $ \operatorname{Aut}_R(S) \backslash \P^1(K)$ to $\Cl(R)$. 

Assume $f([a:b]) = f([a':b'])$. Then there exists $\lambda \in K$ such that $J := aI+bR = \lambda(a'I+b'R)$. Now we get
\[
R= (aI+bR)J^{-1} = aIJ^{-1} +bJ^{-1}
\]
and there exist $v\in IJ^{-1}$ and $u\in J^{-1}$ such that $av+bu=1$. Similarly, there exist $v'\in IJ^{-1}$ and $u'\in J^{-1}$ such that $\lambda a'v'+\lambda
b'u'=1$.

The matrices
\[
M=\begin{pmatrix} a & -u \\ b& v \end{pmatrix} \quad\quad  N=\begin{pmatrix}\lambda a'& -u' \\ \lambda b'& v' \end{pmatrix}
\]
have determinant $1$ and the matrix
\[
MN^{-1} = \begin{pmatrix} 
av'+\lambda b'u 
& au'-\lambda a'u 
\\ bv'-\lambda b'v
& bu'+\lambda a' v 
\end{pmatrix}
\]
 maps $[a':b']$ to $[a:b]$. Moreover, it has determinant $1$ and
 \[
 av'+\lambda b'u,\,  bu'+\lambda a' v\in R, \quad au'-\lambda a'u\in I^{-1},\quad bv'-\lambda b'v\in I
 \]
Hence, $MN^{-1}$ is an element of $\Aut_R(S)$, which implies that $f$ induces an injection 
\[
 \operatorname{Aut}_R(S) \backslash \P^1(K)  \xhookrightarrow{\hspace{1cm}}\Cl(R)
\]
\end{proof}

\subsection{Conic Bundles and Azumaya Algebras}\label{conic bundle sections}

This section follows \cite{milne1980etale} and \cite{de2006separable}.

\begin{definition}(Azumaya algebras)
\begin{itemize}
    \item Let $R$ be a commutative local ring. Let $A$ be an algebra over $R$ with an identity element $1$ such that the map $R\ra A$, $r\mapsto r\cdot 1$, identifies $R$ with a subring of the center of $A$. Let $A^{\mathrm{op}}$  denote the opposite algebra to $A$, that is, the algebra with the multiplication reversed. $A$ is an Azumaya algebra over $R$ if it is free of finite rank as a $R$-module and if the map $A\otimes_R A^{\mathrm{op}} \ra \text{End}_{R}(A)$ that sends $a\otimes a'$ to an endomorphism $(x\mapsto axa')$ is an isomorphism.
\item Let $X$ be a scheme. An $\mathcal{O}_X$-algebra $A$ is an Azumaya algebra over $X$ if it is coherent as an $\O_X$-module and if, for all closed points $x$ of $X$, $A_x$ is an Azumaya algebra over the local ring $\O_{X,x}$  
\end{itemize}
\end{definition}

Two Azumaya algebras $A$ and $A'$ over $X$ are said to be \emph{similar} if there exist locally free $\O_X$-modules $E$ and $E'$, of finite rank over $\O_X$, such that there is an isomorphism 
\[
A\otimes_{\O_X} \operatorname{End}_{\O_X}(E) \approx A'\otimes_{\O_X} \operatorname{End}_{\O_X}(E')
\]
The similarity relation is an equivalence relation, because $\operatorname{End}(E) \cong \operatorname{End}(E \otimes E')$. The tensor product of two Azumaya algebras is an Azumaya algebra, and this operation is compatible with the similarity relation. The set of similarity classes of Azumaya algebras on $X$ becomes a group under the operation $[A][A'] = [A \otimes A']$, the identity element is $[\mathcal{O}_X]$, and $[A]^{-1} = [A^{\mathrm{op}}]$. This is the \emph{Brauer group} $\operatorname{Br}(X)$ of $X$. 
Also, the \emph{cohomological Brauer group } of \( X \) is defined by
\[
\mathrm{Br}'(X) := H^2_{\text{\'et}}(X, \mathbb{G}_m).
\]

A \emph{conic bundle} over a scheme $X$ is a scheme $\pi: S \to X$ that is étale locally isomorphic to the projective line; that is, there exists an étale cover $\{U_i \to X\}$ such that
\[
S \times_X U_i \cong \mathbb{P}^1_{U_i} := \mathbb{P}^1 \times U_i
\]
as $U_i$-schemes. In other words, $S$ is a form of $\mathbb{P}^1$ over $X$ in the étale topology.

\begin{lemma}
    Isomorphism classes of conic bundles over a scheme $X$ are classified by the cohomology set $H^1_{\text{\'et}}(X, \PGL_2)$.
\end{lemma}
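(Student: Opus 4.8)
The plan is to establish a bijection between isomorphism classes of conic bundles over $X$ and the nonabelian cohomology set $H^1_{\text{\'et}}(X, \PGL_2)$, following the standard descent/twisting formalism. The starting point is the observation that a conic bundle $\pi: S \to X$ is, by definition, an \'etale-local form of $\mathbb{P}^1_X$: there is an \'etale cover $\{U_i \to X\}$ on which $S$ becomes isomorphic to $\mathbb{P}^1_{U_i}$. The key input is that the automorphism group scheme of $\mathbb{P}^1$ over any base is $\PGL_2$; that is, $\Aut_{U}(\mathbb{P}^1_U) \cong \PGL_2(U)$, which follows from the fact that automorphisms of $\mathbb{P}^1$ over a field are exactly the M\"obius transformations and this extends in families. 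With this identification in hand, the conic bundle $S$ is a twisted form of the fixed object $\mathbb{P}^1_X$ with structure group $\PGL_2$, and the general theory of descent for forms applies directly.

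First I would make precise the passage from a form to a cocycle. Given $S$ with trivializing data $\varphi_i : S \times_X U_i \xrightarrow{\sim} \mathbb{P}^1_{U_i}$, on the overlaps $U_{ij} = U_i \times_X U_j$ the composite $g_{ij} := \varphi_i \circ \varphi_j^{-1}$ is an automorphism of $\mathbb{P}^1_{U_{ij}}$, hence an element $g_{ij} \in \PGL_2(U_{ij})$. These satisfy the cocycle condition $g_{ij} g_{jk} = g_{ik}$ on triple overlaps, so they define a class in the \v{C}ech cohomology set $\check{H}^1(\{U_i\}, \PGL_2)$, and passing to the limit over \'etale covers yields a well-defined class in $H^1_{\text{\'et}}(X, \PGL_2)$. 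I would check that changing the trivializations $\varphi_i$ by automorphisms, or refining the cover, alters the cocycle only by a coboundary, so that the resulting cohomology class depends only on the isomorphism class of $S$. This produces a well-defined map from isomorphism classes of conic bundles to $H^1_{\text{\'et}}(X, \PGL_2)$.

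Next I would construct the inverse. Given a cocycle $(g_{ij})$ valued in $\PGL_2$ for an \'etale cover $\{U_i \to X\}$, I would glue the pieces $\mathbb{P}^1_{U_i}$ along the overlaps using the $g_{ij}$ as transition isomorphisms; the cocycle condition guarantees the gluing is consistent, and by \'etale descent for (quasi-projective) schemes the resulting object descends to a scheme $S \to X$ that is \'etale-locally $\mathbb{P}^1$, i.e.\ a conic bundle. One then verifies that cohomologous cocycles yield $X$-isomorphic conic bundles, and that the two constructions are mutually inverse on isomorphism classes. The formal bookkeeping here is the standard equivalence, for a smooth affine group scheme $G$ acting on an object $Z$, between twisted forms of $Z$ and $H^1_{\text{\'et}}(X, \underline{\Aut}(Z))$; the content specific to our situation is simply that $\underline{\Aut}_X(\mathbb{P}^1_X) = \PGL_2$.

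The main obstacle I anticipate is not conceptual but technical: ensuring that \'etale descent actually produces a scheme rather than merely an algebraic space. This requires an effectivity-of-descent argument, which is valid here because $\mathbb{P}^1_{U_i}$ is (relatively) projective, and descent is effective for quasi-projective morphisms along the relevant fppf/\'etale covers. The second point requiring care is the nonabelian nature of $H^1_{\text{\'et}}(X, \PGL_2)$: it is only a pointed set, not a group, so I would be careful to phrase everything in terms of bijections of pointed sets and to track the distinguished element (the trivial bundle $\mathbb{P}^1_X$ itself) correctly. With effectivity of descent and the identification $\underline{\Aut}_X(\mathbb{P}^1_X) \cong \PGL_2$ in place, the bijection follows from the general descent machinery.
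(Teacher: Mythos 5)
Your proposal is correct and follows essentially the same route as the paper: identify conic bundles as \'etale-local twisted forms of $\mathbb{P}^1_X$ with automorphism sheaf $\PGL_2$, pass to transition cocycles, and invoke the correspondence between forms and $H^1_{\text{\'et}}(X,\PGL_2)$. In fact your write-up is more careful than the paper's three-sentence sketch, since you also flag the two points the paper glosses over --- effectivity of \'etale descent (which holds here because a form of $\mathbb{P}^1$ carries the canonical relatively ample sheaf $\omega^{-1}$, so the descent datum is effective) and the fact that the correspondence is a bijection of pointed sets rather than groups.
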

\begin{proof}

Since the automorphism group of $\mathbb{P}^1$ over a field is $\PGL_2$, the transition functions used to glue the local trivializations are valued in $\PGL_2$. Therefore, conic bundles over $X$ correspond to $\PGL_2$-torsors on $X$ for the étale topology. Such torsors are classified up to isomorphism by the nonabelian cohomology set $H^1_{\text{\'et}}(X, \PGL_2)$. 
Thus, the set of isomorphism classes of conic bundles over $X$ is in bijection with $H^1_{\text{\'et}}(X, \PGL_2)$.
    
\end{proof}

\begin{lemma}\cite[IV, Thm $2.5$, Step $1$]{milne1980etale}
    The set of isomorphism classes of Azumaya algebras of rank $n^2$ over $X$ is in bijection with $H^1_{\text{\'et}}(X,\PGL_n)$. 
\end{lemma}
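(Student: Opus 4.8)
The plan is to recognize this statement as an instance of the general principle that twisted forms of a fixed object, taken in the étale topology, are classified by the first nonabelian cohomology set of its automorphism sheaf. Here the fixed object is the trivial Azumaya algebra $M_n(\O_X)$ of rank $n^2$, and the assertion will follow once I establish two facts: first, that every rank-$n^2$ Azumaya algebra over $X$ becomes isomorphic to $M_n$ after an étale base change, so that the rank-$n^2$ Azumaya algebras are \emph{exactly} the étale twisted forms of $M_n$; and second, that the sheaf of $\O_X$-algebra automorphisms of $M_n$ is precisely $\PGL_n$. Granting these, the cocycle description of $H^1_{\text{ét}}(X,\PGL_n)$ and the usual descent machinery for étale sheaves produce the bijection: a rank-$n^2$ Azumaya algebra, together with a trivializing cover and the transition automorphisms between local splittings, determines a $\PGL_n$-valued $1$-cocycle, and conversely such a cocycle lets one glue copies of $M_n$ into an Azumaya algebra, with the two constructions inverse to one another up to the relevant equivalences.

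For the automorphism computation I would work étale-locally, where it suffices to determine $\Aut_{R\text{-alg}}(M_n(R))$ for $R$ a local ring, reducing if convenient to the strictly Henselian case. By the Skolem--Noether theorem every $R$-algebra automorphism of $M_n(R)$ is inner, that is, of the form $a\mapsto uau^{-1}$ for some $u\in\GL_n(R)$, and two units induce the same automorphism exactly when they differ by a central scalar in $\G_m$. Sheafifying, the automorphism sheaf of $M_n$ is the quotient $\GL_n/\G_m=\PGL_n$, which is exactly the group appearing in the statement.

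The technical heart of the argument is the local triviality claim. Given an Azumaya algebra $A$ of rank $n^2$ and a closed point $x\in X$, I pass to the strict Henselization $\O_{X,x}^{\mathrm{sh}}$, whose residue field $k$ is separably closed. The fibre $A\otimes k$ is a central simple $k$-algebra of dimension $n^2$, hence isomorphic to $M_n(k)$ since $\operatorname{Br}(k)$ is trivial for $k$ separably closed. The real work is to lift this splitting: one shows that the chosen isomorphism $A\otimes k\cong M_n(k)$ deforms over the Henselian local ring, which I would carry out either via the infinitesimal lifting criterion together with the smoothness of the scheme of algebra isomorphisms $A\to M_n$ (a torsor under the smooth group $\PGL_n$), or by a direct Hensel-type successive approximation. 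Smoothness is precisely what guarantees that a solution over $k$ propagates to the whole Henselian ring and then descends to an honest étale neighbourhood of $x$; thus $A$ trivializes on an étale cover of $X$.

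With both ingredients secured, I conclude by invoking the descent formalism exactly as in the proof of the analogous classification of conic bundles recalled above: since the rank-$n^2$ Azumaya algebras are precisely the étale forms of $M_n$ and $\underline{\Aut}(M_n)=\PGL_n$, their isomorphism classes are in bijection with $H^1_{\text{ét}}(X,\PGL_n)$. I expect the lifting of the residual splitting to be the only genuinely delicate point; the Skolem--Noether identification and the formal descent bookkeeping are routine once that step is in hand.
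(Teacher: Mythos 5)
The paper offers no proof of this lemma at all: it is imported verbatim from Milne \cite[IV, Thm 2.5, Step 1]{milne1980etale}, and the paper's surrounding text only uses it (together with the short torsor argument it does give for conic bundles) to link conic bundles with rank-$4$ Azumaya algebras. Your proposal is correct and is essentially the proof from the cited source itself — étale-local triviality via strict henselization and triviality of the Brauer group of a separably closed residue field (with the splitting lifted over the henselian ring by smoothness of the $\PGL_n$-torsor of isomorphisms), Skolem--Noether identifying the automorphism sheaf of $M_n$ with $\PGL_n=\GL_n/\G_m$, and the standard twisted-forms descent formalism — so there is nothing to fault beyond noting that the gluing step tacitly uses effectivity of étale descent for quasi-coherent algebras, which is standard.
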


By this lemma, studying conic bundles over a scheme $X$ is closely related to studying Azumaya algebras of rank $4$ over $X$, since both are classified by the same cohomology set $H^1_{\text{\'et}}(X, \PGL_2)$. 

\begin{theorem}\cite[IV, Thm $2.5$]{milne1980etale}
    There is a canonical injective homomorphism
    \[
    \mathrm{Br}(X) \to H^2_{\text{\'et}}(X, \mathbb{G}_m).
    \]
\end{theorem}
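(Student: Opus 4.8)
The plan is to build the map one rank at a time and then show it descends to the Brauer group. For a fixed $n$, the central extension of étale sheaves of groups
\[
1 \to \G_m \to \GL_n \to \PGL_n \to 1,
\]
in which $\G_m$ sits in $\GL_n$ as the scalar matrices and is therefore central, gives rise to a boundary map in nonabelian cohomology
\[
\delta_n : H^1_{\text{\'et}}(X,\PGL_n) \to H^2_{\text{\'et}}(X,\G_m).
\]
Because $\G_m$ is central, the target is ordinary abelian cohomology, and $\delta_n(\alpha)$ is exactly the obstruction to lifting a $\PGL_n$-torsor to a $\GL_n$-torsor. Composing with the bijection of the earlier lemma identifying rank-$n^2$ Azumaya algebras with $H^1_{\text{\'et}}(X,\PGL_n)$, I attach to each Azumaya algebra $A$ of rank $n^2$ a class $\delta(A)\in H^2_{\text{\'et}}(X,\G_m)$.

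The crux is to verify that this assignment respects both the similarity relation and the tensor product, so that it yields a well-defined group homomorphism. The tensor product of Azumaya algebras corresponds to the Kronecker (block) product $\PGL_n\times\PGL_m\to\PGL_{nm}$, which lifts to $\GL_n\times\GL_m\to\GL_{nm}$ carrying the central $\G_m\times\G_m$ into $\G_m$ by multiplication. Chasing the boundary maps through this compatibility of central extensions gives the additivity
\[
\delta_{nm}(\alpha\cdot\beta) = \delta_n(\alpha) + \delta_m(\beta).
\]
For a locally free $\mathcal{O}_X$-module $E$, the algebra $\End_{\mathcal{O}_X}(E)$ corresponds to a class in $H^1_{\text{\'et}}(X,\PGL_n)$ that lifts to $H^1_{\text{\'et}}(X,\GL_n)$ (namely the class of $E$ itself), so $\delta(\End_{\mathcal{O}_X}(E))=0$ by exactness. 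Combining this with additivity, if $A\otimes\End(E)\cong A'\otimes\End(E')$ then $\delta(A)=\delta(A')$, so $\delta$ descends to a well-defined homomorphism $\mathrm{Br}(X)\to H^2_{\text{\'et}}(X,\G_m)$.

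Injectivity I would extract from the exactness of the nonabelian cohomology sequence
\[
H^1_{\text{\'et}}(X,\GL_n) \to H^1_{\text{\'et}}(X,\PGL_n) \xrightarrow{\delta_n} H^2_{\text{\'et}}(X,\G_m)
\]
at the middle term. If $[A]\in\mathrm{Br}(X)$ lies in the kernel, represent it by an Azumaya algebra $A$ of rank $n^2$ with class $\alpha$; then $\delta_n(\alpha)=0$ forces $\alpha$ to come from a class in $H^1_{\text{\'et}}(X,\GL_n)$, i.e. from a rank-$n$ vector bundle $E$, whence $A\cong\End_{\mathcal{O}_X}(E)$. Since $\End(F)\otimes\End(E)\cong\End(F\otimes E)$, such an $A$ is similar to $\mathcal{O}_X$, so $[A]=0$.

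The main obstacle I anticipate is making the boundary map $\delta_n$ fully rigorous and establishing the additivity formula: since $H^1_{\text{\'et}}(X,\PGL_n)$ is only a pointed set rather than a group, the identity $\delta_{nm}(\alpha\cdot\beta)=\delta_n(\alpha)+\delta_m(\beta)$ cannot come from a group structure on the source and must instead be read off from the gerbe-theoretic (or explicit Čech cocycle) description of the obstruction class, together with its compatibility with the Kronecker product of central extensions. The exactness statement used for injectivity is standard once $\delta_n$ is in place, so the conceptual weight sits in the compatibility of the coboundary with tensor products.
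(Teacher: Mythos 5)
Your argument is correct and is essentially the proof of \cite[IV, Thm 2.5]{milne1980etale}, which the paper cites without reproducing: the coboundary of the central extension $1 \to \G_m \to \GL_n \to \PGL_n \to 1$, additivity of $\delta$ via the compatibility of Kronecker products with the central $\G_m$'s, vanishing on $\operatorname{End}(E)$ to descend to similarity classes, and injectivity from exactness of $H^1_{\text{\'et}}(X,\GL_n) \to H^1_{\text{\'et}}(X,\PGL_n) \xrightarrow{\delta_n} H^2_{\text{\'et}}(X,\G_m)$. You also correctly flag the one real subtlety (additivity must be verified on \v{C}ech cocycles since $H^1_{\text{\'et}}(X,\PGL_n)$ is only a pointed set), so there is nothing to add.
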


The first step in classifying Azumaya algebras over a local ring \( R \) is to fix the generic isomorphism class, meaning we consider only those orders that lie in a single Azumaya algebra \( \Sigma \) over $K$. The following powerful result shows that, within such a class, the structure is rigid: all Azumaya \( R \)-orders are conjugate.

\begin{theorem}\cite[\MakeUppercase{\romannumeral 5}, Lem $2.7$]{de2006separable}\label{azumaya over local ring}
    Let \( R \) be a local principal ideal domain with quotient field \( K \), and let \( \Sigma \) be an Azumaya algebra over \( K \). If \( A \) and \( B \) are \( R \)-orders in \( \Sigma \) with \( B \) an Azumaya algebra over \( R \), then there exists an invertible element \( t \in \Sigma \) with
    \[
    A \subseteq t^{-1} B t.
    \]
    In particular, \( B \) is a maximal order.\footnote{The authors use the term central separable for what we call Azumaya}
\end{theorem}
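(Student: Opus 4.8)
The plan is to translate the statement into the language of Azumaya algebras and then split the count into two finiteness inputs: a Brauer-group computation controlling the generic fiber, and a class-number finiteness controlling the integral models of each fixed generic fiber. By the two lemmas above, isomorphism classes of conic bundles over $C$ are in bijection with $H^1_{\text{\'et}}(C,\PGL_2)$, equivalently with isomorphism classes of rank-$4$ Azumaya algebras over the coordinate ring $R := \O(C)$. Since $C$ is a smooth \emph{affine} curve over $\F_q$, the ring $R$ is a Dedekind domain with finite class group $\Cl(R)$, and its fraction field $K = \F_q(C)$ is a global function field. To each such Azumaya algebra $A$ I would associate its generic fiber $A_K := A \otimes_R K$, a central simple $K$-algebra of dimension $4$, i.e. a quaternion algebra (possibly split).

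First I would show that only finitely many generic fibers occur. The Brauer class of $A_K$ lies in $\operatorname{Br}(K)[2]$ (a rank-$4$ Azumaya class is $2$-torsion), and because $A$ is Azumaya over all of $C$, this class is unramified at every closed point of $C$; its residues are therefore supported on the finite set $\Delta$ of closed points of the smooth projective model $\overline{C}$ lying outside $C$. Using the residue sequence for the smooth projective curve together with the vanishing $\operatorname{Br}(\overline{C}) = 0$ over a finite field, the subgroup of $\operatorname{Br}(K)[2]$ unramified outside $\Delta$ embeds into $\bigoplus_{v\in\Delta}(\Q/\Z)[2]$, which is finite since $\Delta$ is finite. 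Hence only finitely many quaternion algebras $\Sigma_0$ over $K$ arise as $A_K$.

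Next, fixing one such $\Sigma_0$, I would bound the fiber, namely the set of Azumaya $R$-algebras $A$ with $A_K \cong \Sigma_0$ up to $R$-isomorphism. Here Theorem \ref{azumaya over local ring} is the key local input: at every closed point $v$ of $C$ the local ring $R_v$ is a local principal ideal domain, and any two Azumaya $R_v$-orders in $\Sigma_0 \otimes_K K_v$ are maximal, so if $A,B$ are two such orders then $A \subseteq t^{-1} B t$ forces $A = t^{-1}Bt$ by maximality, i.e. they are conjugate and hence isomorphic. Consequently all the $A$ above are locally isomorphic at every $v$; they form a single genus. The isomorphism classes within this genus are then parametrized by the class set (an adelic double coset) of the automorphism group scheme $\operatorname{Aut}(\Sigma_0) = \PGL_1(\Sigma_0)$, an inner form of $\PGL_2$ and in particular reductive, relative to the compact-open stabilizers of a fixed base order.

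The main obstacle is precisely the finiteness of this class set, where the local-to-global passage must be controlled: Theorem \ref{azumaya over local ring} collapses the local data into a single genus, but assembling this into a \emph{finite} global count requires the finiteness of the class number of $\PGL_1(\Sigma_0)$ over $K$. I would invoke the finiteness of class numbers of reductive groups over global function fields (Borel--Harder reduction theory); in the split case $\Sigma_0 = M_2(K)$ this reduces to the elementary fact that rank-$2$ projective $R$-modules up to line-bundle twist are classified by $\Cl(R)$ and so are finite in number, and in the general case to the finiteness of the class number of a quaternion $R$-order. Combining the finitely many possible generic fibers with the finitely many integral models of each, I obtain the finiteness of $H^1_{\text{\'et}}(C,\PGL_2)$, hence of conic bundles over $C$ up to isomorphism over $C$. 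Throughout I would emphasize that $C$ is affine: this is exactly what forces $\Cl(R)$ to be finite and keeps the relevant group reductive, rather than producing the infinitely many rank-$2$ bundles that exist on a projective curve.
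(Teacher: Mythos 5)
Your proposal does not prove the statement it was assigned. The statement here is the purely local algebra fact from de Meyer--Ingraham: over a local principal ideal domain $R$ with quotient field $K$, any $R$-order $A$ in an Azumaya $K$-algebra $\Sigma$ is conjugate into any Azumaya $R$-order $B$, so $B$ is maximal. What you have written instead is a proof sketch of Theorem \ref{f.m. conic bundle} (finiteness of conic bundles over $C$), and in the middle of it you explicitly \emph{invoke} Theorem \ref{azumaya over local ring} as ``the key local input.'' As a proof of the assigned statement this is circular: the one thing you were asked to establish is assumed as a black box, and nothing in your text addresses it. Note also that the paper itself offers no proof of this statement --- it is quoted with a citation to \cite{de2006separable} --- so the honest options were either to reproduce the source's argument or to supply one. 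A correct proof would run along the following lines: set $M := A \cdot B \subseteq \Sigma$, a full $R$-lattice which is a right $B$-module; since $B$ is Azumaya over the local ring $R$, Morita theory (via $B \otimes_R B^{\mathrm{op}} \cong \operatorname{End}_R(B)$) shows finitely generated projective right $B$-modules are free, so $M \cong B$ as right $B$-modules, i.e.\ $M = tB$ for some $t \in \Sigma^{\times}$; then $A \cdot tB \subseteq tB$ and $1 \in B$ give $t^{-1} A t \subseteq B$, and maximality of $B$ follows by comparing the lattices $B \subseteq A \subseteq t B t^{-1}$, which have the same discriminant.

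As a side remark: read as a proof of Theorem \ref{f.m. conic bundle}, your argument is essentially sound and closely parallels the paper's Section 4.1 --- finitely many generic fibers via $2$-torsion Brauer classes supported on $\overline{C} \setminus C$, then finiteness of each genus via class-number finiteness (you cite Borel--Harder reduction theory for $\PGL_1(\Sigma_0)$ where the paper runs a Zariski-cohomology computation with a flasque-sheaf argument before invoking Conrad's finiteness theorem). But that theorem was not the statement under review, and the submission leaves the assigned local conjugacy result entirely unproved.
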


\subsection{Noncommutative Cohomology}

For noncommutative group cohomology, we closely adhere to Chapter 27 of Milne's notes on algebraic groups, as presented in \cite{milne2014algebraic}.

\begin{definition}
    Let $G$ be a group. A $G$-group $A$ is a group $A$ with an action 
    \begin{equation*}
        (\sigma, a) \mapsto \sigma a : G \times A \rightarrow A
    \end{equation*}
of $G $ on the group $A$.
\end{definition}

 Let $A$ be a $G$-group. Then  $H^0(G,A) :=A^G$, the set of elements in $A$ fixed under the action of $G$, i.e.,
    \begin{equation*}
        H^0(G,A)=A^G=\{a\in A \mid \sigma a =a \,\,\,\, \forall \sigma \in G\}.
    \end{equation*}

\begin{definition}
    Let $A$ be a $G$-group. Define $Z(G,A)$, the set of $1$-cocycles, as follows:
    \begin{equation*}
        Z(G, A) = \{ f: G \rightarrow A | f(\sigma \tau ) = f(\sigma) \cdot \sigma f(\tau) \,\,\,\,\,\, \forall \sigma , \tau \in G \}.
    \end{equation*}    
\end{definition}
Two $1$-cocycles $f, g \in Z(G,A)$ are equivalent if there exists $c\in A$ such that 
 \begin{equation*}
     g(\sigma) = c^{-1}\cdot  f(\sigma)\cdot  \sigma c \,\,\,\,\,\,\,\,\,\,\,\,\,\forall \sigma \in G.
 \end{equation*}
This is an equivalence relation on the set of $1$-cocycles, and $H^1(G,A)$ is defined to be the set of equivalence classes of $1$-cocycles. In general, $H^1(G,A)$  is not a group unless $A$ is commutative, but it has a distinguished element, namely, the class of $1$-cocycles of the form $\sigma \mapsto b^{-1}\cdot \sigma b, \, b\in A$ (the principal $1$-cocycle).

When $  A$ is commutative, $H^i(G,A)$ coincides with the usual cohomology groups for
$i=0, 1$.

\begin{theorem}
    An exact sequence 
    \begin{equation}\label{exact seq}
       \begin{tikzcd}
    1 \arrow[r] & A \arrow[r, "u"] & B \arrow[r, "v"] & C \arrow[r] & 1
\end{tikzcd}
    \end{equation}
    of $G$-groups gives rise to an exact sequence of pointed sets
    \begin{equation*}
        \begin{tikzcd}
1 \arrow[r] & {H^0(G,A)} \arrow[r, "u^0"] & {H^0(G,B)} \arrow[r, "v^0"] & {H^0(G,C)} \arrow[r, "\delta"] & {H^1(G,A)} \arrow[r, "u^1"] & {H^1(G,B)} \arrow[r, "v^1"] & {H^1(G,C)}.
\end{tikzcd}
    \end{equation*}
    More precisely:
    \begin{itemize}
        \item The sequence $  \begin{tikzcd}
1 \arrow[r] & {H^0(G,A)} \arrow[r, "u^0"] & {H^0(G,B)} \arrow[r, "v^0"] & {H^0(G,C)} 
\end{tikzcd}$ is exact as a sequence of groups.
\item There is a natural action of $C^G$ on $H^1(G,A)$.
\item The map $\delta $ sends $c\in C^G$ to $1\cdot c$ where $1$ is the distinguished element of $H^1(G,A)$ 
\item The nonempty fibres of $u^1:H^1(G,A) \rightarrow H^1(G,B)$ are the orbits of $C^G$ on $H^1(G,A)$.

\item The kernel of $v^1$ is the quotient of $H^1(G,A)$ by the action of $C^G$.
 
\end{itemize}
\end{theorem}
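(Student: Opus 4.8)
The plan is to follow the standard machinery of nonabelian cohomology, as in Chapter~27 of \cite{milne2014algebraic}, verifying each assertion by explicit cocycle computations. Throughout, I identify $A$ with its image $u(A)\subset B$, so that $A$ becomes a normal subgroup of $B$ and $C\cong B/A$ as $G$-groups; this makes the various conjugations below land in $A$.

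First I would construct the maps. The maps $u^0,v^0$ are the restrictions of $u,v$ to the fixed-point subgroups, and $u^1,v^1$ are induced by post-composing a cocycle with $u,v$; one checks at once that these send cocycles to cocycles and respect the equivalence of cocycles. To define the connecting map $\delta$, given $c\in C^G$ I would choose any $b\in B$ with $v(b)=c$ (possible since $v$ is surjective) and set
\[
\delta(c)(\sigma)=b^{-1}\cdot\sigma b .
\]
The key point is that $v(b^{-1}\cdot\sigma b)=c^{-1}\cdot\sigma c=1$ because $c$ is $G$-fixed, so $b^{-1}\cdot\sigma b$ lies in $A$. A short computation shows $\sigma\mapsto b^{-1}\cdot\sigma b$ is a $1$-cocycle valued in $A$, and that replacing $b$ by another lift $ba$ with $a\in A$ changes it to $a^{-1}f(\sigma)\,\sigma a$, i.e.\ within its equivalence class; hence $\delta(c)\in H^1(G,A)$ is well defined.

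Next I would define the action of $C^G$ on $H^1(G,A)$: for $c\in C^G$ and $[f]\in H^1(G,A)$, lift $c$ to $b\in B$ as above and set $(c\cdot[f])(\sigma)=b^{-1}f(\sigma)\,\sigma b$. Since $v(b^{-1}f(\sigma)\sigma b)=c^{-1}\cdot\sigma c=1$, this again takes values in $A$; a direct check of the cocycle identity confirms $c\cdot[f]$ is a cocycle, and one verifies independence of the chosen lift and the axioms of a group action. Taking $[f]$ to be the distinguished class recovers $\delta(c)=1\cdot c$, which is the third bullet. Exactness of the $H^0$ part (as groups) and exactness at $H^0(G,C)$—that $\delta(c)$ is trivial precisely when $c$ lifts to an element of $B^G$—are then straightforward cocycle chases using the definitions above.

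The heart of the argument, and the step I expect to be the principal obstacle, is the analysis at the $H^1$ level, namely exactness at $H^1(G,A)$ and $H^1(G,B)$ together with the last two bullets describing the fibres of $u^1$ and the kernel of $v^1$. The difficulty is that $H^1$ is only a pointed set, so one cannot argue by kernels: a fibre of $u^1$ over a non-base point is not controlled directly by the sequence. The standard remedy, which I would carry out carefully, is the \emph{twisting} argument: to describe the fibre of $u^1$ over an arbitrary class, one twists the entire exact sequence $1\to A\to B\to C\to 1$ by a representing cocycle, producing a new exact sequence of $G$-groups to which the already-established base-point analysis applies. Tracking how the $C^G$-action and the connecting map transform under this twist shows simultaneously that the nonempty fibres of $u^1$ are exactly the $C^G$-orbits on $H^1(G,A)$ and that $\ker v^1$ equals the image of $u^1$, i.e.\ the quotient of $H^1(G,A)$ by the $C^G$-action. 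I anticipate the bookkeeping in this twisting step to be the only genuinely delicate part, all remaining verifications being routine diagram and cocycle chases.
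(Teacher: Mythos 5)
The paper states this theorem purely as background and gives no proof of its own, citing Chapter 27 of Milne's notes \cite{milne2014algebraic}; your proposal is precisely that standard argument (explicit cocycle constructions for $\delta$ and the $C^G$-action, then analysis of fibres), and it is correct. The only inaccuracy is your assessment of where the difficulty lies: for this particular statement the twisting machinery is dispensable, since applying $v$ to a coboundary relation $f_2(\sigma)=b^{-1}f_1(\sigma)\,\sigma b$ in $B$ shows directly that $c:=v(b)$ lies in $C^G$ and that $f_2=c\cdot f_1$ (which gives the fibres of $u^1$), while a similar direct computation — lifting a trivialization of $v\circ g$ and adjusting $g$ by that coboundary so it takes values in $A$ — identifies $\ker v^1$ with the image of $u^1$; twisting becomes genuinely necessary only for the paper's subsequent theorem describing the non-distinguished fibres of $v^1$.
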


For maps of pointed sets, the term "kernel" refers to the fiber over the distinguished element. This theorem describes only the fiber of $v^1$ that contains the class of principal $1$-cocycles. To describe the other fibers, we need to consider appropriate twists of the $G$ action. 

\begin{definition}
    Let $B $ be a $G$-group, and let $S$ be a $G$-set with a left action of $B$ compatible with the action of $G$. Let $f\in Z(G,B)$, and let $\prescript{}{f}{S}$ denote the set $S$ on which $G$ acts by 
    \begin{equation*}
        \sigma * s = f(\sigma)\cdot \sigma s \quad\quad\forall \sigma \in G.
    \end{equation*}
 We say $\prescript{}{f}{S}$ is obtained from $S$ by twisting by a $1$-cocycle $f$.
\end{definition}

Now consider an exact sequence (\ref{exact seq}), and let $f\in Z(G,B)$. The group $B$ acts on itself by inner automorphism leaving $A$ stable, and so we can twist (\ref{exact seq}) by $f$ to obtain an exact sequence 
\begin{equation*}
    \begin{tikzcd}
1 \arrow[r] & \prescript{}{f}{A} \arrow[r, "u"] & \prescript{}{f}{B} \arrow[r, "v"] & \prescript{}{f}{C} \arrow[r] & 1.
\end{tikzcd}
\end{equation*}
The next theorem describes the fiber of $v^1$ containing  $[f]\in H^1(G,B)$.

\begin{theorem}\label{coh theo}
    There is a commutative diagram
    \begin{equation}
        \begin{tikzcd}
{H^0(G,\prescript{}{f}{C} )} \arrow[r] & {H^1(G,\prescript{}{f}{A} )} \arrow[r] & {H^1(G,\prescript{}{f}{B} )} \arrow[d, "\cong"] \arrow[r] & {H^1(G,\prescript{}{f}{C} )} \arrow[d, "\cong"] \\
{H^0(G,C)} \arrow[r, "\delta"]         & {H^1(G,A)} \arrow[r, "u^1"]            & {H^1(G,B)} \arrow[r, "v^1"]                               & {H^1(G,C)}                                     
\end{tikzcd}
    \end{equation}
in which the vertical arrows map the distinguished elements in $H^1(G,\prescript{}{f}{B})$ and $H^1(G,\prescript{}{f}{C})$
    to the class of $[f]$ and $v^1([f])$.
\end{theorem}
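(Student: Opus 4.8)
The plan is to obtain both the top row and the two vertical isomorphisms from structures already in place, reducing the theorem to one explicit cocycle computation. First I would apply the preceding theorem on exact sequences of pointed sets not to (\ref{exact seq}) itself but to its twist by $f$. Since $B$ acts on $A$, $B$, and $C$ by inner automorphisms compatibly with the $G$-action, and since $A$ is normal in $B$, twisting by $f\in Z(G,B)$ yields a new short exact sequence of $G$-groups
\[
1 \to \prescript{}{f}{A} \to \prescript{}{f}{B} \to \prescript{}{f}{C} \to 1,
\]
whose associated long exact sequence of pointed sets is precisely the top row of the diagram; in particular that row is exact. It then remains only to construct the vertical bijections on the $\prescript{}{f}{B}$ and $\prescript{}{f}{C}$ columns and to check the square they bound commutes.

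I would define both vertical maps by right translation by the twisting cocycle. Writing $\sigma b$ for the given $G$-action, the twisted action on $B$ is $\sigma *_f b = f(\sigma)\,(\sigma b)\,f(\sigma)^{-1}$; I set $\theta_B(g)(\sigma) = g(\sigma)f(\sigma)$ for a twisted cocycle $g\in Z(G,\prescript{}{f}{B})$, and define $\theta_C$ by the same formula with $v(f)$ in place of $f$. The heart of the argument is the verification that $\theta_B$ takes twisted cocycles to ordinary ones: substituting the definition of $*_f$ into the twisted identity $g(\sigma\tau)=g(\sigma)\cdot(\sigma *_f g(\tau))$, the factor $f(\sigma)^{-1}f(\sigma)$ cancels after multiplying on the right by $f(\sigma\tau)=f(\sigma)\,\sigma f(\tau)$, and the remaining terms regroup via $(\sigma g(\tau))(\sigma f(\tau))=\sigma\bigl(g(\tau)f(\tau)\bigr)$ into the ordinary cocycle identity $\theta_B(g)(\sigma\tau)=\theta_B(g)(\sigma)\,\sigma\theta_B(g)(\tau)$. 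The identical substitution sends the twisted coboundary relation to the ordinary one, so $\theta_B$ descends to $H^1$; it is bijective with inverse given by translation by $f^{-1}$, and $\theta_C$ is handled verbatim. Since the distinguished (trivial) twisted cocycle maps under $\theta_B$ to $f$ and under $\theta_C$ to $v(f)$, the vertical arrows carry the distinguished elements of $H^1(G,\prescript{}{f}{B})$ and $H^1(G,\prescript{}{f}{C})$ to $[f]$ and $v^1([f])$, as asserted.

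Finally I would check commutativity of the right-hand square by a direct chase: starting from $[g]\in H^1(G,\prescript{}{f}{B})$, applying the twisted $v$ and then $\theta_C$ gives $[(v\circ g)\cdot v(f)]$, while applying $\theta_B$ and then $v^1$ gives $[v(g\cdot f)]$, and these coincide because $v$ is a group homomorphism. I expect the sole genuine obstacle to be the bookkeeping in the cocycle computation of the second paragraph: one must track the conjugating factors $f(\sigma)^{\pm1}$ in $*_f$ with care so that they telescope and the surviving $G$-action factor regroups correctly, after which every remaining verification is formal. Together, the exactness of the top row and the identification of the distinguished elements give the intended conclusion, that the fiber of $v^1$ through $[f]$ is governed by the image of $H^1(G,\prescript{}{f}{A})$.
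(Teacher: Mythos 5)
Your proposal is correct: the top row is obtained exactly as you say by applying the long-exact-sequence theorem to the twisted sequence $1 \to \prescript{}{f}{A} \to \prescript{}{f}{B} \to \prescript{}{f}{C} \to 1$, and the right-translation maps $g \mapsto g\cdot f$ and $g \mapsto g\cdot v(f)$ are the standard bijections, with your cocycle, coboundary, distinguished-element, and commutativity verifications all checking out. Note that the paper gives no proof of this statement at all — it is quoted as background from Milne's notes (and goes back to Serre's \emph{Galois Cohomology}) — and your argument is precisely the standard proof found in those references, so there is nothing to compare beyond that.
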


\begin{theorem}\cite[$5.8$]{serre1997galois}\label{inflation restriction}
  Let $H$ be a normal subgroup of $G$ and let $A$ be a $G$-group. The map $H^1(G/H, A^H) \rightarrow H^1 (G, A)$ given by the pull-back along $G\rightarrow G/H$ is injective
\end{theorem}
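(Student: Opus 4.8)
The plan is to unwind the explicit description of nonabelian $H^1$ recalled above and to show that the only way an inflated cocycle can become a coboundary over $G$ is for the conjugating element to already lie in $A^H$. Throughout I write $q\colon G\to G/H$ for the quotient map, so that the inflation map sends a cocycle $f\in Z(G/H,A^H)$ to $\tilde f := f\circ q\colon G\to A^H\subset A$.

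First I would check that the map is well defined. The preliminary point is that $A^H$ really is a $G/H$-group: for $a\in A^H$, $\sigma\in G$ and $h\in H$, normality of $H$ gives $h(\sigma a)=\sigma\bigl((\sigma^{-1}h\sigma)a\bigr)=\sigma a$, so $A^H$ is $G$-stable and $H$ acts trivially on it, whence $G$ acts on $A^H$ through $G/H$. Using that this $G/H$-action is induced from the $G$-action, the cocycle identity $f(\bar\sigma\bar\tau)=f(\bar\sigma)\cdot\bar\sigma f(\bar\tau)$ transports to $\tilde f(\sigma\tau)=\tilde f(\sigma)\cdot\sigma\tilde f(\tau)$, so $\tilde f\in Z(G,A)$ and inflation descends to cohomology classes.

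Next comes the heart of the argument. I would take two cocycles $f,f'\in Z(G/H,A^H)$ whose inflations are cohomologous in $H^1(G,A)$, so that there exists $c\in A$ with $f'(q(\sigma))=c^{-1}\,f(q(\sigma))\,\sigma c$ for all $\sigma\in G$. The key move is to restrict this identity to $\sigma=h\in H$. Since $q(h)=1$ and every cocycle is trivial at the identity (from $f(1)=f(1)\cdot f(1)$ one gets $f(1)=1$), both $f(q(h))$ and $f'(q(h))$ equal $1$, and the relation collapses to $1=c^{-1}\,h c$, i.e. $hc=c$ for all $h\in H$. Hence $c\in A^H$. Knowing this, the action of any $\sigma\in G$ on $c$ factors through $\bar\sigma=q(\sigma)$, so the coboundary relation reads $f'(\bar\sigma)=c^{-1}\,f(\bar\sigma)\,\bar\sigma c$ for all $\bar\sigma\in G/H$ with $c\in A^H$, which is precisely the equivalence of $f$ and $f'$ as cocycles for the $G/H$-group $A^H$. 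Thus $[f]=[f']$ in $H^1(G/H,A^H)$ and inflation is injective.

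The step I expect to require the most care is the extraction of $c\in A^H$: it hinges jointly on the fact that inflated cocycles are constant (equal to the identity) on $H$ and on the vanishing of cocycles at the identity element, and it is exactly here that normality of $H$ and the definition of the twisted coboundary relation interact. Everything else is bookkeeping with the cocycle and equivalence relations recalled earlier in this section.
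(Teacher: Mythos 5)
Your proof is correct. The paper does not prove this statement itself---it cites it directly from Serre's \emph{Galois Cohomology}---and your argument (check $A^H$ is a $G/H$-group via normality, inflate, evaluate the coboundary relation at $\sigma = h \in H$ using $f(1)=1$ to force $c \in A^H$, then descend the relation to $G/H$) is precisely the standard proof of that cited result.
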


For cohomology of sheaves of noncommutative groups, we will follow \cite[\MakeUppercase{\romannumeral 3}] {milne1980etale}.
Let $G$ be a sheaf of groups on $X$, written multiplicatively, and let $\mathcal{U} = (U_i \to X)_{i \in I}$ be a covering of $X$. A 1-cocycle for $\mathcal{U}$ with values in $G$ is a family $(g_{ij})_{i,j}$, $g_{ij} \in G(U_{ij})$, such that
\[
(g_{ij}|_{U_{ijk}})(g_{jk}|_{U_{ijk}}) = (g_{ik}|_{U_{ijk}}).
\]

Two cocycles $g$ and $g'$ are \emph{cohomologous} if there is a family $(h_i)_i$, $h_i \in G(U_i)$, such that $g'_{ij} = (h_i|_{U_{ij}})g_{ij}(h_j|_{U_{ij}})^{-1}$. This is an equivalence relation, and the set of cohomology classes is written $\hat{H}^1(\mathcal{U}/X, G)$. It is a set with a distinguished element $(g_{ij})$ where $g_{ij} = 1$ for all $i$ and $j$. 

The set ${H}^1_{\text{\'et}}(X, G)$ is defined to be $\varinjlim \hat{H}^1(\mathcal{U}/X, G)$ where the limit is taken over all \'etale coverings of $X$, and the set  ${H}^1_{\text{Zar}}(X, G)$ is defined to be $\varinjlim \hat{H}^1(\mathcal{U}/X, G)$ where the limit is taken over all Zariski open  coverings of $X$.
When $G$ is abelian, this definition coincides with the usual cohomology groups. 

\begin{prop}
To any exact sequence of sheaves of groups
\[
1 \xrightarrow{\quad\quad} G' \xrightarrow{\quad\quad} G \xrightarrow{\quad\quad} G'' \xrightarrow{\quad\quad} 1,
\]
there is an associated  exact sequence of pointed sets
\[
1 \xrightarrow{\quad} G'(X) \xrightarrow{\quad} G(X) \xrightarrow{\quad} G''(X) \xrightarrow{\quad} {H}^1_{\text{\'et}}(X, G') \xrightarrow{\quad} {H}^1_{\text{\'et}}(X, G) \xrightarrow{\quad} {H}^1_{\text{\'et}}(X, G'').
\]
\end{prop}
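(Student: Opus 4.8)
The plan is to construct the six maps explicitly at the level of \v{C}ech cocycles and then verify exactness at each of the five interior terms by direct cocycle manipulation, using throughout that $H^1_{\text{\'et}}(X,-)$ is the colimit of the \v{C}ech sets $\hat H^1(\mathcal U/X,-)$ over \'etale coverings $\mathcal U$. The first two maps $G'(X)\to G(X)\to G''(X)$ are the maps on global sections induced by $G'\to G\to G''$, and the maps $H^1_{\text{\'et}}(X,G')\to H^1_{\text{\'et}}(X,G)\to H^1_{\text{\'et}}(X,G'')$ are induced by pushing a cocycle $(g_{ij})$ forward along $G'\to G$ and $G\to G''$. The genuinely new ingredient is the connecting map $\delta\colon G''(X)\to H^1_{\text{\'et}}(X,G')$. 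Given $s\in G''(X)$, I would use that $G\to G''$ is an epimorphism of \'etale sheaves to choose a covering $\mathcal U=(U_i\to X)$ on which $s|_{U_i}$ lifts to some $\tilde s_i\in G(U_i)$; the sections $c_{ij}:=(\tilde s_i|_{U_{ij}})(\tilde s_j|_{U_{ij}})^{-1}$ then map to the identity in $G''$, hence lie in $G'(U_{ij})$, and satisfy $c_{ij}c_{jk}=c_{ik}$. Setting $\delta(s):=[(c_{ij})]$, I would check that this class is independent of the covering and of the lifts (any two choices of lift differ by a $G'$-valued coboundary), so that $\delta$ is a well-defined map of pointed sets.

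Exactness at $G'(X)$ and at $G(X)$ is formal: it follows from the left-exactness of the global-sections functor together with the identification of $G'$ with $\ker(G\to G'')$. For exactness at $G''(X)$, I would show that $\delta(s)$ is the distinguished class exactly when $s$ lifts to a global section of $G$. If $s$ lifts globally, one may take the $\tilde s_i$ to be restrictions of a single global lift, so that $c_{ij}=1$; conversely, if $(c_{ij})$ is a coboundary $c_{ij}=h_ih_j^{-1}$ with $h_i\in G'(U_i)$, then the modified sections $h_i^{-1}\tilde s_i$ agree on overlaps, glue to a global section of $G$, and map to $s$.

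The two $H^1$ terms are handled by cocycle manipulation. At $H^1_{\text{\'et}}(X,G')$, a class represented by $(a_{ij})$ with $a_{ij}\in G'(U_{ij})$ becomes trivial in $H^1_{\text{\'et}}(X,G)$ precisely when $a_{ij}=h_ih_j^{-1}$ for some $h_i\in G(U_i)$; since each $a_{ij}$ dies in $G''$, the images of the $h_i$ agree on overlaps and glue to a section $s\in G''(X)$, and one reads off $\delta(s)=[(a_{ij})]$, which gives exactness. At $H^1_{\text{\'et}}(X,G)$, the composite with the previous map is trivial because $G'\to G\to G''$ is; conversely, if $(g_{ij})$ has trivial image in $H^1_{\text{\'et}}(X,G'')$, then after refining the covering its image is a coboundary $\bar k_i\bar k_j^{-1}$, and using \'etale surjectivity of $G\to G''$ I would lift the $\bar k_i$ to $k_i\in G(U_i)$ and replace $(g_{ij})$ by the cohomologous cocycle $k_i^{-1}g_{ij}k_j$, which now takes values in $G'$ and exhibits the class as coming from $H^1_{\text{\'et}}(X,G')$.

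The steps I would treat most carefully are the well-definedness of $\delta$ and the repeated appeal to the fact that $G\to G''$ is surjective only \'etale-locally, so that every lift of a section of $G''$ --- the global section in the definition of $\delta$, and the coboundary data $\bar k_i$ in the final exactness argument --- forces a passage to a refinement of the covering. Since each $H^1_{\text{\'et}}(X,-)$ is a colimit over such refinements, these passages are harmless at the level of cohomology classes; but this is precisely where the nonabelian, \'etale-local bookkeeping must be done carefully, while the remainder is formal cocycle algebra.
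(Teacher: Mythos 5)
The paper does not prove this proposition at all: it is stated as background, with the proof deferred to the cited reference (Milne, \emph{\'Etale Cohomology}, Ch.~III), whose argument is exactly the \v{C}ech-cocycle construction you give. Your proof is correct --- the connecting map, its well-definedness, and the exactness checks at all five terms (including the refinement bookkeeping forced by the fact that $G\to G''$ is only an epimorphism of sheaves, not of presheaves) are handled properly --- so it supplies, correctly and by essentially the same route, the standard proof the paper omits.
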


\section{More on curves}\label{more on curves}
In this section, we establish several results on smooth curves that will be used in the proof of the main theorem.

 \begin{lemma}\label{bounded intersection of graph lemma}
    Let $C$ and $D$ be complete, smooth curves over a field $k$ and $X:=C\times D$. Let $f$ and $g$ be two distinct non-constant maps from $C$ to $D$ of degrees $d_f$ and $d_g$, respectively. Let $\Gamma_f$ and $\Gamma_g$ denote the graphs of $f$ and $g$ inside $X$. Then the intersection number $\Gamma_f\cdot \Gamma_g$ is bounded by constants depending only on $g_D$, $d_f$, and $d_g$. More precisely,
    \begin{equation*}
        d_f+d_g -\sqrt{4d_fd_gg_D^2} \le \Gamma_f\cdot \Gamma_g \le d_f+d_g +\sqrt{4d_fd_gg_D^2}.
    \end{equation*}  
\end{lemma}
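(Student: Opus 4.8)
The plan is to compute everything inside the Néron--Severi (equivalently, numerical) group of the smooth projective surface $X = C\times D$, and then extract the stated inequalities from the Hodge index theorem. Since intersection numbers are invariant under extension of the base field, I would first base-change to $\overline{k}$ so that genera and intersection multiplicities behave geometrically. Let $F_1 = \{pt\}\times D$ and $F_2 = C\times\{pt\}$ be the two fiber classes of the projections $p_1,p_2$. These satisfy $F_1^2 = F_2^2 = 0$ and $F_1\cdot F_2 = 1$, so they span a rank-two hyperbolic plane $P$ inside $\mathrm{Num}(X)_{\mathbb{R}}$. The first computational step is to record the intersection numbers of a graph with the fibers: because $\Gamma_f$ is a section of $p_1$, we have $\Gamma_f\cdot F_1 = 1$, while $\Gamma_f\cap(C\times\{d_0\})$ is exactly the fiber $f^{-1}(d_0)$, giving $\Gamma_f\cdot F_2 = d_f$ (and likewise for $g$).

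Next I would compute the self-intersection $\Gamma_f^2$ by adjunction on $X$. Using $K_X \equiv (2g_C-2)F_1 + (2g_D-2)F_2$ and the fact that $\Gamma_f\cong C$ has genus $g_C$, the adjunction formula $\Gamma_f^2 + K_X\cdot\Gamma_f = 2g_C-2$ yields $\Gamma_f^2 = 2d_f(1-g_D)$. The crucial device is then to split off the hyperbolic part: set
\[
D_f := \Gamma_f - d_f F_1 - F_2, \qquad D_g := \Gamma_g - d_g F_1 - F_2 .
\]
A direct check using the numbers above shows $D_f\cdot F_1 = D_f\cdot F_2 = 0$, so $D_f$ (and similarly $D_g$) lies in the orthogonal complement $P^{\perp}$, and a short expansion gives $D_f^2 = -2d_fg_D$ and $D_g^2 = -2d_g g_D$.

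The heart of the argument is the Hodge index theorem: since $P$ is a hyperbolic plane realizing the unique positive part of the signature, the intersection form is negative definite on $P^{\perp}$. Applying the Cauchy--Schwarz inequality to the positive definite form $-(\,\cdot\,)$ on $P^{\perp}$ gives
\[
(D_f\cdot D_g)^2 \le D_f^2\, D_g^2 = 4 d_f d_g g_D^2,
\]
hence $|D_f\cdot D_g|\le \sqrt{4d_fd_gg_D^2}$. Finally, expanding the product and using that $D_f,D_g$ are $P$-orthogonal,
\[
\Gamma_f\cdot\Gamma_g = (D_f + d_fF_1+F_2)\cdot(D_g + d_gF_1+F_2) = d_f + d_g + D_f\cdot D_g ,
\]
since the cross terms vanish and $(d_fF_1+F_2)\cdot(d_gF_1+F_2) = d_f+d_g$. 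Combining the last two displays gives exactly the claimed two-sided bound. I expect the main point requiring care to be the invocation of the Hodge index theorem, namely verifying that $P$ captures the single positive eigenvalue so that $P^{\perp}$ is genuinely negative definite; the distinctness hypothesis $f\neq g$ enters only to guarantee that $\Gamma_f$ and $\Gamma_g$ share no component, so that $\Gamma_f\cdot\Gamma_g$ is an honest (nonnegative) count of intersection points rather than a formal self-intersection.
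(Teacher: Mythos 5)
Your proof is correct, and it rests on the same two pillars as the paper's: the adjunction computation giving $\Gamma_f^2 = 2d_f(1-g_D)$ (identical in both), and the Hodge index theorem on $C\times D$. Where you differ is in how the index theorem is deployed. The paper never splits off the hyperbolic plane; instead it applies the Castelnuovo--Severi-type inequality $E^2 \le 2(E\cdot l)(E\cdot m)$ to the one-parameter family $E = \Gamma_f - k\Gamma_g$, obtaining a quadratic polynomial in $k$ that must be nonnegative, and then reads the bound off the discriminant (treating $g_D=0$ separately by plugging in $k=\pm1$). Your route --- writing $\Gamma_f = D_f + d_fF_1 + F_2$ with $D_f \in P^{\perp}$, computing $D_f^2 = -2d_fg_D$, and applying Cauchy--Schwarz for the negative definite form on $P^{\perp}$ --- is the structural version of the same estimate, and it buys two small things: the case $g_D = 0$ needs no separate treatment (there $D_f^2 = D_g^2 = 0$ forces $D_f\cdot D_g = 0$, giving the exact value $d_f+d_g$ at once), and it sidesteps a minor rigor point in the paper's write-up, where $k$ is restricted to integers although the discriminant conclusion requires nonnegativity of the quadratic over the reals (harmless there, since the underlying inequality holds for real classes, but your argument never raises the issue). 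Your closing remarks are also accurate: $P$ does contain a positive class (e.g.\ $(F_1+F_2)^2 = 2$), so $P^{\perp}$ is genuinely negative definite, and the hypothesis $f \neq g$ is irrelevant to the numerical bound itself --- it matters only when one wants $\Gamma_f\cdot\Gamma_g$ to dominate an honest count of intersection points, which is exactly how the lemma is used later in the paper (Lemma \ref{f. m. sep map} and Proposition \ref{frob control}).
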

\begin{proof}
    The proof uses several results from \cite[V.1]{hartshorne2013algebraic}. 
    Let $l=C\times \text{\{pt\}}$ and $m=\text{\{pt\}} \times D$. Then, the canonical divisor of $X$ is 
    \[K= (2g_D-2)l+(2g_C-2)m\] 
    where $g_C$ and $g_D$ denote the geometric genus of the curves $C$ and $D$. 
    Note that the graphs $\Gamma_f$ and $\Gamma_g$ are each isomorphic to $C$ and therefore have genus $g_C$. By the adjunction formula, we have
\begin{align*}
     \Gamma_f\cdot (\Gamma_f+K) &= 2g_C-2\\
     \implies \quad\Gamma_f\cdot \Gamma_f= 2g_C-2-\Gamma_f\cdot K = 2g_C-2-\big[ & d_f(2g_D-2) +2g_C-2 \big]=-d_f(2g_D-2).
\end{align*}
Similarly, we find 
\begin{equation*}
    \Gamma_g\cdot \Gamma_g= - d_g(2g_D-2). 
\end{equation*}
Let $k$ be an integer. Then 
\begin{align*}
    &   \quad \quad \quad \quad
    (\Gamma_f-k\Gamma_g)^2  \le 
    2((\Gamma_f-k\Gamma_g)\cdot l)((\Gamma_f-k\Gamma_g)\cdot m)\\
    &\implies\quad  \Gamma_f^2-2k\Gamma_f\cdot \Gamma_g + k^2\Gamma_g^2\le 2k^2d_g-2kd_f-2kd_g+2d_f\\
&\implies \quad \,\,\,\,\,\,
   0\le 2d_gg_Dk^2 + 2k(\Gamma_f\cdot \Gamma_g -d_f -d_g)+ 2d_fg_D
\end{align*}
If $g_D=0$, then evaluating the inequality at $k=\pm1$ gives the exact value 
\begin{equation*}
    d_f+d_g= \Gamma_f\cdot \Gamma_g.
\end{equation*}
If $g_D\ge 1$, then the quadratic polynomial above is positive definite, which implies
\begin{align*}
    (\Gamma_f\cdot \Gamma_g -d_f -d_g)^2&\le4d_fd_gg_D^2\\
    \implies\quad\quad d_f+d_g -\sqrt{4d_fd_gg_D^2} \le \Gamma_f\cdot \Gamma_g &\le d_f+d_g +\sqrt{4d_fd_gg_D^2}. 
\end{align*}
This completes the proof.
\end{proof}

\begin{lemma}\label{f. m. sep map}
   Let $C$ and $D$ be complete, smooth curves over $\F_q$. Then there are only finitely many separable morphisms $t:C \to D$ of fixed degree $d$.
\end{lemma}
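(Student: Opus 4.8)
The plan is to reinterpret each morphism as its graph inside the surface $X := C\times D$ and to exhibit all such graphs as $\F_q$-points of a single parameter scheme of finite type, after which finiteness becomes automatic because the ground field is finite. Retaining the notation of Lemma \ref{bounded intersection of graph lemma}, I would write $l = C\times\{\mathrm{pt}\}$ and $m=\{\mathrm{pt}\}\times D$, and for a separable morphism $t\colon C\to D$ of degree $d$ let $\Gamma_t\subset X$ be its graph. Since $\Gamma_t\cap l$ is the fibre $t^{-1}(\mathrm{pt})$ and $\Gamma_t\cap m$ is the single point $(\mathrm{pt},t(\mathrm{pt}))$, one reads off $\Gamma_t\cdot l = d$ and $\Gamma_t\cdot m = 1$. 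As $C,D$ are smooth and complete they are projective, so $X$ is a smooth projective surface, and on a product of curves the divisor $H:=l+m$ is ample (a standard fact: $H^2=2>0$ and $H\cdot\Gamma>0$ for every irreducible $\Gamma$). Hence every graph has the same fixed degree $\Gamma_t\cdot H = d+1$; moreover $\Gamma_t\cong C$ has arithmetic genus $g_C$ and self-intersection $\Gamma_t^2=-d(2g_D-2)$ as computed in Lemma \ref{bounded intersection of graph lemma}, so its Hilbert polynomial with respect to $\mathcal{O}_X(H)$ is the fixed polynomial $P(k)=(d+1)k+1-g_C$, independent of $t$.

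Second, all the graphs $\Gamma_t$ therefore lie in a single Hilbert scheme $\mathcal{H}:=\operatorname{Hilb}^{P}(X)$, a projective scheme of finite type over $\F_q$ (equivalently, they form a family of $1$-cycles of bounded degree $d+1$ in the Chow variety of $X$). Each separable degree-$d$ morphism $t$ defined over $\F_q$ determines an $\F_q$-rational point $[\Gamma_t]\in\mathcal{H}(\F_q)$, and the assignment $t\mapsto\Gamma_t$ is injective, since $t$ is recovered from its graph through the isomorphism $\mathrm{pr}_C\colon\Gamma_t\xrightarrow{\ \sim\ }C$.

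Finally, I would invoke the elementary but decisive fact that a scheme of finite type over the finite field $\F_q$ has only finitely many $\F_q$-rational points: covering $\mathcal{H}$ by finitely many affines $\operatorname{Spec} A_i$ with each $A_i$ a finitely generated $\F_q$-algebra, an $\F_q$-point is an $\F_q$-algebra homomorphism $A_i\to\F_q$, determined by the images of finitely many generators, each with at most $q$ possible values. Thus $\mathcal{H}(\F_q)$ is finite, hence so is the set of graphs, hence so is the set of morphisms $t$.

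The point that must not be glossed over — rather than a genuine obstacle — is that this finiteness is a real feature of the finite ground field: over $\overline{\F_q}$ or $\C$ there are in general infinitely many degree-$d$ maps (already $\mathbb{P}^1\to\mathbb{P}^1$), so the argument cannot rest on rigidity of $\mathcal{H}$ but only on finiteness of $\mathcal{H}(\F_q)$. The only substantive things to verify are the boundedness statements packaged above, namely that $H=l+m$ is ample on $C\times D$ so that "degree $d+1$" carves out a finite-type parameter space, and that the graphs share one Hilbert polynomial. Separability plays no essential role in the graph construction, but keeping it as stated does no harm.
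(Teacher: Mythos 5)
Your proof is correct, and it takes a genuinely different route from the paper's. Both arguments begin by identifying a morphism with its graph in $C\times D$, but the finiteness mechanisms differ. The paper is elementary and quantitative: it uses Riemann--Hurwitz (this is where separability enters, since it makes $\deg R_t = 2g_C-2-d(2g_D-2)$ uniform over all separable $t$ of degree $d$) to choose $n$ large, notes that a morphism defined over $\F_q$ carries the fibres over the set $Y_n$ of degree-$n$ points of $D$ into the finite set $X_n$, and shows that a map is determined by these fibres: two maps agreeing over $Y_n$ would have graphs meeting in at least $|Y_n| > 2d+2dg_D$ points, violating the upper bound of Lemma~\ref{bounded intersection of graph lemma}; hence there are at most $\binom{|X_n|}{d}^{|Y_n|}$ maps. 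You instead observe that every graph has degree $d+1$ against the ample divisor $l+m$ and hence the fixed Hilbert polynomial $P(k)=(d+1)k+1-g_C$, so all graphs give $\F_q$-points of the single projective scheme $\mathrm{Hilb}^{P}(C\times D)$, and a finite-type scheme over $\F_q$ has only finitely many rational points; injectivity of $t\mapsto \Gamma_t$ finishes the argument. Your supporting computations ($\Gamma_t\cdot l=d$, $\Gamma_t\cdot m=1$, ampleness of $l+m$ via Nakai--Moishezon) are right. What your route buys: it is conceptually short, and, as you note, it never uses separability, so it actually proves the stronger statement that \emph{all} degree-$d$ morphisms over $\F_q$, separable or not, are finite in number. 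The cost is that you import Grothendieck's existence and projectivity of Hilbert schemes, a much heavier input than anything the paper invokes; the paper's proof is self-contained given Lemma~\ref{bounded intersection of graph lemma} (adjunction plus an index-type inequality, proved two pages earlier) and yields an explicit, if enormous, bound, which is more in the spirit of the rest of the paper, where that same intersection lemma is reused (e.g.\ in Proposition~\ref{frob control}).
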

\begin{proof}
Let $W$ denote the set of separable morphisms of degree $d$ from $C$ to $D$. By Theorem \ref{Rim-Hur formula}, for any $f \in W$,
\begin{equation*}
    2g_C - 2 = d(2g_D - 2) + \deg R_f,
\end{equation*}
where $R_f$ is the ramification divisor of $f$.

For any positive integer $n$, define:
\[
X_n = \bigcup_{i=1}^d C(\F_{q^{ni}}), \quad\quad Y_n = \left\{ a \in D(\overline{\F_q}) \,\middle|\, \kappa(a) = \F_{q^n} \right\},
\]
where $\kappa(a)$ denotes the residue field of $a$. Take \( n \) large enough so that:
\begin{align*}
    n &> \deg R_f, \\
    |X_n| &> d, \\
    |Y_n| &> 2d + 2d g_D.
\end{align*}

Since $f$ is defined over $\F_q$, it follows that $f^{-1}(Y_n) \subset X_n$. Moreover, every point in $Y_n$ has exactly $d$ preimages in $X_n$. Therefore, the number of possible such maps is at most
\[
\binom{|X_n|}{d}^{|Y_n|}.
\]

Furthermore, if two morphisms \( f, g \in W \) agree on all fibers over \( Y_n \), then their graphs \( \Gamma_f \) and \( \Gamma_g \) intersect in at least \( |Y_n| \) points, a number which is greater than the bound obtained in Lemma \ref{bounded intersection of graph lemma}. Since any such morphism $f$ is determined by its values on the fibers over $Y_n$, and for each $y \in Y_n$ there are $\binom{|X_n|}{d}$ possible choices for the fiber $f^{-1}(y)$, the total number of such maps is bounded above.
\end{proof}

The following result ensures that we only need to consider a finite segment of the Frobenius chain when comparing two morphisms. If the Frobenius actions along the two maps were too unbalanced, they would eventually intersect. This finiteness result controls the discrepancy between the Frobenius chains and allows us to work uniformly.

\begin{proposition}\label{frob control}
    Let $t_1, t_2\colon C \to D$ be nonconstant morphisms of smooth curves over the finite field $\F_q$. Then there are only finitely many integers $r \in \mathbb{N}$ such that
    \begin{equation}\label{frob equation}
        F_D^r \circ t_2(x) \neq t_1(x) \quad \text{for all } x \in C(\overline{\F_q}).
    \end{equation}
\end{proposition}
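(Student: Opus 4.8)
The plan is to reinterpret condition (\ref{frob equation}) as the disjointness of two graphs and then derive a contradiction from the growth of the lower bound in Lemma \ref{bounded intersection of graph lemma}. Write $d_1=\deg t_1$, $d_2=\deg t_2$, and set $\phi_r:=F_D^r\circ t_2$; since $F_D$ is purely inseparable of degree $q$, we have $\deg\phi_r=q^r d_2$. Condition (\ref{frob equation}) asserts exactly that $\phi_r$ and $t_1$ take distinct values at every point of $C(\ol{\F_q})$. Passing to the smooth projective completions $\ol C\to\ol D$ and extending both morphisms, this says that the graphs $\Gamma_{\phi_r}$ and $\Gamma_{t_1}$ in $\ol C\times\ol D$ can meet only over the finitely many points of $\ol C\setminus C$. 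So it suffices to show that for all but finitely many $r$ these graphs must in fact also meet over $C$.

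First I would apply Lemma \ref{bounded intersection of graph lemma} with $f=\phi_r$ and $g=t_1$. Once $r$ is large enough that $q^r d_2>d_1$, the two morphisms have distinct degrees, hence are distinct, and the lemma yields
\[
\Gamma_{\phi_r}\cdot\Gamma_{t_1}\;\ge\; q^r d_2 + d_1 - 2 g_D\sqrt{q^r d_1 d_2}.
\]
Regarding the right-hand side as a quadratic in $u=q^{r/2}$ with positive leading coefficient $d_2$, it tends to $+\infty$, so it exceeds any fixed constant for $r$ large. (When $g_D=0$ the lemma gives the exact value $q^r d_2+d_1>0$, and the conclusion is immediate.)

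The step I expect to be the main obstacle is bounding the contribution to this intersection number coming from the boundary $\ol C\setminus C$, so that the growth above forces an intersection over $C$ itself. Fix $x_0\in\ol C\setminus C$ with $\phi_r(x_0)=t_1(x_0)=:y_0$. Because $F_D^r$ is purely inseparable of degree $q^r$ it is totally ramified at every point, so $\phi_r$ is ramified at $x_0$ to order $q^r\,e_{t_2}(x_0)\ge q^r$, whereas $t_1$ is ramified there only to order $e_1:=e_{t_1}(x_0)\le d_1$. The local intersection multiplicity of the two graphs at $(x_0,y_0)$ equals the order of vanishing of the difference of the local expansions of $\phi_r$ and $t_1$; for $q^r>d_1$ this difference is governed by its lowest-order term $u^{e_1}$, with no cancellation, so the multiplicity is exactly $e_1\le d_1$. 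Summing over the finitely many boundary points bounds the total boundary contribution by $B:=d_1\cdot\#(\ol C\setminus C)$, a constant independent of $r$.

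Combining the two estimates finishes the argument: for $r$ large enough that the lower bound above exceeds $B$, the total intersection number strictly exceeds the maximum possible boundary contribution, so $\Gamma_{\phi_r}$ and $\Gamma_{t_1}$ must meet at some point lying over $C$, i.e.\ there exists $x\in C(\ol{\F_q})$ with $F_D^r\circ t_2(x)=t_1(x)$. This contradicts (\ref{frob equation}). Hence (\ref{frob equation}) holds for only finitely many $r$, as claimed.
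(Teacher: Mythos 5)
Your proposal is correct and takes essentially the same approach as the paper's proof: both pit the growing lower bound on $\Gamma_{t_1}\cdot\Gamma_{F_D^r\circ t_2}$ from Lemma \ref{bounded intersection of graph lemma} against a uniform, $r$-independent bound on the local intersection multiplicities at the finitely many boundary points of $\ol C\setminus C$, obtained from the fact that the local expansion of $F_D^r\circ t_2$ has vanishing order at least $q^r$ while that of $t_1$ is bounded by $d_1$. Your only additions are minor refinements the paper glosses over (verifying the two morphisms are distinct by comparing degrees, and treating $g_D=0$ separately).
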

\begin{proof}
Let $V$ be the set of all integers $r$ for which \eqref{frob equation} holds. Denote by $\Gamma$ and $\Gamma_r$ the graphs of $t_1$ and $F_D^r \circ t_2$, respectively, inside $X := \overline{C} \times D$. Let $d_1 := \deg(t_1)$ and $d_2 := \deg(t_2)$, so the degree of $F_D^r \circ t_2$ is $d_r := q^r d_2$.

By Lemma \ref{bounded intersection of graph lemma}, we have:
\[
\Gamma \cdot \Gamma_r \geq q^r d_2 + d_1 - \sqrt{4 q^r g_D^2 d_1 d_2}
\]
where $g_D$ is the genus of $D$.
On the other hand, we may compute the intersection number via local intersection multiplicities:
\[
\Gamma \cdot \Gamma_r = \sum_{x \in \overline{C}(\overline{\F_q})} i_x(t_1, F_D^r \circ t_2),
\]
where $i_x(t_1, F_D^r \circ t_2)$ denotes the intersection multiplicity at $x$.

For any $r \in V$, the condition \eqref{frob equation} ensures that $i_x(t_1, F_D^r \circ t_2) = 0$ for all $x \in C(\overline{\F_q})$. Hence the sum is taken over a finite set of points:
\[
\Gamma \cdot \Gamma_r = \sum_{j=1}^k i_{x_j}(t_1, F_D^r \circ t_2),
\]
where $\{x_1, \dots, x_k\}$ is the finite complement of $C(\overline{\F_q})$ in $\overline{C}(\overline{\F_q})$.
 Let $z$ be a uniformizer at $x_j$, and suppose the local expansions of $t_1$ and $t_2$ at $x_j$ are
\begin{align*}
t_1(z) &= a_0 + a_l z^l + a_{l+1} z^{l+1} + \cdots = a_0 + \sum_{h=l}^\infty a_h z^h \quad \text{with } a_l \neq 0, \\
t_2(z) &= b_0 + b_1 z + b_2 z^2 + \cdots = \sum_{h=0}^\infty b_h z^h.
\end{align*}
Then the composition $F_D^r \circ t_2$ is given locally by
\[
F_D^r \circ t_2(z) = \sum_{h=0}^\infty b_h^{q^r} z^{q^r h}.
\]
When $q^r > l$, the intersection number $i_{x_j}(t_1, F_D^r \circ t_2)$ is at most $l$, since the smallest power of $z$ appearing in $t_1(z) - F_D^r \circ t_2(z)$ will be $l$ or lower. Hence, for large $r$, each such local intersection multiplicity is uniformly bounded, and the total $\Gamma \cdot \Gamma_r$ is bounded above independently of $r$.

But from the earlier inequality, we know that $\Gamma \cdot \Gamma_r$ grows without bound as $r \to \infty$. Therefore, only finitely many $r$ can satisfy \eqref{frob equation}, and so $V$ is finite.
\end{proof}

\begin{prop}\label{negative char implies finite aut}
    Let $k$ be an algebraically closed field and $C$ be a smooth quasi-projective curve over $k$. Assume $\chi(C)$, the Euler characteristic of $C$, is less than zero. Then the number of automorphisms of $C$ is finite.
\end{prop}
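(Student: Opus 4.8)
The plan is to reduce the statement to the smooth projective completion of $C$ and then argue according to the genus. Let $\overline{C}$ denote the unique smooth projective curve containing $C$ as a dense open subscheme (its existence is guaranteed by the Remark following the classification of curves via their function fields), let $g$ be the genus of $\overline{C}$, and let $S = \overline{C} \setminus C$, a finite set of closed points. With this notation $\chi(C) = 2 - 2g - \#S$, so the hypothesis $\chi(C) < 0$ reads $2g - 2 + \#S > 0$.

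First I would show that extension to the completion gives an injection $\Aut(C) \hookrightarrow \Stab_{\Aut(\overline{C})}(S)$, the subgroup of automorphisms of $\overline{C}$ preserving $S$ setwise. Indeed, any $\phi \in \Aut(C)$ induces a $k$-automorphism of $k(C) = k(\overline{C})$, which by the correspondence between curves and their function fields extends to a unique automorphism $\overline{\phi}$ of $\overline{C}$; since $\overline{\phi}$ restricts to the isomorphism $\phi$ on the dense open $C$, it carries $S$ bijectively onto $S$. The extension is unique and $\phi \mapsto \overline{\phi}$ is an injective group homomorphism, so it suffices to prove that $\Stab_{\Aut(\overline{C})}(S)$ is finite.

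I then split into three cases. \textbf{If $g \ge 2$}, then $\Aut(\overline{C})$ is already finite by the classical finiteness theorem for automorphism groups of smooth projective curves of genus at least $2$ over an algebraically closed field, and a fortiori so is its subgroup $\Stab_{\Aut(\overline{C})}(S)$. \textbf{If $g = 1$}, the hypothesis forces $\#S \ge 1$; choosing a base point identifies $\overline{C}$ with an elliptic curve $E$, and $\Aut(E) = E(k) \rtimes \Aut(E,0)$ with $\Aut(E,0)$ finite. A translation $\tau_t$ preserving the nonempty finite set $S$ must satisfy $t \in S - p$ for any fixed $p \in S$, a finite set; hence the translations preserving $S$ form a finite subgroup, and together with the finiteness of $\Aut(E,0)$ this makes $\Stab_{\Aut(\overline C)}(S)$ finite. \textbf{If $g = 0$}, then $\overline{C} \cong \P^1$ and the hypothesis forces $\#S \ge 3$; since $\Aut(\P^1) = \PGL_2(k)$ acts sharply $3$-transitively, any automorphism fixing the $\ge 3$ points of $S$ pointwise is the identity, so the action on $S$ defines an injection $\Stab_{\Aut(\overline C)}(S) \hookrightarrow \Sym(S)$ and the stabilizer is finite.

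The only input from outside elementary considerations is the classical finiteness of $\Aut(\overline{C})$ for $g \ge 2$, so that is the step I would flag as the main external ingredient; the remaining work is organizational. The subtle points to check carefully are the clean justification that the function-field automorphism extends and restricts to a bona fide setwise stabilizer (so that the map on groups is well defined and injective), and, in the $g = 1$ case, the observation that $\chi(C) < 0$ genuinely forces $S$ to be nonempty, which is exactly what rules out the infinite translation subgroup of $E$.
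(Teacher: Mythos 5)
Your proposal is correct, and it follows the same overall skeleton as the paper's proof: pass to the smooth completion $\overline{C}$, observe that every automorphism of $C$ extends uniquely to an automorphism of $\overline{C}$ preserving the finite boundary set, and then split by genus. The differences are in how the individual cases are closed. For $g = 0$ the paper does not use sharp $3$-transitivity of $\PGL_2(k)$; instead it reduces to the subgroup fixing the boundary pointwise (of index at most $m!$) and applies its own Lemma \ref{bounded intersection of graph lemma} on $\overline{C}\times\overline{C}$: a nontrivial automorphism fixing the boundary pointwise would have graph meeting the diagonal in at least $m$ points, while the intersection number is at most $2$, forcing $m \le 2$ and contradicting $\chi(C)<0$. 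Your $3$-transitivity argument is more elementary and standard; the paper's choice re-uses machinery it needs elsewhere anyway. For $g = 1$ the paper again works with the pointwise stabilizer: it places a boundary point at the origin, so the relevant group sits inside the origin-fixing automorphisms, bounded by $24$; your route via the decomposition $\Aut(E) = E(k)\rtimes \Aut(E,0)$ and the finiteness of translations preserving a nonempty finite set is equivalent in spirit but handles the setwise stabilizer directly. For $g \ge 2$ both proofs invoke finiteness of $\Aut(\overline{C})$; one small point of care is that you should make explicit that this classical finiteness holds in positive characteristic (the Hurwitz bound $84(g-1)$ does not), which is why the paper cites Stichtenoth's positive-characteristic bounds separately --- the finiteness you need is still true, so your argument stands, but in a paper set entirely over $\F_q$ this deserves an explicit reference.
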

\begin{proof}
    Let $\ol C$ be the completion of $C$ and let $M:=\ol C \backslash C$. The set $M $ is finite and let $m $ be its size. Then $\chi (C)= 2-2g-m$, where $g$ is the genus of $C$. Any automorphism of $C$ extends to an automorphism of $\ol C$ that fixes the set $M$. Let $\Aut(C)$ and $\Aut_M(C)$ be the group of automorphisms of $C$ and the group of automorphisms of $C$ that extend to identity of $M$ respectively. Then $\Aut_M(C)$ is a subgroup of $\Aut(C)$ and its index is at most $m!$. Therefore, it is enough to show that $\Aut_M(C)$ is finite. Based on the genus, $g$, we will show $\Aut_M(C)$ is finite.
 \begin{itemize}
    \item $\boldsymbol{g=0:}$ Let \( f \in \Aut_M(C) \) be a nontrivial automorphism. Let \(\Gamma\) and \(\Gamma_f\) denote the graphs of the identity and of \(f\), respectively, considered as divisors in \(\overline{C} \times \overline{C}\). Since both maps have degree \(1\) and \(\overline{C}\) has genus \(0\), Lemma \ref{bounded intersection of graph lemma} implies that
\[
\Gamma \cdot \Gamma_f \leq 2.
\]
On the other hand, since \(f\) acts trivially on the marked points \(M\), the graphs \(\Gamma\) and \(\Gamma_f\) must intersect at least \(m\) times, and thus
\[
\Gamma \cdot \Gamma_f \geq m.
\]
Combining the inequalities yields \(m \leq 2\), which implies
\[
\chi(C) = 2 - 2g - m \geq 0.
\]
This contradicts the assumption that \(\chi(C) < 0\), and we conclude that \(\Aut_M(C)\) must be finite.

    \item$\boldsymbol{g=1:}$ By \cite[Prop $7.13$ and Rem $7.14$]{milne1996elliptic}, the number of automorphisms of $C$ that fix the origin $O$ is bounded by $24$. The Euler characteristic of $C$ is less than zero therefore $m\ge 1$. Without loss of generality we may assume $O\in M$. Then the number of elements in  $\Aut_M(C)$ is bounded by $24$ as well.

    \item $\boldsymbol{g\ge 2:}$ By \cite[\MakeUppercase{\romannumeral 4}]{hartshorne2013algebraic} if the characteristic of $k$ is zero, then the number of automorphisms of $\ol C$ is bounded by $84(g-1)$.
    By \cite{stichtenoth1973automorphismengruppe}, when the characteristic of $k$ is bigger than $0$, the number of automorphisms of $\ol C$ is bounded by \(16 \cdot g^4\)
unless $\ol C$ is the curve with equation
\[
y^{p^n} + y = x^{p^{n+1}},
\]
in which case it has genus $g = \frac{1}{2}p^n(p^n - 1)$ and the number of automorphisms is exactly 
\[
  p^{3n}(p^{3n} + 1)(p^{2n} - 1).
\]

Therefore, the number of automorphisms that fix the set $M$ is finite.
\end{itemize}
\end{proof}
\section{Geometry of $S/C$}\label{ Geometry}

Let \( C \) be a smooth affine curve over the finite field \( \mathbb{F}_q \), and let  
\[ \pi: S \xrightarrow{\quad\quad} C \]  
be a smooth and proper morphism whose generic fiber is a smooth, projective curve of genus \( g \).

\begin{definition}
We say that the surface \( S \) over \( C \) is:

\begin{itemize}
    \item \textbf{Trivial:} If there exists a smooth and projective curve \( D \) over \( \mathbb{F}_q \) such that  
    \[
    S \cong D \times_{\mathbb{F}_q} C
    \]
    as \( C \)-schemes.

    \item \textbf{Isotrivial:} If there exists a morphism of curves \( C' \to C \) and a curve \( D \) defined over the field of definition of \( C' \) such that  
    \[
    S \times_C C' \cong D \times C'
    \]
    as \( C' \)-schemes.

    \item \textbf{Non-isotrivial:} If it is not isotrivial.
\end{itemize}
\end{definition}

In this section, we study the properties of isotrivial surfaces and the effect of the relative Frobenius. Moreover, we prove Theorem \ref{f.m. conic bundle} and Theorem \ref{preperiodic}.

\begin{proposition}\label{trivial
after separable}
    If $ \pi: S \rightarrow C$ is isotrivial, then there exists a finite, generically Galois morphism $C' \rightarrow C $ such that 
    $ \pi_{C'} : S\times_C C' \rightarrow C'$ is trivial.
\end{proposition}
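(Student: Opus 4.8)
The plan is to upgrade the given isotriviality datum into a \emph{generically Galois} cover. By definition of isotriviality we are handed a morphism $C' \to C$ and a curve $D$ over the field of definition of $C'$ with $S \times_C C' \cong D \times C'$ as $C'$-schemes. First I would reduce to the case where $C' \to C$ is finite: since $S \to C$ is a proper family of curves, the locus over which the fibre becomes isomorphic to a fixed curve $D$ is detected after a finite base change, so I may replace $C'$ by the smooth projective model of the relevant component of $C' \times_C (\text{normalization})$ and arrange that $C' \to C$ is a finite (dominant, hence surjective by the first theorem of Section~\ref{background}) morphism of smooth curves. At this stage $S$ becomes trivial, namely $\cong D \times C'$, after pullback along the finite cover $C' \to C$.

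Next I would take the Galois closure. Let $K = \F_q(C)$ and $K' = \F_q(C')$ be the function fields, and let $M$ be the Galois closure of the separable part of $K'/K$ inside a fixed algebraic closure. Writing $C''$ for the smooth projective curve with function field $M$ (which exists and is unique by the structure theorem for curves and their function fields in Section~\ref{background}), the composite $C'' \to C' \to C$ is finite and \emph{generically Galois}. Since $S \times_C C' \cong D \times C'$ is already trivial, base-changing once more along $C'' \to C'$ gives
\[
S \times_C C'' \;\cong\; (S \times_C C') \times_{C'} C'' \;\cong\; (D \times C') \times_{C'} C'' \;\cong\; D \times C'',
\]
using the standard compatibility of fibre products; hence $S$ is trivial over the generically Galois cover $C''$, which is the assertion with $C''$ in place of $C'$. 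A minor point to address is that $D$ is a priori defined over the field of definition of $C'$, not over $\F_q$; but triviality over $C''$ only requires $D$ to be defined over the constant field of $C''$, and after enlarging the cover (or equivalently extending the constant field, which corresponds to a further unramified cover of $C$) one may absorb this into the generically Galois datum.

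The subtlety I expect to be the main obstacle is the \emph{inseparability}: in characteristic $p$ the extension $K'/K$ may have a purely inseparable part, and ``generically Galois'' should be interpreted correctly in that setting. The clean way around this is to observe, via Lemma~\ref{maps form}, that any morphism of curves over $\F_q$ factors as a power of Frobenius followed by a separable map, and the Frobenius factor $F_C^r \colon C \to C$ is itself a purely inseparable cover under which $S$ pulls back to its Frobenius twist $S^{(q^r)}$ — a trivial family if $S$ was — so it suffices to make the \emph{separable} part Galois, which is exactly what taking the Galois closure $M$ of the separable subextension accomplishes. I would therefore organize the argument so that the purely inseparable part is handled by Frobenius twisting (where triviality is manifestly preserved) and the separable part by the Galois-closure construction, and then verify that the resulting composite cover $C'' \to C$ is finite and generically Galois and that $S \times_C C''$ is trivial.
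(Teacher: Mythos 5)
There is a genuine gap, and it sits exactly at the point you yourself flag as the ``main obstacle.'' Factor the given cover as $C' \xrightarrow{\;t\;} C \xrightarrow{\;F_C^r\;} C$ with $t$ separable, as in Lemma~\ref{maps form}. What the isotriviality hypothesis then gives you is that the pullback of the \emph{Frobenius twist} $S^{(q^r)}$ along the separable map $t$ is trivial (equivalently, $(S\times_{C,t}C')^{(q^r/C')}\cong D\times C'$). Your claim that the inseparable part is harmless because ``$S$ pulls back to its Frobenius twist $S^{(q^r)}$ --- a trivial family if $S$ was'' runs in the wrong direction: you do not know that $S$ is trivial; that is the conclusion being sought. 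What you need is the converse implication --- if the Frobenius twist of a family is trivialized by a separable cover, then so is the family itself --- and this is not formal; it is essentially the entire content of the proposition. Moreover, it is genuinely false that trivialization along a purely inseparable cover comes for free: the paper's remark following Theorem~\ref{preperiodic} exhibits the conic bundle $X^2+tY^2+Z^2+tXY+tYZ+XZ=0$ over $\F_2[t]$, which is nontrivial yet becomes trivial over the purely inseparable extension $\F_2[\sqrt{t}\,]$. So in genus $0$ one cannot argue that a family whose inseparable pullback (equivalently, whose Frobenius twist) is trivial must itself be trivial. There is also a mechanical error downstream of the same issue: your displayed chain $S\times_C C''\cong (S\times_C C')\times_{C'}C''\cong\cdots$ presupposes a morphism $C''\to C'$, but the Galois closure $M$ of the separable subextension of $K'/K$ does not contain $K'$ when the inseparable part is nontrivial, so no such morphism exists.

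The paper's proof takes a different route precisely to get around this. It first reduces, by the same Galois-closure observation you make, to producing \emph{some} finite separable trivializing cover. It then invokes \cite{moret2020points} to find a point of $S$ whose residue field is separable over that of its image; normalizing gives a finite separable multisection, so after a separable base change one may assume $S(C)\neq\emptyset$. Finally it treats the genera separately, citing \cite{creutz2024galois} for $g\ge 2$, \cite{ulmer2011park} for $g=1$, and the Hilbert class field of $\F_q[C]$ together with the $\P^1$-bundle analysis of Section~\ref{Proj space pver dede} for $g=0$. If you wish to salvage your strategy, note that for $g\ge 2$ the gap can be bridged: the automorphism group scheme of a curve of genus at least $2$ is finite \'etale, so a form trivialized by a purely inseparable extension is already trivial. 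But for $g=1$, and especially for $g=0$ where the conic-bundle example above lives, additional arithmetic input of the kind the paper uses is unavoidable.
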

\begin{proof} If there exists a separable and finite $C'\ra C $ with the desired property, then we can take the Galois closure to obtain the desired morphism.

The paper \cite{moret2020points} shows that there exists a point $s \in  S$ such that the function field of $s$ is a separable extension of the function field of $ \pi(s)$. After normalizing $C$ through $s$, we obtain a multisection $T$ in $S$, that is separable and finite over $C$. Since a tower of separable extensions is also separable, without loss of generality, we may assume $ S(C) \neq \emptyset$.

The case of $g\ge 2 $ has been discussed in \cite[$4$]{creutz2024galois}, while the case of elliptic curves has been worked out in \cite[Lec $1$]{ulmer2011park}. For $g=0$, based on the discussion in Section \ref{Proj space pver dede}, it is enough to consider the base change to the Hilbert class field of $\F_q[C]$ where every ideal becomes principal.
\end{proof}

\subsection{Proof of Theorem \ref{f.m. conic bundle}}
 By \cite[IV, Cor $2.4$]{milne1980etale}, we have a long exact sequence of sheaves 
\begin{equation*}
   \begin{tikzcd}
0 \arrow[rr] &  & \G_m \arrow[rr] &  & \GL_2 \arrow[rr] &  & \PGL_2 \arrow[rr] &  & 0
\end{tikzcd}
\end{equation*}
on the \'etale site of $C$. By \cite[IV, Thm $2.5$, Step $3$]{milne1980etale}, there is a long exact sequence of pointed cohomology sets
\begin{equation*}
    \begin{tikzcd}
{H^1_{\text{\'et}}(C,\GL_2)} \arrow[rr] &  & {H^1_{\text{\'et}}(C,\PGL_2)} \arrow[rr,"\delta"] &  & {H^2_{\text{\'et}}(C,\G_m)}. 
\end{tikzcd}
\end{equation*}
Conic bundles over $C$ are classified by ${H^1_{\text{\'et}}(C,\PGL_2)}$. As mentioned in \cite[IV, Prop $2.7$]{milne1980etale}, the image of $\delta $ is a subset of $2$-torsion points in $H^2_{\text{\'et}}(C,\G_m)$. We will show that the map 
\begin{equation*}
    H^1_{\text{\'et}}(C,\PGL_2) \longrightarrow H^2_{\text{\'et}}(C,\G_m)
\end{equation*}
has finite image and finite fibers, implying that $ H^1_{\text{\'et}}(C,\PGL_2)$ is finite.

\begin{lemma}
    The group $H^2_{\text{\'et}}(C,\G_m)[2]$ is finite.
\end{lemma}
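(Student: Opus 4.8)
The plan is to recognize $H^2_{\text{\'et}}(C,\G_m)$ as the cohomological Brauer group $\mathrm{Br}'(C)$ and to control it through the residues of the function field $K=\F_q(C)$, exploiting that $C$ is a smooth affine curve over a finite field so that only finitely many ``missing'' points can contribute ramification.

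First I would let $\overline{C}$ be the smooth projective completion of $C$ (which exists and is unique by the Remark in Section~\ref{background}), and write $M=\overline{C}\setminus C$ for the finite set of removed closed points; each has finite residue field $\kappa(v)$. Since $C$ is regular and integral, the cohomological Brauer group injects into the Brauer group of the function field,
\[
\mathrm{Br}'(C)=H^2_{\text{\'et}}(C,\G_m)\hookrightarrow \mathrm{Br}(K),
\]
and a class of $\mathrm{Br}(K)$ lies in the image precisely when all of its residues $\partial_v$ vanish at the closed points $v\in C^0$ (purity for the Brauer group on a smooth curve).

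Next I would invoke the residue (Faddeev) exact sequence for the projective curve $\overline{C}$,
\[
0\longrightarrow \mathrm{Br}(\overline{C})\longrightarrow \mathrm{Br}(K)\xrightarrow{\ (\partial_v)\ }\bigoplus_{v\in \overline{C}^0}H^1(\kappa(v),\Q/\Z),
\]
and restrict to the classes that are unramified along $C^0=\overline{C}^0\setminus M$. This produces an exact sequence
\[
0\longrightarrow \mathrm{Br}(\overline{C})\longrightarrow \mathrm{Br}'(C)\xrightarrow{\ (\partial_v)_{v\in M}\ }\bigoplus_{v\in M}H^1(\kappa(v),\Q/\Z).
\]
The crucial input is the vanishing $\mathrm{Br}(\overline{C})=0$, valid because $\overline{C}$ is a smooth projective curve over a finite field (equivalently, the Hasse principle holds for $\mathrm{Br}(K)$). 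Hence $\mathrm{Br}'(C)$ injects into $\bigoplus_{v\in M}H^1(\kappa(v),\Q/\Z)$, and since each $\kappa(v)$ is finite we have $H^1(\kappa(v),\Q/\Z)\cong \Q/\Z$, giving $\mathrm{Br}'(C)\hookrightarrow \bigoplus_{v\in M}\Q/\Z$.

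Finally, passing to $2$-torsion (which is left exact and therefore preserves injections), I conclude
\[
H^2_{\text{\'et}}(C,\G_m)[2]=\mathrm{Br}'(C)[2]\hookrightarrow \Big(\bigoplus_{v\in M}\Q/\Z\Big)[2]\cong (\Z/2\Z)^{|M|},
\]
which is finite; the same reasoning in fact yields finiteness of the full $n$-torsion for every $n$. The only genuinely nontrivial ingredients are the vanishing $\mathrm{Br}(\overline{C})=0$ and the exactness of the residue sequence; once these are granted the finiteness is immediate, because only the finitely many points of $M$ can contribute, each a single copy of $\Q/\Z$ whose $2$-torsion is $\Z/2\Z$.
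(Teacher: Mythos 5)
Your proof is correct and follows essentially the same route as the paper: both arguments reduce to an injection of $H^2_{\text{\'et}}(C,\G_m)$ into a direct sum of copies of $\Q/\Z$ indexed by the finitely many points of $\overline{C}\setminus C$, and then pass to $2$-torsion. The only difference is cosmetic: the paper cites \cite[II, Prop 2.1]{milne2006arithmetic} for the injection into $\bigoplus_{v\in \overline{C}\setminus C}\mathrm{Br}(K_v)$, whereas you re-derive that injection from the residue sequence together with $\mathrm{Br}(\overline{C})=0$, which is precisely the content behind the cited result (note $\mathrm{Br}(K_v)\cong H^1(\kappa(v),\Q/\Z)$ for finite residue fields).
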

\begin{proof}
    By \cite[$2$, Prop $2.1$]{milne2006arithmetic}, we have an injection
   \begin{equation*}
    H^2_{\text{\'et}}(C,\G_m) \xhookrightarrow{\hspace*{1cm}} \bigoplus_{v\in \overline{C} \setminus C} \text{Br}(K_v),
\end{equation*}
    where the sum is over all closed points of $\ol C $ that are not in $C$. The Brauer group of any local field is isomorphic to $\Q/\Z$. Therefore, we have an injection 
    \begin{equation*}
         H^2_{\text{\'et}}(C,\G_m)[2] \xhookrightarrow{\hspace*{1cm}}\bigoplus_{v\in \ol C \backslash C} {\Z}/2\Z
    \end{equation*}
   Since the set of closed points $\overline{C} \setminus C$ is finite, and each $\operatorname{Br}(K_v)[2] \cong \Z/2\Z$, the direct sum $\bigoplus_{v\in \ol C \setminus C} \Z/2\Z$ is finite. This implies  $H^2_{\text{\'et}}(C,\G_m)[2]$ is a finite set.
\end{proof}

Now we will study the fibers of $\delta$.

\begin{lemma}
    Let $A$ and $B$ be Azumaya algebras over a scheme $C$, both of rank $4$, and suppose they define the same class in the cohomology group $H^2_{\text{\'et}}(C, \mathbb{G}_m)$. Then $A$ and $B$ are Zariski locally isomorphic; that is, for any closed point $x \in C$, there exists a Zariski open neighborhood $U$ of $x$ such that $A|_U \cong B|_U$ as Azumaya algebras over $U$.
\end{lemma}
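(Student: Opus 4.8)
The plan is to reduce to the generic point, where the equality of Brauer classes becomes an honest isomorphism of central simple algebras, and then to propagate that isomorphism back down to the local rings of $C$ using the rigidity of Azumaya orders supplied by Theorem \ref{azumaya over local ring}.

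First I would pass to the generic fiber. Let $K$ be the function field of $C$ and let $\eta=\Spec K \to C$ be the inclusion of the generic point. Pullback along $\eta$ is functorial on $H^2_{\text{\'et}}(-,\mathbb{G}_m)$, so the hypothesis $[A]=[B]$ in $H^2_{\text{\'et}}(C,\mathbb{G}_m)$ yields $[A_K]=[B_K]$ in $\operatorname{Br}(K)=H^2_{\text{\'et}}(\Spec K,\mathbb{G}_m)$, where $A_K:=A\otimes_{\mathcal{O}_C}K$ and $B_K:=B\otimes_{\mathcal{O}_C}K$. Both $A_K$ and $B_K$ are central simple $K$-algebras of dimension $4$ lying in the same Brauer class. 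By Artin--Wedderburn each is a matrix algebra over the unique division algebra representing that class, and equality of dimensions forces the matrix sizes to agree; hence $A_K\cong B_K$ as $K$-algebras. I would fix one such isomorphism and use it to identify both generic fibers with a single central simple algebra $\Sigma:=A_K\cong B_K$ over $K$. This is where the hypothesis that $A$ and $B$ \emph{both} have rank $4$ is essential: similarity together with equal dimension is what upgrades ``same Brauer class'' to ``isomorphic''.

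Next I would work locally at a closed point. Fix $x\in C$ and set $R:=\mathcal{O}_{C,x}$; since $C$ is a smooth, hence regular, integral curve, $R$ is a discrete valuation ring, in particular a local principal ideal domain, with fraction field $K$. Writing $A_x:=A\otimes_{\mathcal{O}_C}R$ and $B_x:=B\otimes_{\mathcal{O}_C}R$, these are Azumaya algebras over $R$, so each is free of rank $4$ as an $R$-module; under the identification fixed above, both are therefore full $R$-lattices and subrings of $\Sigma$, i.e.\ $R$-orders in $\Sigma$. Theorem \ref{azumaya over local ring} now applies: there is an invertible $t\in\Sigma$ with $A_x\subseteq t^{-1}B_x t$. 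Because $A_x$ is itself Azumaya over $R$, it is a maximal order, and since $t^{-1}B_x t$ is an order containing it, we must have $A_x=t^{-1}B_x t$. Conjugation by $t$ is thus an $R$-algebra isomorphism $B_x\xrightarrow{\sim}A_x$, so $A_x\cong B_x$ over $R$.

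Finally I would spread this isomorphism out to a Zariski neighborhood of $x$. The algebras $A$ and $B$ are coherent $\mathcal{O}_C$-modules, hence finitely presented, and $\mathcal{O}_{C,x}=\varinjlim_{U\ni x}\mathcal{O}(U)$ over the Zariski open neighborhoods $U$ of $x$. An isomorphism of finitely presented algebras over the colimit ring already lives, and is an isomorphism, at a finite stage: the finitely many structure constants of the map $B_x\to A_x$ and of its inverse lie in $\mathcal{O}(U)$ for $U$ small enough, and shrinking $U$ further makes the two composites equal to the identity and the map multiplicative. This produces $A|_U\cong B|_U$ as Azumaya algebras over $U$, as required. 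I expect the main obstacle to be the bookkeeping in the middle step, namely verifying that after the single global identification $A_K\cong B_K\cong\Sigma$ the two localizations are genuinely orders inside one and the same $\Sigma$, so that Theorem \ref{azumaya over local ring} may be invoked, and then converting the containment it provides into an equality via maximality; the generic reduction and the spreading-out argument are comparatively routine.
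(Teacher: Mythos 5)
Your proposal is correct and takes essentially the same route as the paper's proof: localize at a closed point $x$ so that $R=\mathcal{O}_{C,x}$ is a DVR, use the equality of Brauer classes (plus equal rank) to identify both generic fibers with a single central simple $K$-algebra $\Sigma$, view $A_x$ and $B_x$ as $R$-orders in $\Sigma$, apply Theorem \ref{azumaya over local ring}, and spread the resulting isomorphism out to a Zariski neighborhood. If anything, you are more careful than the paper at two points it glosses over — invoking Artin--Wedderburn to upgrade ``same Brauer class'' to an actual isomorphism $A_K\cong B_K$, and using maximality of $A_x$ to convert the containment $A_x\subseteq t^{-1}B_x t$ into an equality.
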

\begin{proof} 
  Let $x\in C$ be a closed point, and write $R := \O_{C,x}$ for its local ring.  Since Zariski neighborhoods of $x$ correspond to localizations of $R$, it suffices to prove:
  \[
    A_x \;\cong\; B_x
  \]
  as $R$-algebras, where $A_x:=A\times_C \Spec R$.  Indeed, an isomorphism of finite locally free $R$-algebras extends to one on some open neighborhood of $x$.
  
The ring $R$ is a discrete valuation ring with maximal ideal $\mathfrak m$ and fraction field $K=\operatorname{Frac}(R)$. By hypothesis, $A_x$ and $B_x$ become isomorphic over the generic point of $\Spec R$  because they represent the same Brauer class.
     The Azumaya algebras $A_x$ and $B_x$ restrict to $R$-orders in the same central simple $K$-algebra
    \[
      \Sigma := A_x \otimes_R K \;\cong\; B_x \otimes_R K,
    \]
 Applying Theorem \ref{azumaya over local ring} to the central separable $R$-orders $A_x$ and $B_x$ in $\Sigma$, we obtain an invertible $t\in\Sigma^\times$ with
  \[
    A_x \cong t^{-1}B_x\,t.
  \]
  This completes the proof of the lemma.
\end{proof}

Let $A$ be an Azumaya algebra over $C$, and let $G$ be the sheaf of automorphisms of $A$ as an $\mathcal{O}_C$-algebra on the Zariski site of $C$. That is, $G$ assigns to each open subset $U \subseteq C$ the group of $\mathcal{O}_U$-algebra automorphisms of $A|_U$. Since all Azumaya algebras in the fiber over the Brauer class of $A$ are Zariski-locally isomorphic to $A$, they correspond to $G$-torsors in the Zariski topology. Thus, the fiber of $\delta$ over the class of $A$ injects into $H^1_{\text{Zar}}(C, G)$.

\begin{lemma}
   The number of elements of $H^1_{\text{Zar}}(C,G)$ is finite.
\end{lemma}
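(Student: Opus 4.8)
The plan is to identify the sheaf $G$ explicitly and then reduce the finiteness of $H^1_{\text{Zar}}(C,G)$ to the finiteness of an ordinary class number. First I would observe that, since $A$ is an Azumaya algebra, the Skolem--Noether theorem over local rings shows that every $\O_C$-algebra automorphism of $A$ is Zariski-locally inner; hence $G\cong \PGL_1(A):=\GL_1(A)/\G_m$, where $\GL_1(A)=A^\times$ is the sheaf of units of $A$ and $\G_m$ is its central subsheaf. This yields a short exact sequence of sheaves of groups on the Zariski site of $C$,
\[
1 \longrightarrow \G_m \longrightarrow \GL_1(A) \longrightarrow G \longrightarrow 1,
\]
the surjectivity on the right being exactly the Zariski-local liftability of inner automorphisms to units (any section of $G$ agrees near a point with conjugation by a unit representing it in the stalk).

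Next I would pass to the associated exact sequence of pointed cohomology sets,
\[
H^1_{\text{Zar}}(C,\GL_1(A)) \longrightarrow H^1_{\text{Zar}}(C,G) \longrightarrow H^2_{\text{Zar}}(C,\G_m).
\]
The key simplification is that the underlying space of $C$ is Noetherian of Krull dimension $1$, so Grothendieck's vanishing theorem forces $H^2_{\text{Zar}}(C,\G_m)=0$. Consequently the connecting map lands in a point, exactness makes the first arrow surjective, and it suffices to prove that $H^1_{\text{Zar}}(C,\GL_1(A))$ is finite.

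Finally I would interpret $H^1_{\text{Zar}}(C,\GL_1(A))$ concretely: it classifies sheaves of right $A$-modules that are Zariski-locally isomorphic to $A$, that is, isomorphism classes of finitely generated projective right $A$-modules which are locally free of rank $1$. Writing $R=\O(C)$, a Dedekind domain with fraction field $K$, the algebra $A$ is a maximal $R$-order in the central simple $K$-algebra $\Sigma=A\otimes_R K$, and the set above is precisely the (right) ideal class set of $A$. Its finiteness is the statement that the class number of a maximal order over the coordinate ring of a smooth affine curve over $\F_q$ is finite, which follows from the Jordan--Zassenhaus theorem together with the finiteness of $\Cl(R)$.

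I expect the main obstacle to be this last step: correctly identifying the $\GL_1(A)$-torsors with locally free rank-$1$ $A$-modules and then invoking the right finiteness input. The arithmetic finiteness here genuinely uses that the base field is finite, so that $\Cl(R)$ is finite and the Jordan--Zassenhaus reduction applies; an arbitrary Dedekind domain would not suffice. A secondary point to verify carefully is the exactness of the defining sequence in the \emph{Zariski} rather than the étale topology, since it is precisely this that allows Grothendieck vanishing to annihilate the obstruction $H^2_{\text{Zar}}(C,\G_m)$ and thereby collapse the computation onto $\GL_1(A)$.
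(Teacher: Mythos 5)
Your proposal is correct in substance but takes a genuinely different route from the paper's. The paper works directly with the automorphism sheaf $G$: it uses the localization sequence relating $G$, $j_*G$, and skyscraper sheaves $G(K_v)/G(\O_v)$ at the closed points, kills $H^1(C,j_*G)$ by flasqueness, shows by an explicit cocycle computation that $H^1_{\text{Zar}}(C,G)$ is a quotient of the double coset space $G(K)\backslash \bigoplus_{v} G(K_v)/G(\O_v)$, and then invokes Conrad's finiteness theorem for class numbers of affine group schemes over global function fields. You instead use Skolem--Noether over local rings to present $G$ as $\GL_1(A)/\G_m$, kill the obstruction $H^2_{\text{Zar}}(C,\G_m)$ by Grothendieck vanishing in dimension one, and reduce to the finiteness of the ideal class set of the order $\Gamma(C,A)$ via Jordan--Zassenhaus. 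Both arguments bottom out in an arithmetic class-number finiteness over the Dedekind domain $\F_q[C]$; yours trades Conrad's general theorem for classical order-theoretic algebra and exploits the specific structure of $G$ as the automorphism sheaf of an Azumaya algebra, while the paper's argument is indifferent to that structure (it needs only that $G$ is a suitable affine group scheme) and avoids the bookkeeping of central extensions in nonabelian cohomology.

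Two points you should nail down to make this airtight. First, exactness of $1 \to \G_m \to \GL_1(A) \to G \to 1$ as sheaves on the Zariski site is precisely the Auslander--Goldman theorem that every algebra automorphism of an Azumaya algebra over a local ring is inner (their embedding of outer automorphisms into $\operatorname{Pic}$ of the base); cite that specifically, since the textbook Skolem--Noether statement is over fields or in the \'etale topology. Second, the paper's $H^i_{\text{Zar}}$ are \v{C}ech colimits, whereas Grothendieck vanishing is a statement about derived-functor cohomology, and on a Noetherian space the two can disagree in degree $2$. The issue is repairable here: in the \v{C}ech-to-derived spectral sequence the only term that could obstruct injectivity of $\hat{H}^2_{\text{Zar}}(C,\G_m)$ into the (vanishing) derived $H^2$ is governed by the presheaf $U \mapsto \operatorname{Pic}(U)$, and any line bundle on an open subset of a curve trivializes off a finite set of points, so this term dies in the colimit over refinements. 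With those two patches your argument is complete and valid.
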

\begin{proof}
Let $K:=\F_q(C)$ be the function field of $C$ and let $j:\Spec K \ra C$ be the inclusion of the generic point. For any closed point $v$ of $C$, let $i_v$ be the inclusion of $v$ in $C$. Moreover, let $\O_v$ be the local ring at $v$ and $K_v$ be the fraction field of $\O_v$. Then we have a long exact sequence of sheaves on $C$
\[
\begin{tikzcd}
0 \arrow[r] & G \arrow[r]   & j_*G \arrow[r]   & \bigoplus_{v\in C} i_v^* G(K_v)/G(O_v) \arrow[r]   & 0
\end{tikzcd}
\]
By taking the Zariski cohomology, we get the following long exact sequence
\[
\begin{tikzcd}
G(K) \arrow[r] & \bigoplus_{v\in C}  G(K_v)/G(O_v) \arrow[r] & {H^1_{Zar}(C,G)} \arrow[r] & {H^1(C,j_*G)}
\end{tikzcd}
\]

Since \(j_*G\) is a flasque sheaf, and flasque sheaves are acyclic, we deduce that \(H^1(C,j_*G)=0\). Let $(a_v)_v$ and $(b_v)_v$ be two elements of $ \bigoplus_{v\in C}  G(K_v)/G(O_v)$ such that for some $g\in G(K)$ we have $b_v=ga_v$ for any $v\in C$. Let $M$ be the subset of all $v$ such that either $a_v\notin G(\O_v)$ or $b_v\notin G(\O_v)$. So, $M$ is a finite set $\{v_1,\dots v_m\}$. Let $U_0$ be the complement of $M$ in $C$ and for any $i=1\dots m$ let $U_i= U_0\cup \{v_i\}$. Then the image of $(a_v)_v$ is given by the $1$-cocycle $(c_{ij})$ for the cover $\{U_0,U_1 \dots , U_m\}$ by 
\begin{equation*}
    c_{ij} = \begin{cases}
        a_{v_i}\quad\quad i=0, j\neq 0
        \\
        a_{v_j}^{-1}\quad\quad i\neq 0 , j=0
        \\
        1 \quad\quad \,\,\,\text{Otherwise}
    \end{cases}
\end{equation*}
Similarly the image of $(b_v)_v$ is given by 
\begin{equation*}
    d_{ij} = \begin{cases}
        b_{v_i}\quad\quad i=0, j\neq 0
        \\
        b_{v_j}^{-1}\quad\quad i\neq 0 , j=0
        \\
        1 \quad\quad \,\,\,\text{Otherwise}
    \end{cases}
\end{equation*}
These two $1$-cocyles in $H^1_{Zar}(C,G)$ are equivalent by the $1$-coboundary $(h_i)$ given by
\begin{equation*}
    h_{i} = \begin{cases}
        g\quad\quad i=0
        \\
        1\quad\quad i\neq 0 
    \end{cases}
\end{equation*}
Therefore, we have shown that if two elements of  $\bigoplus_{v\in C}  G(K_v)/G(O_v)$ differ by an element of $G(K)$, then they have the same image in ${H^1_{Zar}(C,G)}$. Hence, it is enough to show that $G(K)\backslash \bigoplus_{v\in C}  G(K_v)/G(O_v)$ is finite. This is nothing but the class group of $G$ over the Dedekind domain $\F_q[C]$. This is finite due to the work of Conrad \cite[Thm $1.3.1$]{conrad2012finiteness}.
\end{proof}

\subsection{Proof of Theorem \ref{preperiodic}}
We break the proof based on $g$, the genus of the family $\pi:S\ra C$.

If $g=0$,  then all of the $\{ S^{(q^m)} \ra C \}$ are conic bundles over $C$. Theorem \ref{f.m. conic bundle} implies that there exist integers $m\neq n$ such that $S^{(q^m)}\cong S^{(q^n)}$ as $C$-schemes. Then by taking $d:=|\,m-n\,|$ and $N:= \min\{m,n\}$, Theorem \ref{preperiodic} follows. 
Now we address the case \( g \ge 1 \). Let \( L/K \) be a Galois extension of function fields with Galois group \( G \).

\begin{lemma}\label{automorphism has finite coh}
Let \( E \) be a smooth, complete curve over \( L \) of genus at least \( 1 \). Then
\[
\#\, H^1(G, \Aut(E)) < \infty.
\]
\end{lemma}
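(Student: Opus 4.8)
The plan is to show that $\#\,H^1(G,\Aut(E))<\infty$ by distinguishing cases according to the genus of $E$, exactly as in the automorphism-finiteness proposition (Proposition~\ref{negative char implies finite aut}), and then combining finiteness of $\Aut(E)$ with finiteness of $G$. The first observation is that $G=\Gal(L/K)$ is a finite group, since $L/K$ is a finite Galois extension of function fields. Therefore, if $\Aut(E)$ were itself a finite group, we would be done immediately: the cocycle set $Z^1(G,\Aut(E))$ consists of functions from the finite set $G$ into the finite set $\Aut(E)$, hence is finite, and $H^1(G,\Aut(E))$ is a quotient of it. So the entire content of the lemma is to handle the possibility that $\Aut(E)$ is infinite, which for a complete curve of genus $\ge 1$ happens only in the genus $1$ case (where $\Aut(E)$ contains the translations by the group $E(L)$, which is infinite).

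First I would dispose of the genus $\ge 2$ case: here $\Aut(E)$ is finite (bounded explicitly by the Hurwitz-type bounds quoted inside Proposition~\ref{negative char implies finite aut}, e.g. $84(g-1)$ in characteristic $0$ and the characteristic-$p$ bounds of Stichtenoth), so by the remark above $H^1(G,\Aut(E))$ is finite. The substance is the genus $1$ case. The strategy there is to use the semidirect product structure
\[
\Aut(E)\cong E(L)\rtimes \Aut(E,O),
\]
where $E(L)$ is the group of translations and $\Aut(E,O)$ is the finite group of automorphisms fixing a chosen origin $O$ (of order at most $24$, as cited in Proposition~\ref{negative char implies finite aut}). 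I would feed this decomposition into the long exact cohomology sequence for noncommutative $G$-groups (the one stated in Theorem~\ref{coh theo} and the associated pointed-set exact sequence), reducing the finiteness of $H^1(G,\Aut(E))$ to the finiteness of $H^1(G,E(L))$ together with the contribution from the finite quotient $\Aut(E,O)$. Since $\Aut(E,O)$ is finite, its $H^1$ is finite, and by the fibre description of the pointed-set sequence it suffices to bound $H^1(G,E(L))$ and the number of twists.

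The main obstacle is thus the finiteness of $H^1(G,E(L))$ with $E(L)$ the (abelian) Mordell--Weil group of the elliptic curve $E$ over the function field $L$. Here $E(L)$ is a finitely generated abelian group by the Lang--N\'eron theorem (the function-field Mordell--Weil theorem), and $G$ is finite, so $H^1(G,E(L))$ is a subquotient of a cohomology group of a finite group acting on a finitely generated abelian group; such groups are finite (indeed killed by $\#G$ and finitely generated, hence finite). The one point requiring care is the isotrivial/constant possibility in positive characteristic, where $E(L)$ could fail to be finitely generated in the naive sense; but since we are in the genus $\ge 1$ situation where we have already arranged (via Proposition~\ref{trivial after separable} and the surrounding setup) to control the field of definition, $E(L)$ is finitely generated and the standard finiteness of Galois cohomology of a finite group with finitely generated coefficients applies. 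I would conclude by assembling the genus cases: in every case $H^1(G,\Aut(E))$ is finite, completing the proof.
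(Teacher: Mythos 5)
Your genus $\ge 2$ case and your overall architecture (genus dichotomy, exact sequence separating translations from origin-fixing automorphisms, noncommutative cohomology, finite generation of the Mordell--Weil group) coincide with the paper's, but your genus $1$ argument has a genuine gap: the decomposition $\Aut(E)\cong E(L)\rtimes \Aut(E,O)$ for ``a chosen origin $O$'' silently assumes $E(L)\neq\emptyset$. A smooth complete curve of genus $1$ over a function field $L$ need not have any $L$-rational point; in that case there is no origin to choose, no semidirect-product splitting, and the translation subgroup of $\Aut(E)$ is $\Jac(E)(L)$ acting on the torsor $E$, not ``$E(L)$''. So the exact sequence you feed into the cohomological machinery does not exist as written. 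This is exactly the subtlety the paper's proof is built around: it chooses a finite Galois extension $H$ of $K$ with $H\supseteq L$ such that $E(H)\neq\emptyset$, forms $E':=E\times_L H$ (now an elliptic curve with origin $O$), proves finiteness of $H^1(G',\Aut_H(E'))$ for $G'=\Gal(H/K)$ using the short exact sequence $1\to E'(H)\to \Aut_H(E')\to \Aut_O(E')\to 1$ together with Theorem \ref{coh theo}, and then transfers the finiteness back to $G$ through the injection $H^1(G,\Aut(E))\hookrightarrow H^1(G',\Aut_H(E'))$ of Theorem \ref{inflation restriction}, which applies because $\Gal(H/L)\trianglelefteq G'$ with quotient $G$. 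Your proof needs this base-change-plus-inflation step (or an alternative argument via the Jacobian torsor structure) to be complete.

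Two secondary points. First, your worry about the isotrivial/constant case is a red herring, and your resolution (appealing to Proposition \ref{trivial after separable} and ``control of the field of definition'') is not the right reason: over a function field whose constant field is finite, the Mordell--Weil group is always finitely generated by Lang--N\'eron, since the trace term consists of points of an abelian variety over a finite field and is therefore finite. The paper simply uses finite generation plus the cited result from Milne's Class Field Theory. Second, when you reduce to $H^1$ of the abelian kernel via the pointed-set exact sequence, finiteness of that single group is not enough: the fibers of $H^1(G,\Aut(E))\to H^1(G,\Aut(E,O))$ are controlled by the $H^1$ of \emph{twisted forms} of the kernel (Theorem \ref{coh theo}), so one must know that all these twisted cohomology groups are finite --- a point the paper states explicitly and that you only gesture at with ``the number of twists''.
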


\begin{proof}
If \( g \ge 2 \), then the automorphism group \( \Aut(E) \) is finite, and the cardinality of \( H^1(G, \Aut(E)) \) is bounded by \( |\Aut(E)|^{|G|} \), hence finite.

Now suppose \( g = 1 \). Let \( H \) be a finite Galois extension of \( K \) with Galois group \( G' \), such that \( H \supseteq L \) and \( E \) has an \( H \)-rational point. Then the base change \( E' := E \times_L H \) is isomorphic to an elliptic curve with a distinguished origin \( O \). In this case, we have a short exact sequence:
\[
\begin{tikzcd}
1 \arrow[r] & E'(H) \arrow[r] & \Aut_H(E') \arrow[r] & \Aut_O(E') \arrow[r] & 1
\end{tikzcd}
\]
Here, \( \Aut_O(E') \) denotes the subgroup of automorphisms of \( E' \) that fix the origin \( O \). By Proposition \ref{negative char implies finite aut}, this group is finite. Furthermore, the group \( E'(H) \) is finitely generated and abelian, so \( H^1(G', E'(H)) \), as well as all its twisted forms, are finite by \cite[2, Cor $1.32$]{milne2011class}. Then, by Theorem \ref{coh theo}, the cohomology group \( H^1(G', \Aut_H(E')) \) is finite as well.

Let \( G'' \) denote the Galois group of $H/L$. Because \( L/K \) is also Galois, we have \( G'' \trianglelefteq G' \), and \( G'/G'' \cong G \). Then, by Theorem \ref{inflation restriction}, there is an injection
\[
H^1(G, \Aut(E)) \xhookrightarrow{\quad\quad} H^1(G', \Aut_H(E')).
\]
This completes the proof.
\end{proof}

Let $C'\ra C$ be the Galois extension obtained from Proposition \ref{trivial
after separable} and let $D$ be such that $S\times_C C' \cong D\times C'$. Let $G$ be its Galois group and $\F_{q^k}$ be the field of definition of $C'$. We have the following fiber product diagram 
\begin{equation*}
\begin{tikzcd}
C' \arrow[rr, "F_{C'}"] \arrow[d] &  & C' \arrow[d] \\
C \arrow[rr, "F_C^k"']            &  & C           
\end{tikzcd}
     \end{equation*}
and by considering the further fiber product with $\pi:S\ra C$, we have 

\[
\begin{tikzpicture}[commutative diagrams/every diagram]
\node (P1) at (0,0) {$C'$};
\node (P2) at (3,0) {$C'$} ;
\node (P3) at (3,3) {$D\times C'$};
\node (P4) at (0,3) {$D\times C'$};
\node (Q1) at (5, -2) {$C$};
\node (Q2) at (8, -2) {$C$};
\node (Q3) at (8, 1) {$S$};
\node (Q4) at (5, 1) {$S^{(q^k)}$};
\path[commutative diagrams/.cd, every arrow, every label]
(P1) edge node {$F_{C'}$}(P2)
(P4) edge[shorten >=3pt, shorten <=3pt] node {} (P3)
(P4) edge[shorten >=3pt, shorten <=3pt] node {}(P1)
(P3) edge[shorten >=3pt, shorten <=3pt] node {}(P2)
(P4) edge[preaction={draw=white, line width=2.5pt}] node {}(Q4)
(P1) edge node {}(Q1)
(P2) edge node {}(Q2)
(P3) edge node {}(Q3)
(Q1) edge node {$F_C^k$}(Q2)
(Q4) edge[shorten >=3pt, shorten <=3pt] node {} (Q3)
(Q3) edge[shorten >=3pt, shorten <=3pt] node {}(Q2)
(Q4) edge[shorten >=3pt, shorten <=3pt, preaction={draw=white, line width=2.5pt}] node {}(Q1);
\end{tikzpicture}\]

Therefore, all the $\{S^{(q^{ki})}\}_{i \in \mathbb{N}}$ are isotrivial families over $C$, all of which become isomorphic over $C'$ to a single family $D\times C'$. Hence, all $S^{(q^{ki})}$ correspond to a $1$-cocycle in $H^1(G, \Aut_{C'}(D\times C'))$. 

Let $E$ be the generic fiber of  $ \pi_{C'} : S\times_C C' \rightarrow C'$, which is isomorphic to the base change of 
$D$ to the function field of $C'$. Theorem \ref{ext aut} implies  $\Aut_{C'}(D\times C')\cong \Aut(E)$, and by  Lemma \ref{automorphism has finite coh} both cohomology groups $H^1(G, \Aut_{C'}(D\times C'))$ and $H^1(G, \Aut(E))$ are finite. Hence, there exists $i\neq j$ such that 
\begin{equation*}
    S^{(q^{ki})}\cong S^{(q^{kj})}
\end{equation*}
as schemes over $C$ and the family is preperiodic.

\begin{remark}
The conic bundle defined by the equation
\[
X^2 + tY^2 + Z^2 + tXY + tYZ + XZ = 0
\]
over the ring $\mathbb{F}_2[t]$ provides an example of a family of genus \( 0 \) curves that is preperiodic  but not periodic. 

This conic bundle is not trivial over $\mathbb{F}_2[t]$. In particular, it does not admit a section over $\Spec\,\mathbb{F}_2[t]$, and thus is not isomorphic to the projective line over this base. However, it becomes trivial after a finite base change: namely, over $\mathbb{F}_2[\sqrt{t}]$ the conic does admit a rational point, so it becomes isomorphic to $\mathbb{P}^1$ over $\Spec\,\mathbb{F}_2[\sqrt{t}]$.

Note that $\mathbb{F}_2[\sqrt{t}]$ has a trivial class group. By Section \ref{Proj space pver dede}, every conic with a rational point is isomorphic to $\mathbb{P}^1$. As a result, after one pull-back by Frobenius, the family becomes trivial and isomorphic to the product $\mathbb{P}^1 \times \Spec\,\mathbb{F}_2[\sqrt{t}]$.
\end{remark}

\section{Galois descent}\label{Descent}
In this section, we reduce Theorem 1.2 to Theorem 1.1 using Galois descent.

First, we show that for any such extension \( C' \to C \), there is a natural finite map of quotient sets:
\[
\Aut_C(S) \backslash \Omega_n(S/C) \longrightarrow \Aut_{C'}(S_{C'}) \backslash \Omega_n(S_{C'}/C'),
\]
where \( S_{C'} := S \times_C C' \). As a result, each equivalence class over \( C' \) intersects only finitely many classes over \( C \). Consequently, if Theorem \ref{etale theo} holds over \( C' \), it must also hold over \( C \).

We then show that there exists a finite, generically Galois extension \( E/C \) such that the elements of \( \Omega_n(S/C) \) map into \( \Omega_{n,E}(S_E/E) \) under base change. This reduces the proof of Theorem \ref{etale theo} to proving Theorem \ref{section theo}. Finally, by Proposition \ref{trivial after separable}, it suffices to establish Theorem \ref{section theo} in the trivial and non-isotrivial cases.
We exclude the case \( g = 0 \) and \( n = 1 \), as this case has already been treated for Theorems \ref{section theo} and \ref{etale theo} in Lemma \ref{annoying case}. The rest of this section is devoted to analyzing the remaining cases.

\begin{proposition}\label{prop that div becomes sections}
    There exists a finite, generically  Galois cover of curves \( E \to C \) such that for any smooth and proper morphism \( \pi: S \to C \) with geometrically connected generic fiber, we have
    \[
        \Omega_{n}(S/C) \xhookrightarrow{\quad\quad} \Omega_{n,E}(S_E/E),
    \]
    where \( S_E := S \times_C E \).
\end{proposition}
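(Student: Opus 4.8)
The plan is to produce a single cover $E\to C$ that completely splits every degree-$n$ étale cover that can arise as a divisor in $\Omega_n(S/C)$, uniformly in $S$, and only afterwards to check that this forces the claimed injection. First I would reinterpret the objects: an element $D\in\Omega_n(S/C)$ is a closed subscheme that is finite étale of degree $n$ over $C$, hence a finite \emph{separable} cover $D\to C$ unramified over every closed point of $C$; passing to smooth completions, $\overline D\to\overline C$ is ramified only over the finite set $\Sigma:=\overline C\setminus C$. For a cover $E\to C$ the base change $D_E:=D\times_C E$ is again finite étale of degree $n$ over $E$, and it represents an element of $\Omega_{n,E}(S_E/E)$ precisely when $D_E\cong\bigsqcup_{i=1}^n E$, i.e. when $E$ splits $D$ completely. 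Thus it suffices to exhibit one finite generically Galois $E\to C$ that splits every cover $D$ occurring in any $\Omega_n(S/C)$; injectivity of $\Omega_n(S/C)\hookrightarrow\Omega_{n,E}(S_E/E)$ is then automatic, since $E\to C$ is faithfully flat and a horizontal divisor is recovered from its pullback by descent.

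The heart of the argument is a finiteness statement: I claim there are only finitely many covers $D\to C$ that can occur, as $S$ ranges over all smooth proper families of the (fixed) genus $g\ge 1$ and $D$ over $\Omega_n(S/C)$. By Riemann--Hurwitz (Theorem \ref{Rim-Hur formula}), controlling the genus $g_{\overline D}$ is equivalent to bounding $\deg\mathfrak d_{D/C}$, the degree of the different, which is supported on $\Sigma$. Granting a uniform bound $\deg\mathfrak d_{D/C}\le B$ with $B=B(C,n,g)$ \emph{independent of $S$}, the function-field form of the Hermite--Minkowski theorem---Rosen's finiteness result \cite{michael.rosen.2017} for separable extensions of bounded discriminant, applied after a fixed finite map $C\to\mathbb P^1$---shows that the set $\mathcal D_B$ of all separable degree-$n$ covers of $C$ with $\deg\mathfrak d\le B$ is finite. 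Every occurring $D$ then lies in $\mathcal D_B$. I would then take $E\to C$ to be the Galois closure of the compositum of the finitely many covers in $\mathcal D_B$; this is a single finite generically Galois cover, and by construction it splits every $D\in\mathcal D_B$, hence every $D$ that can occur for any $S$.

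The main obstacle is establishing the uniform different bound $B=B(C,n,g)$, independent of the family $S$. Since $D\to C$ is étale over all of $C$, all ramification is concentrated over the finitely many points $Q\in\Sigma$, so it suffices to bound, for each such $Q$, the different $d(L_w/\kappa_Q)$ of each completion $L_w$ of the function field of $D$ over the local field $\kappa_Q$. The plan is to read this off the reduction of $S$ at $Q$: over the punctured disc at $Q$ the family is a smooth genus-$g$ curve, and the multisection corresponds to an $L_w$-point that extends, over the minimal regular model whose special fibre has arithmetic genus $g$, to a section meeting the special fibre. The ramification such a point can force should be governed by the degeneration of a genus-$g$ curve---its tame part bounded by $n$, and its wild part by the conductor/Swan invariants of the degeneration, which are bounded purely in terms of $g$ (for instance through the bounded-degree extension achieving semistable reduction). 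Summing the finitely many local contributions over $\Sigma$ would yield $B(C,n,g)$. I expect this local-to-global ramification estimate, and in particular the uniformity of the wild part over all families of genus $g$, to be the technically delicate point; a more global variant would instead bound $g_{\overline D}$ directly using the intersection theory of multisections on $\overline S$ (in the spirit of Lemma \ref{bounded intersection of graph lemma}) together with the hyperbolicity input of Samuel's theorem (Theorem \ref{samuel theo}).

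Finally, the genus-$0$ case genuinely escapes this scheme and must be excluded here: when the fibres are conics the generic fibre is rational and admits points over arbitrarily ramified extensions, so no single $E$ can split all the resulting covers (this is already visible for $\mathbb P^1\times\mathbb A^1$, where Artin--Schreier multisections of unbounded conductor occur). Accordingly I would apply the present proposition only for $g\ge 1$ and dispatch $g=0$ separately through the $\mathbb P^1$-bundle and conic-bundle analysis of Section \ref{Proj space pver dede} (Lemma \ref{annoying case} and Theorem \ref{f.m. conic bundle}), exactly as the surrounding text arranges.
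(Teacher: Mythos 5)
Your proof shares its skeleton with the paper's: both arguments reduce the proposition to showing that only finitely many degree-$n$ \'etale covers $D \to C$ can occur (uniformly in $S$), and then take the Galois closure of the compositum of this finite list. The difference is the finiteness mechanism. The paper decomposes $A \in \Omega_n(S/C)$ into irreducible components $D_i$, asserts $\chi(D_i) = a_i \chi(C)$ via Riemann--Hurwitz (Theorem \ref{Rim-Hur formula}) with empty ramification divisor, bounds $\#(\overline{D_i}\setminus D_i)$ by $a_i \cdot \#(\overline{C}\setminus C)$, concludes that $g_{D_i}$ is bounded, and then invokes finiteness of curves of bounded genus over a finite field \cite{de2000counting} together with Lemma \ref{f. m. sep map} for the maps to $\overline{C}$. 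You instead bound the degree of the different and appeal to Rosen's Hermite--Minkowski analogue \cite{michael.rosen.2017}; these are essentially equivalent reductions, since by Riemann--Hurwitz bounding $g_{\overline{D}}$ and bounding $\deg \mathfrak{d}_{D/C}$ amount to the same thing once the degree is fixed.

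The genuine gap is that your central lemma, the uniform bound $\deg\mathfrak{d}_{D/C} \le B(C,n,g)$, is never proved, and the route you sketch for it cannot work: the wild boundary ramification of an \'etale multisection is \emph{not} controlled by the genus of the fibers via semistable reduction. Your own genus-$0$ example already shows this ($y^p - y = x^N$ defines a horizontal divisor in $\mathbb{P}^1 \times \mathbb{A}^1$, \'etale of degree $p$ over $\mathbb{A}^1$, of genus $(p-1)(N-1)/2$, unbounded), but the phenomenon is not confined to $g=0$: fix an elliptic curve $E_0/\F_q$ and a finite map $\phi \colon E_0 \to \mathbb{P}^1$ \'etale over $\mathbb{A}^1$ (such maps exist in characteristic $p$, necessarily wildly branched over $\infty$), and for $d$ prime to $p$ let $T_d \subset E_0 \times \mathbb{A}^1$ be the fiber product $\{(e,x) : \phi(e) = x^d\}$. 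Each $T_d$ is a horizontal divisor, finite \'etale of degree $n = \deg\phi$ over $C = \mathbb{A}^1$, and since the Swan conductor at $\infty$ scales by $d$ under the tame base change $x \mapsto x^d$, the genera of the components of $T_d$ grow without bound; as a cover split by $E$ is dominated by $E$, no single finite cover $E \to C$ can split all the $T_d$. So the bound $B(C,n,g)$ you hope for is false in general, and some tameness hypothesis (e.g.\ $n < p$, or tame ramification of $D$ at the boundary) is genuinely needed. You should also be aware that the delicate point you flagged is precisely where the paper's own proof is too quick: the identity $\chi(D_i) = a_i\chi(C)$ for an \'etale map of \emph{open} curves is the tame Riemann--Hurwitz formula and acquires a Swan-conductor correction (Grothendieck--Ogg--Shafarevich) in characteristic $p$, as the Artin--Schreier divisors above violate it. So your hesitation was well founded, and your instinct to treat $g = 0$ separately identifies a real obstruction --- but one that affects all genera, rather than a case that can simply be split off.
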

\begin{proof}
    Let \( A \in \Omega_n(S/C) \). Then \( A \) is a union of irreducible curves \( D_1, \dots, D_r \) inside \( S \). Since \( A \in \Omega_n(S/C) \), the restriction of \( \pi: S \to C \) to each \( D_i \) induces a finite étale map
    \[
        \pi_i: D_i \to C
    \]
    of degree \( a_i \) for each \( i = 1, \dots, r \). Each such \( D_i \) is defined over some finite extension \( \F_{q^j} \) where \( j \leq n \), and the map \( \pi_i \) is unramified. By the Riemann–Hurwitz formula, we have
    \[
        \chi(D_i) = a_i \chi(C).
    \]
    Moreover, \( \pi_i \) extends to a finite morphism \( \overline{D_i} \to \overline{C} \) of the same degree \( a_i \). The induced map
    \[
        \overline{D_i} \setminus D_i \to \overline{C} \setminus C
    \]
    is also finite of degree \( a_i \), so
    \[
        \#\, \overline{D_i}(\overline{\F_q}) \setminus D_i(\overline{\F_q}) \leq a_i \cdot \#\, \left( \overline{C}(\overline{\F_q}) \setminus C(\overline{\F_q}) \right).
    \]
    On the other hand, we have
    \[
        \chi(D_i) = 2 - 2g_{D_i} - \#\, \left( \overline{D_i}(\overline{\F_q}) \setminus D_i(\overline{\F_q}) \right).
    \]
    Since \( C \) is fixed and each \( a_i \in \{1, \dots, n\} \), it follows from the above equations that both \( \chi(D_i) \) and \( g_{D_i} \) are bounded. Over a finite field, there are only finitely many isomorphism classes of curves of bounded genus \cite{de2000counting}, and by Lemma \ref{f. m. sep map}, only finitely many separable maps from such curves to \( \overline{C} \).

    Taking the fiber products of all such maps and forming their Galois closure yields a finite Galois cover \( E \to C \) such that all the curves \( D_i \) become sections over \( E \). This completes the proof.
\end{proof}

Throughout the remainder of this section, we assume either $g\neq 0$ or $n\neq 1$.  Fix a finite, generically Galois morphism of smooth curves \( C' \to C \) with Galois group $G$, and let \( S' := S \times_C C' \). Note that we do not assume \( C' \) is defined over \( \F_q \). The property of being étale is preserved under base change. Hence, for any \( A \in \Omega_n(S/C) \), its pullback 
\( A' := A \times_C C' \subset S' \) lies in \( \Omega_n(S'/C') \). This yields a well-defined map
\[
    \rho: \Omega_n(S/C) \longrightarrow \Omega_n(S'/C').
\]

\begin{definition}
Let \( A \in \Omega_n(S/C) \). Define:
\begin{align*}
    \Sigma_A &:= \Aut_{C'}(S') \cdot \rho(A) \,\cap\, \rho\big(\Omega_n(S/C)\big), \\
    M_A &:= \{ \psi \in \Aut_{C'}(S') \mid \psi \cdot \rho(A) = \rho(A) \},
\end{align*}
where the group \( \Aut_{C'}(S') \) acts on \( \Omega_n(S'/C') \) via its action on \( S' \).
\end{definition}

\begin{lemma}\label{injective of multi orbit to coh}
    For any \( A \in \Omega_n(S/C) \), there is a natural injection
    \[
        \Aut_C(S) \backslash \Sigma_A \xhookrightarrow{\quad\quad} H^1(G, M_A).
    \]
\end{lemma}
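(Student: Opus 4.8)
The plan is to run the standard Galois twisting argument, realizing $\Sigma_A$ as a set of $G$-forms of $\rho(A)$ whose classes are measured by the cohomology of the stabilizer $M_A$. Write $G=\Gal(C'/C)$ and let it act on $S'=S\times_C C'$ through the second factor; this induces a semilinear action $\psi\mapsto {}^{\sigma}\psi:=(1\times\sigma)\circ\psi\circ(1\times\sigma)^{-1}$ on $\Aut_{C'}(S')$, in the convention of the Background section. The first point I would record is that $\rho(A)=A\times_C C'$ is $G$-invariant, being pulled back from $C$, and more generally that every element of $\rho(\Omega_n(S/C))$ is $G$-invariant. Since $G$ fixes $\rho(A)$ it preserves the stabilizer $M_A$, so $M_A$ is a $G$-group and $H^1(G,M_A)$ is defined.

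Next I would build the map on the level of cocycles. Given $B\in\Sigma_A$, choose $\psi\in\Aut_{C'}(S')$ with $B=\psi\cdot\rho(A)$. Because $B$ lies in $\rho(\Omega_n(S/C))$ it is $G$-invariant, so for every $\sigma$ one gets ${}^{\sigma}\psi\cdot\rho(A)={}^{\sigma}B=B=\psi\cdot\rho(A)$, whence $c_\sigma:=\psi^{-1}\,{}^{\sigma}\psi\in M_A$. A one-line computation gives $c_{\sigma\tau}=c_\sigma\cdot{}^{\sigma}c_\tau$, so $c\in Z^1(G,M_A)$. Replacing $\psi$ by $\psi m$ with $m\in M_A$ replaces $c_\sigma$ by $m^{-1}c_\sigma\,{}^{\sigma}m$, a coboundary, so the class $[c]\in H^1(G,M_A)$ depends only on $B$. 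This produces a well-defined map $\Sigma_A\to H^1(G,M_A)$.

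Then I would check that it factors through $\Aut_C(S)\backslash\Sigma_A$ and is injective. The group $\Aut_C(S)$ maps into the fixed part $\Aut_{C'}(S')^G$ by base change (an automorphism over $C$ is $G$-invariant over $C'$), and for $\phi\in\Aut_C(S)$ the element $\phi\cdot B=(\phi\psi)\cdot\rho(A)$ has cocycle $\psi^{-1}\phi^{-1}\,{}^{\sigma}\phi\,{}^{\sigma}\psi=\psi^{-1}\,{}^{\sigma}\psi=c_\sigma$ since ${}^{\sigma}\phi=\phi$; thus the map descends to the quotient. For injectivity, suppose $B_1=\psi_1\rho(A)$ and $B_2=\psi_2\rho(A)$ give cohomologous cocycles, say $c^{(1)}_\sigma=m^{-1}c^{(2)}_\sigma\,{}^{\sigma}m$ with $m\in M_A$. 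Set $\phi:=\psi_1 m^{-1}\psi_2^{-1}\in\Aut_{C'}(S')$. Then $\phi\cdot B_2=\psi_1 m^{-1}\rho(A)=\psi_1\rho(A)=B_1$ because $m\in M_A$, and a direct substitution using ${}^{\sigma}\psi_i=\psi_i c^{(i)}_\sigma$ together with the coboundary relation shows ${}^{\sigma}\phi=\phi$ for all $\sigma$.

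The step I expect to be the crux is this last descent: from ${}^{\sigma}\phi=\phi$ I must conclude that $\phi$ arises from a genuine element of $\Aut_C(S)$. I would obtain this by faithfully flat Galois descent along $C'\to C$, passing to the generic fibre where $C'/C$ is honestly Galois with group $G$ (so that the ramification of $C'\to C$ causes no trouble) and invoking Theorem \ref{ext aut} to identify $\Aut_C(S)$ with the automorphism group of the generic fibre; a $G$-invariant automorphism of the generic fibre of $S'$ descends to one of the generic fibre of $S$, which then extends over all of $C$. Granting this, $\phi\in\Aut_C(S)$ places $B_1$ and $B_2$ in the same $\Aut_C(S)$-orbit, establishing the injection $\Aut_C(S)\backslash\Sigma_A\hookrightarrow H^1(G,M_A)$.
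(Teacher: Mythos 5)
Your argument is, step for step, the paper's own proof: the same cocycle $\sigma\mapsto\psi^{-1}\,{}^{\sigma}\psi$ valued in $M_A$, the same check that changing the choice of $\psi$ alters the cocycle by a coboundary, the same observation that translating $B$ by $\Aut_C(S)$ leaves the class unchanged, and the same injectivity computation producing a $G$-invariant element of $\Aut_{C'}(S')$ carrying one divisor to the other. The only place you go beyond the paper is the last step: the paper simply asserts that a $G$-invariant automorphism of $S'$ over $C'$ lies in $\Aut_C(S)$, whereas you try to justify it by descending along the generic fibre and then extending via Theorem \ref{ext aut}. One caveat there: Theorem \ref{ext aut} requires the generic fibre to have genus at least $1$, but this lemma is also used (through Proposition \ref{finite cohomology of stab}) when $g=0$ and $n\ge 2$, so your extension step does not cover that case. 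A uniform justification that avoids the genus hypothesis is to use quotients instead of the generic fibre: since $C$ is normal and $C'\to C$ is finite and generically Galois, $C=C'/G$; since $S\to C$ is flat, taking $G$-invariants commutes with pulling back along $S\to C$, so $S=S'/G$. Hence the $G$-invariant composite of $\phi$ with the projection $p\colon S'\to S$ factors through $S$, and the same applies to $\phi^{-1}$, yielding the descended element of $\Aut_C(S)$ in every genus.
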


\begin{proof}
To simplify notation, we do not distinguish between \( \rho(A) \) and \( A \). Let \( B \in \Sigma_A \), and suppose \( B = \psi \cdot A \) for some \( \psi \in \Aut_{C'}(S') \). Since both \( A \) and \( B \) are defined over \( K(C) \), for any \( \sigma \in G \), we have
\[
    B=\sigma(B) = \sigma(\psi \cdot A) = \sigma \psi \cdot \sigma A = \sigma \psi \cdot A,
\]
so that \( \psi^{-1} \sigma \psi \in M_A \). This defines a 1-cocycle
\[
    \Phi_{B,\psi}: G \xrightarrow{\quad} M_A, \quad \sigma \mapsto \psi^{-1} \sigma \psi.
\]

Now suppose \( B = \psi_1 \cdot A = \psi_2 \cdot A \). Then \( \psi_1^{-1} \psi_2 \in M_A \), so we compute:
\[
    \Phi_{B,\psi_2}(\sigma) = \psi_2^{-1} \sigma \psi_2 = (\psi_1^{-1} \psi_2)^{-1} \cdot \psi_1^{-1} \sigma \psi_1 \cdot \sigma(\psi_1^{-1} \psi_2),
\]
which shows \( \Phi_{B,\psi_1} \) and \( \Phi_{B,\psi_2} \) are cohomologous. Thus, the cohomology class \( [\Phi_{B,\psi}] \in H^1(G, M_A) \) only depends on \( B \in \Sigma_A \), and we may write \( \Phi_B := [\Phi_{B,\psi}] \).

We obtain a well-defined map:
\[
    \Phi: \Sigma_A \xrightarrow{\quad} H^1(G, M_A), \quad B \mapsto \Phi_B.
\]

Now, if \( B, C \in \Sigma_A \) lie in the same \( \Aut_C(S) \)-orbit, say \( C = R \cdot B \) for some \( R \in \Aut_C(S) \), and \( B = \psi \cdot A \), then \( C = R\psi \cdot A \). The cocycle associated to \( C \) is:
\[
    \Phi_C(\sigma) = (R\psi)^{-1} \sigma(R\psi) = \psi^{-1} R^{-1} \sigma R \cdot \sigma \psi = \psi^{-1} \sigma \psi = \Phi_B(\sigma),
\]
since \( R \in \Aut_C(S) \), it is stable under the action of \( \sigma \in G \). This implies \( \Phi \) descends to a map
\[
    \Aut_C(S) \backslash \Sigma_A \to H^1(G, M_A).
\]

To prove injectivity, suppose \( B = \psi_1 \cdot A \), \( C = \psi_2 \cdot A \), and \( \Phi_B = \Phi_C \). Then by definition of cohomologous cocycles, there exists \( N \in M_A \) such that:
\[
    \psi_1^{-1} \sigma \psi_1 = N^{-1} \psi_2^{-1} \sigma \psi_2 \sigma(N) \quad \forall \sigma \in G.
\]
Rewriting, we get:
\[
    \psi_2 N \psi_1^{-1} = \sigma(\psi_2 N \psi_1^{-1}) \quad \forall \sigma \in G,
\]
so \( \psi_2 N \psi_1^{-1} \in \Aut_C(S) \), and
\[
    C = \psi_2 \cdot A = (\psi_2 N \psi_1^{-1}) \cdot B,
\]
which shows \( C \) and \( B \) lie in the same \( \Aut_C(S) \)-orbit. This proves injectivity of the induced map
\[
    \Aut_C(S) \backslash \Sigma_A \xhookrightarrow{\quad\quad}H^1(G, M_A).
\]
\end{proof}

\begin{proposition}\label{finite cohomology of stab}
    For any \( A \in \Omega_{n}(S/C) \), the group \( H^1(G, M_A) \) is finite, and its order is bounded by a constant that  depends only on \( n \), \( g \), \( G \), and \( C \) (not on $A$ nor the family $S$ over $C$).
\end{proposition}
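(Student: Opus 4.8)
The plan is to reduce the statement to the single assertion that the group $M_A$ is \emph{finitely generated}; once this is known, finiteness of $H^1(G,M_A)$ will follow by dévissage through the finite and finitely-generated-abelian layers of $M_A$, with an explicit bound. I would record $M_A$ as the stabiliser, inside $\Aut_{C'}(S')$, of the degree-$n$ divisor $\bar D$ cut out on $S'$ by $\rho(A)$, with $G=\Gal(K'/K)$ acting through its action on $C'$. Via Theorem~\ref{ext aut} (for $g\ge1$) and the matrix description of Section~\ref{Proj space pver dede} (for $g=0$), I would describe $\Aut_{C'}(S')$ concretely, noting crucially that, because $C'$ is \emph{affine}, its elements are \emph{integral} automorphisms rather than arbitrary automorphisms of the generic fibre $E$.

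The main device is the permutation action of $M_A$ on the $n$ geometric points of $\bar D$, yielding a homomorphism $M_A\to S_n$ with image of order at most $n!$ and kernel the pointwise stabiliser $M_A^0$. First I would show that $M_A^0$ is finite, of order bounded only in terms of $n$ and $g$, outside the single case $g=0,\ n=2$. Indeed, if $g\ge2$ then $\Aut(E)$ is already finite of order bounded in terms of $g$ by (the argument of) Proposition~\ref{negative char implies finite aut}; if $g=1$ then any automorphism fixing a point of $\bar D$ lies in the finite group of automorphisms fixing that point (a translation with a fixed point is trivial), of order at most $24$; and if $g=0$ with $n\ge3$ then, since $\PGL_2$ acts sharply $3$-transitively, an automorphism fixing three of the points is the identity, so $M_A^0=1$. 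In each of these cases $M_A$ is finite with $|M_A|\le n!\cdot|M_A^0|$ bounded in terms of $n,g$, whence
\[
|H^1(G,M_A)|\le |Z^1(G,M_A)|\le |M_A|^{|G|}
\]
is a bound of the required shape.

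The only case in which $M_A$ is infinite, and the main obstacle, is $g=0,\ n=2$. Here $M_A^0$ is the one-dimensional torus $T$ fixing the two points of $\bar D$, and $M_A/T$ embeds into $S_2$. Over the full function field $K'$ the group $T(K')$ would be a form of $K'^\times$, whose cohomology can be infinite (for a norm-one torus one gets a quotient of $K^\times/\Nm L^\times$, which is infinite over a global field); the point is that finiteness is rescued not by rationality but by the integrality forced by the affine base. Since $C'$ has coordinate ring $R'$ and the automorphisms in $\Aut_{C'}(S')$ are integral, the matrix description in Section~\ref{Proj space pver dede} shows that the diagonal automorphisms fixing the two points have entries that are \emph{units} of $R'$ (or of the quadratic $R'$-algebra cut out by $\bar D$ when the two points are conjugate). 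Thus $T(R')$ embeds into the unit group of $R'$ or of a quadratic $R'$-algebra, which is finitely generated by Dirichlet's unit theorem. Hence $M_A^0=T(R')$ is finitely generated, and so is $M_A$.

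Finally, to bound $H^1(G,M_A)$ in this remaining case I would use the exact sequence of $G$-groups $1\to T(R')\to M_A\to W\to1$ with $W\hookrightarrow S_2$ finite, together with the twisting description of Theorem~\ref{coh theo}. The base $H^1(G,W)$ is finite, and every fibre of the associated map is a quotient of some $H^1(G,{}_fT(R'))$, which is finite because $T(R')$ is finitely generated abelian and its twists—obtained by composing the $G$-action with an automorphism of $T$—remain finitely generated. This bounds $|H^1(G,M_A)|$ in terms of $|G|$, $|W|$, and the rank of $R'^\times$, i.e. in terms of $n$, $g$, $G$ and $C$, uniformly in $A$ and in the family $S\to C$. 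The heart of the proof is therefore the genus-zero, degree-two case, where the affineness of the base is exactly what replaces the full multiplicative group of $K'$ by the finitely generated unit group and makes the cohomology finite.
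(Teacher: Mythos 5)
Your argument runs parallel to the paper's in its essential mechanism: in every case except $g=0$, $n=2$ the stabilizer $M_A$ is finite with order bounded in terms of $n$ and $g$ alone (the paper gets this uniformly from Proposition~\ref{negative char implies finite aut} applied to the complement $S_0\setminus\rho(A)$, whose Euler characteristic is $2-2g-n$; your pointwise-stabilizer analysis via the map from $M_A$ to the symmetric group on the $n$ geometric points is equivalent content), and in the exceptional case $M_A$ is an extension of a subgroup of $\Z/2\Z$ by a finitely generated unit group, where finiteness of $H^1$ follows from \cite[2, Cor 1.32]{milne2011class} together with the twisting formalism of Theorem~\ref{coh theo}. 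The genuine structural difference is that the paper \emph{first} enlarges the cover: using Proposition~\ref{prop that div becomes sections}, the Hilbert class field, and the inflation--restriction injection of Theorem~\ref{inflation restriction}, it reduces to the situation where $\rho(A)$ is a union of sections and the Picard group of the base is trivial, so that in the exceptional case one may choose coordinates with $\rho(A)=\{[0:1],[1:0]\}$ and identify $M_A$ exactly with $(\G_m\rtimes\Z/2\Z)(R')$. You omit this reduction.

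That omission creates a real gap in the one case that matters. When $g=0$, $n=2$ and the degree-$2$ divisor $\bar D$ is irreducible, the generic fiber of $S'$ is a conic possessing a degree-$2$ closed point, and this does \emph{not} force a rational point (Springer's theorem applies only to odd-degree points); so $S'$ need not be a $\mathbb{P}^1$-bundle at all, and the matrix description of Section~\ref{Proj space pver dede} that you invoke is unavailable. Even granting a torus description of the pointwise stabilizer, its group of integral elements is the group of $C'$-points of the torus $R_{\bar D/C'}\G_m/\G_m$, which maps to $\tilde R^\times/R'^\times$ (with $\tilde R$ the coordinate ring of $\bar D$) only up to a finite discrepancy controlled by the kernel of $\mathrm{Pic}(R')\to\mathrm{Pic}(\tilde R)$; your assertion that it ``embeds into the unit group of the quadratic $R'$-algebra'' is not literally correct, though finite generation --- hence the conclusion --- survives because $\mathrm{Pic}(R')$ is finite. (Also, the relevant statement is the $S$-unit theorem for function fields rather than Dirichlet's unit theorem, and the resulting bound then involves invariants of $C'$ beyond $G$ and $C$, so the claimed uniformity needs a word.) These are precisely the complications the paper's preliminary reduction is designed to eliminate; your proof becomes correct either by adding that reduction, or by carrying out the torus and Picard-group analysis above in full.
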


\begin{proof}
Let \( E \to C \) be the finite Galois cover obtained by applying Proposition \ref{prop that div becomes sections} to the curve \( C \). We may further enlarge \( E \) so that the function field of \( E \) contains the Hilbert class field of \( K := \F_q(C) \). Let \( E' := E \times_C C' \), so that \( E' \to C \) is also a finite Galois cover.

Let \( G_1 := \Gal(E'/C) \), \( G_2 := \Gal(E'/C') \). Then \( G_2 \trianglelefteq G_1 \) and \( G_1/G_2 \cong G := \Gal(C'/C) \). By Theorem \ref{inflation restriction}, we obtain an injective map:
\[
    H^1(G, M_A) \xhookrightarrow{\quad\quad} H^1(G_1, M'_A),
\]
where $M'_A$ is the stabilizer of $A$ inside $S^{''} := S\times_C E'$. If we show that $H^1(G_1, M'_A)$ is finite, then $ H^1(G,M_A) $ is finite as well. Therefore, for the rest of the proof we may assume $\rho(A)\in \Omega_{n,C'}(S'/C')$ and the function field of $C'$ contains the Hilbert class field of $C$. 

Let \( S_0 \) denote the generic fiber of \(  S' \to C' \), which is a smooth projective curve over \( K \). Let \( g \) be its genus and \( \chi(S_0) = 2 - 2g \) its Euler characteristic.

The stabilizer \( M_A \subset \Aut_{C'}(S') \) consists of automorphisms fixing  \( \rho(A) \). Any such automorphism restricts to an automorphism of the complement \( S_0 \setminus  \rho(A) \). Note that $\chi(S_0) -n$ is the Euler characteristic of $S_0\backslash\rho(A)$.
We analyze three cases:
\begin{itemize}
    \item \textbf{\( \chi(S_0) - n < 0 \):} The complement \( S_0 \setminus \rho(A) \) has negative Euler characteristic, so its automorphism group is finite by Proposition \ref{negative char implies finite aut} and depends only on \( g \) and \( n \). Then \( M_A \) is finite, and so \( H^1(G, M_A) \) is bounded by  \( |M_A|^{|G|} \).

    \item \textbf{\( \chi(S_0) - n = 0 \):} Then \( \chi(S_0) = n = 2 \), so \( S_0 \cong \mathbb{P}^1 \), and \( A \) is a degree 2 divisor. We can choose coordinates so that \( \rho(A) = \{ [0:1], [1:0] \} \). Then \( M_A \cong (\G_m \rtimes \mathbb{Z}/2\mathbb{Z})(R') \), where \( R' \) is the coordinate ring of \( C' \). There is a short exact sequence:
    \[
        1 \to \G_m(R') \to (\G_m \rtimes \mathbb{Z}/2\mathbb{Z})(R') \to \mathbb{Z}/2\mathbb{Z} \to 1.
    \]
    The group  \( \G_m(R') \) is a finitely generated abelian group and \( \mathbb{Z}/2\mathbb{Z} \) is a finite group. Hence \( H^1(G,\G_m(R')) \), as well as all its twisted forms, are finite by \cite[2, Cor $1.32$]{milne2011class}. Then, by Theorem \ref{coh theo}, the cohomology group \( H^1(G, M_A) \) is finite as well.
    
    \item \textbf{\( \chi(S_0) - n = 1 \):} Then \( \chi(S_0) = 2 \), which implies $g=0$ and \( n = 1 \). This is the case that was excluded at the beginning of this section.
\end{itemize}
\end{proof}

For any $A \in \Omega_n(S/C)$ and any positive integer $m$, we have
\[
F^m_{S/C} \cdot A \in \Omega_n(S^{(q^m)}/C).
\]
As a result, we can apply Proposition \ref{finite cohomology of stab} to all $S^{(q^m)} \to C$ and obtain a uniform bound on the corresponding cohomology groups. This is useful for the following result.

\begin{proposition}\label{finite fiber under finite map}
    Assume $C' \to C$ is a finite Galois cover of curves, and let $\sim'$ denote the equivalence relation on $\Omega_n(S'/C')$. Then any equivalence class in $\Omega_n(S'/C')/\sim'$ intersects only finitely many equivalence classes in $\Omega_n(S/C)/\sim$. More precisely,
    \[
    \#\left\{ \mathcal{B} \in \Omega_n(S/C)/\sim \mid \rho(\mathcal{B}) \cap \mathcal{A} \neq \emptyset \right\} < \infty \quad \text{for all } \mathcal{A} \in \Omega_n(S'/C')/\sim'.
    \]
\end{proposition}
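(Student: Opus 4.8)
The plan is to show that $\rho$ descends to a well-defined map $\bar\rho\colon \Omega_n(S/C)/\!\sim\, \longrightarrow\, \Omega_n(S'/C')/\!\sim'$, and to observe that the statement is precisely the assertion that every fiber of $\bar\rho$ is finite. For the descent I would record that $\rho$ is compatible with each ingredient of Definition \ref{def equivalence}: the relative Frobenius commutes with base change, so $\rho(F_{S/C}^m A)=F_{S'/C'}^m\rho(A)$; every $\psi\in\Aut_C(S)$ and every $C$-isomorphism $S^{(q^r)}\to S^{(q^s)}$ pulls back to the corresponding object over $C'$; and a non-isotrivial family remains non-isotrivial after a finite base change while an isotrivial (or trivial) one remains isotrivial (or trivial), so the relation governing $S'/C'$ is always at least as coarse as the base change of the relation governing $S/C$. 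Hence $A\sim B$ implies $\rho(A)\sim'\rho(B)$. Since $\rho$ is injective — the finite surjection $S'\to S$ recovers $A$ as the image of $\rho(A)$ — the fiber of $\bar\rho$ over a class $\mathcal A$ is exactly $\{\mathcal B : \rho(\mathcal B)\cap\mathcal A\neq\emptyset\}$, which is what we must bound.

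The core mechanism is the cohomological bound already established. I would treat first the non-isotrivial case, where $\sim$ and $\sim'$ are the orbit relations of $\Aut_C(S)$ and $\Aut_{C'}(S')$. Fix $A$ with $\rho(A)\in\mathcal A$ (if none exists the fiber is empty). Then $\Sigma_A=\Aut_{C'}(S')\cdot\rho(A)\cap\rho(\Omega_n(S/C))$ is exactly $\mathcal A\cap\rho(\Omega_n(S/C))$, the set of all $\rho(B)$ landing in $\mathcal A$. I would send a class $\mathcal B$ in the fiber to the $\Aut_C(S)$-orbit of $\rho(B)$ inside $\Sigma_A$; the $G$-equivariance of $\Aut_C(S)$ makes this independent of the representative $B$, and injectivity of $\rho$ makes the assignment injective. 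Thus the fiber injects into $\Aut_C(S)\backslash\Sigma_A$, which embeds into $H^1(G,M_A)$ by Lemma \ref{injective of multi orbit to coh} and is finite by Proposition \ref{finite cohomology of stab}.

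It remains to incorporate Frobenius in the trivial and isotrivial cases, where a single $\sim$-class absorbs infinitely many $\Aut_C(S)$-orbits — exactly the phenomenon illustrated in the introduction — so one cannot simply quote the $\Aut$-orbit count. Here I would invoke Theorem \ref{preperiodic}: the tower $\{S^{(q^m)}\to C\}$ is preperiodic, so only finitely many twists $T_1,\dots,T_k$ occur up to $C$-isomorphism, and the same holds over $C'$. Given $\mathcal B$ in the fiber with witness $\psi'\,F^r\rho(B)=F^s\rho(A)$, the $C'$-isomorphism $\psi'\colon (S')^{(q^r)}\to (S')^{(q^s)}$ forces $(S')^{(q^r)}\cong (S')^{(q^s)}$, which by preperiodicity over $C'$ pins $r$ and $s$ to a common residue and hence (after raising both by a common multiple of the period, replacing $\psi'$ by its Frobenius twist) to a single stable twist $T_j$. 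Transporting the witness through the fixed isomorphisms of Theorem \ref{preperiodic}, it becomes a genuine $\Aut_{C'}(T_j')$-relation between the images of $F^r B$ and $F^s A$ in $\Omega_n(T_j/C)$, to which the mechanism of the previous paragraph applies verbatim — using that Proposition \ref{finite cohomology of stab} furnishes a bound uniform over all the twists $S^{(q^m)}$. Summing the finitely many contributions over $j=1,\dots,k$ gives a finite fiber.

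The main obstacle is concentrated in this last step: the Frobenius bookkeeping in the isotrivial case, namely reconciling the two a priori independent exponents $r$ and $s$ and using preperiodicity of the twist tower over both $C$ and $C'$ to collapse them into a bounded window, so that $\psi'$ becomes an automorphism of one fixed twist and the uniform cohomological count applies. A secondary point to verify carefully is that the transported source classes are not overcounted — distinct $\sim$-classes $\mathcal B$ must yield distinct orbits after transport, which holds because the transporting isomorphisms and the Frobenius are themselves built into the definition of $\sim$. I would also double-check descent-compatibility when the geometric type drops under base change (isotrivial over $C$ but trivial over $C'$), since $\sim'$ is defined by the type of $S'/C'$ rather than that of $S/C$.
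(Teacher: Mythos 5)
Your non-isotrivial argument is correct and is exactly the paper's: the fiber over $\mathcal{A}$ injects into $\Aut_C(S)\backslash\Sigma_A$, which embeds into $H^1(G,M_A)$ by Lemma \ref{injective of multi orbit to coh} and is finite by Proposition \ref{finite cohomology of stab}. The gap is in the isotrivial case. First, you use Theorem \ref{preperiodic} in the converse direction: the theorem says that large, congruent-mod-$d$ exponents give isomorphic twists, not that an isomorphism $(S')^{(q^r)}\cong (S')^{(q^s)}$ forces $r\equiv s$; nothing in the statement (no minimality of $d$, no injectivity on residues) licenses ``pins $r$ and $s$ to a common residue.'' Second, and more seriously, even after collapsing to a single stable twist $T_j$, your transported witnesses relate the image of $F^r B$ to the image of $F^{s}A$ where the exponent $s$ \emph{varies with the class} $\mathcal{B}$. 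The cohomological mechanism you want to quote bounds the number of $\Aut_C$-orbits inside a \emph{single} $\Aut_{C'}$-orbit, but the targets $F^{s}\rho(A)$ for varying $s$ sweep out infinitely many distinct $\Aut_{C'}$-orbits: for a trivial family $S'=D\times C'$ with $A$ the graph of a separable $t\colon C'\to D$, the divisor $F^{s}\rho(A)$ is the graph of $F_D^{s}\circ t$, whose degree over $D$ grows with $s$, so no element of the (finite, when $g\ge 2$) group $\Aut_{C'}(S')$ can match two of them. So ``applies verbatim'' fails precisely at the phenomenon from the introduction that you correctly flagged as the difficulty.

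The missing idea, which is how the paper closes this, is to compare the $B_i$'s \emph{with each other} rather than each with $A$. Take $B_0,\dots,B_N$ pairwise non-$\sim$-equivalent with $\rho(B_i)\in\mathcal{A}$, and take witnesses $\psi_i F^{r_i}\rho(B_0)=F^{s_i}\rho(B_i)$ against the fixed reference $B_0$. Frobenius-twisting each $\psi_i$ normalizes all sources to a common $F^{r}\rho(B_0)$ with $r=\max r_i$, producing isomorphisms $\phi_i\colon S'^{(q_0^{r})}\to S'^{(q_0^{t_i})}$, $t_i=s_i+r-r_i$, with $\phi_i F^{r}\rho(B_0)=F^{t_i}\rho(B_i)$. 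Now pigeonhole the $t_i$ modulo the preperiodicity constant $d$: at least $\lfloor (N-N_0)/d\rfloor$ of them are congruent and exceed $N_0$, so (this is the legitimate, forward use of Theorem \ref{preperiodic}) the corresponding twists are all $C$-isomorphic, and the composites $\phi_j\phi_i^{-1}$ exhibit the transported elements $F^{kt_i}_{S/C}(B_i)$ as pairwise $\Aut_{C'}$-equivalent inside one fixed twist while remaining pairwise non-$\sim$-equivalent over $C$. Only now does your cohomological count apply (using the uniformity over all twists $S^{(q^m)}$ noted before Proposition \ref{finite cohomology of stab}), giving $\lfloor (N-N_0)/d\rfloor\le$ const and hence a bound on $N$. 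Without this cross-comparison and pigeonhole step, the isotrivial case of your proof does not go through.
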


\begin{proof}
Let $G$ be the Galois group of $C' \to C$, and assume that the field of definition of $C'$ is $\mathbb{F}_{q_0}$ where $[\F_{q_0}:\F_q]=k$.
If the family $\pi: S \to C$ is non-isotrivial, then for any $A \in \Omega_n(S/C)$, the fiber of $\rho$ containing the class of $A$ injects into the cohomology group $H^1(G, M_A)$, which is finite. Hence, only finitely many equivalence classes in $\Omega_n(S/C)/\sim$ can map to a fixed class in $\Omega_n(S'/C')/\sim'$.

Now assume that $\pi: S \to C$ is isotrivial, and fix a class $\mathcal{A} \in \Omega_n(S'/C')/\sim'$. Let $d$ and $N_0$ be the constants from Theorem \ref{preperiodic} applied to the family $\pi: S \to C$. Let $B_0, \dots, B_N \in \rho^{-1}(\mathcal{A})$ be elements that are not $\sim$-equivalent in $\Omega_n(S/C)$.

Since all $\rho(B_i)$ lie in the same equivalence class $\mathcal{A}$, for each $i = 1, \dots, N$, there exist integers $r_i, s_i \geq 0$ and an isomorphism
\[
\psi_i: S'^{(q_0^{ r_i})} \xrightarrow{\sim} S'^{(q_0^{ s_i})}
\]
such that
\[
\psi_i \circ F^{r_i}_{S'/C'}(\rho(B_0)) = F^{s_i}_{S'/C'}(\rho(B_i)).
\]

Let $r := \max\{r_1, \dots, r_N\}$. Then for each $i$, there is an isomorphism
\[
\phi_i: S'^{(q_0^{ r})} \xrightarrow{\sim} S'^{(q_0^{ t_i})}, 
\]
where $ t_i := s_i + r - r_i$, satisfying
\[
\phi_i \circ F^r_{S'/C'}(\rho(B_0)) = F^{t_i}_{S'/C'}(\rho(B_i)).
\]

By the pigeonhole principle, among the values $t_i$, there exists a subset $I \subset \{1, \dots, N\}$ of size at least $\lfloor (N - N_0)/d \rfloor$ such that all $t_i$ for $i \in I$ are congruent modulo $d$ and satisfy $t_i > N_0$.

Without loss of generality, suppose $I = \{1, \dots, \lfloor (N - N_0)/d \rfloor \}$. Since $F_{S'/C'} = F_{S/C}^k \times \mathbf{1}_C$, we have for any $i, j \in I$:
\[
\phi_j \circ \phi_i^{-1} \circ \rho\left(F_{S/C}^{k t_i}(B_i)\right) = \rho\left(F_{S/C}^{k t_j}(B_j)\right).
\]
This shows that all the points $F_{S/C}^{k t_i}(B_i)$ for $i \in I$ lie in the same equivalence class under $\sim'$ after applying $\rho$, but they are pairwise non-equivalent under $\sim$. Therefore, they must map to a finite set of representatives inside $\Omega_n(S^{(q^{k t_1})}/C)$ that are pairwise equivalent under $\Aut_{C'}(S'^{(q_0^{ t_1})})$.

Hence, the number $\lfloor (N - N_0)/d \rfloor$ is bounded by the universal constant given in Proposition \ref{finite cohomology of stab}. This implies that $N$ is bounded, and thus only finitely many equivalence classes in $\Omega_n(S/C)/\sim$ can intersect $\mathcal{A}$.
\end{proof}

\begin{proposition}
    Theorem \ref{section theo} implies Theorem \ref{etale theo}.
\end{proposition}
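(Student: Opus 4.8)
The plan is to chain together the three descent results established in this section, using the hypothesis (Theorem \ref{section theo}) only over a suitable base change. Throughout I set aside the excluded case $g=0$, $n=1$, which is treated directly via Lemma \ref{annoying case}.

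First I would invoke Proposition \ref{prop that div becomes sections} to produce a single finite, generically Galois cover $E \to C$, depending only on $C$ and $n$ (and uniform in $S$), together with the base-change map
\[
\rho \colon \Omega_n(S/C) \hookrightarrow \Omega_{n,E}(S_E/E), \qquad S_E := S\times_C E.
\]
The essential feature secured there is that after this one universal base change every étale degree-$n$ divisor on $S$ becomes a disjoint union of $n$ sections over $E$, so that the image of $\rho$ lands inside the "section" locus $\Omega_{n,E}(S_E/E) \subseteq \Omega_n(S_E/E)$. Note that $E$ is again a smooth affine curve over a finite field (possibly a proper extension of $\F_q$), and that base change preserves smoothness, properness, the genus, and geometric connectedness of the generic fiber; hence all hypotheses needed to run the theorems over $E$ remain in force.

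Next I would apply the hypothesis, Theorem \ref{section theo}, to the family $S_E \to E$, which gives that $\Omega_{n,E}(S_E/E)/\sim$ is finite. Since $\sim$ is one fixed relation on $\Omega_n(S_E/E)$ and $\Omega_{n,E}(S_E/E)\subseteq \Omega_n(S_E/E)$, the $\sim$-classes of $\Omega_n(S_E/E)$ that contain a divisor from $\Omega_{n,E}(S_E/E)$ are in bijection with $\Omega_{n,E}(S_E/E)/\sim$, hence finite; call them $\mathcal{A}_1,\dots,\mathcal{A}_M$. As the image of $\rho$ lies entirely in $\Omega_{n,E}(S_E/E)$, every class $\mathcal{B}\in \Omega_n(S/C)/\sim$ satisfies $\rho(\mathcal{B})\cap \mathcal{A}_i\neq\emptyset$ for some $i$: indeed, for any representative $B\in\mathcal{B}$ the divisor $\rho(B)$ lies in $\Omega_{n,E}(S_E/E)$ and so in one of the $\mathcal{A}_i$.

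Finally I would invoke Proposition \ref{finite fiber under finite map} with $C'=E$, which asserts that for each fixed $\mathcal{A}_i$ only finitely many classes $\mathcal{B}\in\Omega_n(S/C)/\sim$ have $\rho(\mathcal{B})\cap\mathcal{A}_i\neq\emptyset$. Summing over the finitely many indices then yields
\[
\#\,\bigl(\Omega_n(S/C)/\sim\bigr)\;\le\;\sum_{i=1}^{M}\#\{\mathcal{B} : \rho(\mathcal{B})\cap \mathcal{A}_i\neq\emptyset\}\;<\;\infty,
\]
which is precisely Theorem \ref{etale theo}. The assembly itself is formal; the real work has already been front-loaded into the cited propositions, and the main obstacle lives inside Proposition \ref{finite fiber under finite map}: in the isotrivial case one must know that a \emph{single} finiteness bound controls all the fibers simultaneously, which is exactly why the preperiodicity of Theorem \ref{preperiodic} (collapsing the infinitely many Frobenius twists $S^{(q^m)}\to C$ into finitely many isomorphism types) together with the family-independent cohomological bound of Proposition \ref{finite cohomology of stab} is required, so that one cover $E$ and one uniform constant suffice.
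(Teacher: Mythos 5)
Your proposal is correct and follows essentially the same route as the paper: invoke Proposition \ref{prop that div becomes sections} to get the cover $E \to C$ and the injection $\rho$ into $\Omega_{n,E}(S_E/E)$, apply Theorem \ref{section theo} over $E$, and then use Proposition \ref{finite fiber under finite map} (with $C'=E$) to bound the number of classes of $\Omega_n(S/C)/\sim$ meeting each of the finitely many classes over $E$. Your additional remarks on the class-counting bookkeeping, the excluded case $g=0$, $n=1$, and the uniformity hidden in Proposition \ref{finite fiber under finite map} are accurate elaborations of the same argument rather than a different approach.
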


\begin{proof}
Let \( S \to C \) be a smooth and proper family of curves over \( C \), as described in the introduction. By Proposition~\ref{prop that div becomes sections}, there exists a finite, generically Galois cover of curves \( E \to C \) such that there is a natural injection
\[
    \rho \colon \Omega_n(S/C) \xhookrightarrow{\quad} \Omega_{n,E}(S_E/E),
\]
where \( S_E := S \times_C E \). Let \( \sim' \) denote the equivalence relation on \( \Omega_{n,E}(S_E/E) \). Then, by Theorem~\ref{section theo},
\[
    \#\left( \Omega_{n,E}(S_E/E)/\sim' \right) < \infty.
\]
By Proposition~\ref{finite fiber under finite map}, for any \( \mathcal{A} \in \Omega_{n,E}(S_E/E) \), we have
\[
    \#\left\{ \mathcal{B} \in \Omega_n(S/C)/\sim \;\middle|\; \rho(\mathcal{B}) \cap \mathcal{A} \neq \emptyset \right\} < \infty.
\]
Therefore, the number of equivalence classes in \( \Omega_n(S/C)/\sim \) is finite as well.
\end{proof}

\section{Proof of Theorem \ref{section theo} }
\label{final}

By Proposition \ref{finite fiber under finite map}, it suffices to prove Theorem \ref{section theo} when $\pi\colon S \to C$ is either trivial or non-isotrivial. Moreover, we may assume that the Picard group of the base is trivial.

\subsection{Non-isotrivial case}

If $g = 0$, then after a finite base change, the family $\pi\colon S \to C$ becomes isotrivial. Therefore, if $\pi$ is non-isotrivial, the genus of the generic fiber satisfies $g \ge 1$.

Let $S_0$ denote the generic fiber of $\pi\colon S \to C$, which is then a smooth, projective curve of genus $g \ge 1$ over the function field $K := \F_q(C)$ and denote $R:=\F_q[C]$ with  the assumption that $R$ has a trivial Class group. 

\subsection*{$\boldsymbol{g = 1}$}

If $S(C) \neq \emptyset$, then the generic fiber $S_0$ has a $K$-rational point and is thus isomorphic to an elliptic curve over $K$. By \cite{ulmer2011park}, $S_0$ can be given by a Weierstrass equation
\begin{equation}\label{elliptic eq}
Y^2Z + a_1XYZ + a_3YZ^2 = X^3 + a_2X^2Z + a_4XZ^2 + a_6Z^3
\end{equation}
inside $\mathbb{P}^2_K$, where $a_i \in K$. The model $S$ over $C$ corresponds to equation \eqref{elliptic eq} with $a_i \in R$, and discriminant being a unit in $R$, ensuring good reduction over $\mathrm{Spec}\,R$.

Since $R$ is a principal ideal domain, every $K$-rational point of $S_0$ can be represented as $[\alpha : \beta : \gamma]$ with $\alpha, \beta, \gamma \in R$ and $\gcd(\alpha, \beta, \gamma) = 1$, i.e., the ideal $(\alpha, \beta, \gamma)$ is equal to $R$. Now, fix the origin $O := [0 : 1 : 0]$ on $S_0$.

\begin{lemma}
The set of points in \( S_0(K) \) whose reduction modulo every prime ideal \( \pp \) of \( R \) is different from \( O \) is finite.
\end{lemma}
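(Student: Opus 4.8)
The plan is to reinterpret the hypothesis as an integrality condition on a coordinate function and then invoke Voloch's theorem (Theorem~\ref{siegel like}). Working in the Weierstrass model \eqref{elliptic eq}, set $x := X/Z \in K(S_0)$, a nonconstant function whose only pole is at the origin $O$, of order $2$. Let $M := \overline{C} \setminus C$ be the finite set of places of $K$ lying over the points at infinity, so that the finite places $v \notin M$ are exactly those attached to the prime ideals $\pp$ of $R$.

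First I would make precise the dictionary between reduction and valuations. Since the discriminant of \eqref{elliptic eq} is a unit in $R$, the model has good reduction at every $\pp$, so the standard local theory of elliptic curves applies fiberwise: for a point $P \in S_0(K)$ and a prime $\pp$ of $R$, the reduction $\overline{P} \bmod \pp$ equals $\overline{O}$ if and only if $v_\pp(x(P)) < 0$ (equivalently, $P$ lies in the kernel of reduction at $\pp$). Indeed, the only point of the Weierstrass curve on the line $Z = 0$ is $O$, and a point reduces onto this line exactly when its $x$-coordinate acquires a pole at $\pp$. Consequently, $P$ reduces to a point different from $O$ modulo every prime $\pp$ of $R$ if and only if $v_\pp(x(P)) \ge 0$ for all such $\pp$; that is, if and only if $x(P)$ is $M$-integral in the sense of Theorem~\ref{siegel like}.

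With this translation in hand, the finiteness is immediate. The family is non-isotrivial with $g = 1$, so $S_0$ is an elliptic curve with nonconstant $j$-invariant, and $x$ is a nonconstant function defined over $K$. Applying Voloch's theorem (Theorem~\ref{siegel like}) with $f = x$ and this choice of $M$ gives
\[
\#\left\{ P \in S_0(K) : v(x(P)) \ge 0 \ \ \forall v \notin M \right\} < \infty,
\]
which is precisely the set described in the lemma.

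I expect the only real point requiring care to be the reduction–valuation dictionary: one must confirm that under the good-reduction hypothesis the Weierstrass coordinates detect reduction to $O$ correctly, i.e.\ that $\overline{P} = \overline{O}$ forces $v_\pp(x(P)) < 0$ and conversely. This is standard once good reduction is known (it is the statement that reduction to $O$ coincides with the kernel-of-reduction filtration coming from the formal group), but it is the step where the global hypotheses---good reduction everywhere, and $R$ a principal ideal domain so that points admit coprime integral coordinates---are genuinely used. Everything else is a direct citation of Theorem~\ref{siegel like}.
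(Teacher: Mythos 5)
Your proof is correct and takes essentially the same approach as the paper: both translate the condition that a point never reduces to $O$ into $M$-integrality of a Weierstrass coordinate function and then conclude by citing Voloch's theorem (Theorem~\ref{siegel like}). The only differences are cosmetic — the paper works with $Y/Z$ and an explicit coprime-coordinate computation in the principal ideal domain $R$ (showing $\gamma$ must be a unit), whereas you work with $X/Z$ and the standard good-reduction valuation dictionary; both routes are valid.
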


\begin{proof}
    Let $f(X, Y, Z) := Y/Z$. Then $f$ is a nonconstant rational function on $S_0$. By Theorem \ref{siegel like}, the set
\[
W:=\left\{ [\alpha : \beta : \gamma] \in S_0(K) \mid v_\pp(\beta/\gamma) \ge 0 \text{ for all prime ideals } \pp \subset R \right\}
\]
is finite.

 Let $P = [\alpha : \beta : \gamma] \in S_0(K)$ be a point such that for every prime ideal $\pp$ of $R$, the reduction of $P$ modulo $\pp$ is not equal to $O$. Suppose that $\gamma \in \pp$ for some prime ideal $\pp$  of $R$. Substituting into \eqref{elliptic eq}, we find:
\begin{align*}
\beta^2\gamma + a_1\alpha\beta\gamma + a_3\beta\gamma^2 &= \alpha^3 + a_2\alpha^2\gamma + a_4\alpha\gamma^2 + a_6\gamma^3 \\
\implies \alpha^3 &\in \pp \\
\implies \alpha &\in \pp.
\end{align*}
Since $(\alpha, \beta, \gamma) = R$, we must have $\beta \notin \pp$. Thus, the reduction of $P$ modulo $\pp$ is $[0 : 1 : 0]$, which is a contradiction.  Hence \( \gamma \notin \pp \) for any prime ideal \( \pp \subset R \), so \( \gamma \in R^\times \). It follows that \( v_\pp(\beta/\gamma) \ge 0 \) for all \( \pp \), so \( [\alpha : \beta : \gamma] \in W \). Since \( W \) is finite, the result follows.
\end{proof}

Now consider the group law on $S_0(K)$ with identity element $O$. Any $x \in S_0(K)$ induces a translation automorphism $t_x$ of $S_0$. Theorem \ref{ext aut} implies that $t_x$ extends uniquely to an automorphism of $S$ over $C$.

It follows that for $n = 1$, the automorphism group of $S$ over $C$ acts transitively on $\Omega_{1,C}(S/C)$, so there is only one equivalence class. For $n \ge 2$, any $n$-tuple $\{x_1, \dots, x_n\} \in \Omega_{n,C}(S/C)$ is equivalent to $\{O, x_2 - x_1, \dots, x_n - x_1\}$. Since $x_i - x_1 \in S_0(K)$ and the set of such points is finite by the argument above, there are only finitely many such $n$-tuples up to equivalence. This completes the proof in this case. 

\subsection*{$\boldsymbol{g \ge 2}$}

In this case, by Theorem \ref{samuel theo}, the set $S_0(K)$ is finite. Hence, $\Omega_{n,C}(S/C)$, which consists of subsets of $S_0(K)$ of size $n$, is finite.

\subsection{Trivial case}

By applying Proposition \ref{finite fiber under finite map} repeatedly, it is enough to prove the following proposition.

\begin{proposition}\label{final prop}
    Let $S = D \times_{\F_q} C$, where $C$ is a smooth affine curve over $\F_q$ with trivial Picard group, and $D$ is a smooth proper curve over $\F_q$ with at least one $\F_q$-rational point. Then Theorem \ref{section theo} holds for the family $S \to C$.
\end{proposition}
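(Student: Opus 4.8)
The plan is to translate the problem into one about morphisms $C\to D$ and then to control both their separable parts and their Frobenius exponents. Since $S = D\times_{\F_q} C$, a section of $S\to C$ is the same as a morphism $C\to D$; the relative Frobenius $F_{S/C}$ acts on sections by post-composition with the absolute Frobenius $F_D$; and $\Aut_C(S)$ is described by Theorem \ref{ext aut} when $g_D\ge 1$ and by Section \ref{Proj space pver dede} when $g_D=0$. Thus an element $A\in\Omega_{n,C}(S/C)$ is a set $\{f_1,\dots,f_n\}$ of morphisms $C\to D$ whose graphs are pairwise disjoint over $C$. Constant morphisms land in the finite set $D(\F_q)$, while a nonconstant morphism meets every point of $D$ outside the image of the finitely many points of $\overline C\setminus C$; hence any collection mixing constant and nonconstant sections is subject to only finitely many constraints, and it suffices to treat the all-nonconstant collections. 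By Lemma \ref{maps form} I write $f_i=F_D^{r_i}\circ t_i$ with $t_i\colon C\to D$ separable of degree $d_i$.

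First I would control the Frobenius exponents relative to one another. If $f_i,f_j$ are disjoint and $r_i\ge r_j$, then since $F_D$ is bijective on $\overline{\F_q}$-points, the disjointness condition $F_D^{r_i}\circ t_i(x)\neq F_D^{r_j}\circ t_j(x)$ may be cancelled to give $F_D^{\,r_i-r_j}\circ t_i(x)\neq t_j(x)$ for all $x$. Proposition \ref{frob control} then shows that $r_i-r_j$ takes only finitely many values. Consequently, as soon as the separable parts range over a fixed finite set, all the pairwise differences $|r_i-r_j|$ are bounded by a single constant $B$.

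The remaining, decisive task is to bound the separable degrees $d_i$. When $g_D\ge 2$ this is immediate from Riemann–Hurwitz (Theorem \ref{Rim-Hur formula}) applied to the induced map $\overline C\to D$: the relation $2g_{\overline C}-2=d_i(2g_D-2)+\deg R\ge d_i(2g_D-2)$ forces $d_i\le (2g_{\overline C}-2)/(2g_D-2)$, so by Lemma \ref{f. m. sep map} there are only finitely many possible $t_i$. When $g_D\le 1$ no such geometric bound is available, so I would instead use the large automorphism group together with Silverman's unit theorem (Theorem \ref{silv, units}). Replacing $A$ by an equivalent representative through a suitable $\psi\in\Aut_C(S)$—for $g_D=0$ normalizing several of the sections to $0,1,\infty$ by $\PGL_2(R)$ as in Section \ref{Proj space pver dede}, and for $g_D=1$ translating by $-f_1$ and descending through the Weierstrass coordinate—the disjointness conditions $f_i(x)\neq f_j(x)$ become assertions that certain differences (respectively cross-ratios) are $M$-units on $C$. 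Silverman's theorem then either bounds the separable degree of each such unit or forces it into the constant field; in either case the separable parts $t_i$ are confined to a finite set via Lemma \ref{f. m. sep map}.

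Finally I would assemble the pieces: with the $t_i$ in a fixed finite set and, after a common Frobenius shift (exactly the freedom encoded by $F^r,F^s$ in the relation $\sim$), all $r_i$ lying in a bounded range $[0,B]$, the data $\{(t_i,r_i)\}$ describing a representative of each class takes only finitely many values, whence $\Omega_{n,C}(S/C)/\!\sim$ is finite. I expect the main obstacle to be the case $g_D=1$. Here the generic fiber is a constant elliptic curve, so its $j$-invariant is constant and Voloch's theorem (Theorem \ref{siegel like}) does not apply; moreover the integral points avoiding the origin genuinely form an infinite set, for instance the Frobenius iterates $F_D^{\,i}\circ t$. The crux is therefore to show, via the descent of disjointness to $M$-unit equations and Silverman's theorem, that the \emph{only} remaining source of infinitude is post-composition by powers of $F_D$—precisely the ambiguity removed by $\sim$—so that passing to the Frobenius quotient restores finiteness.
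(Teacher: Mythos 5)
Your reduction to morphisms $C\to D$, the decomposition $f_i=F_D^{r_i}\circ t_i$ via Lemma \ref{maps form}, the cancellation of a common Frobenius power and the use of Proposition \ref{frob control} to bound the differences $r_i-r_j$, and the appeal to Lemma \ref{f. m. sep map} once the separable degrees are bounded all match the paper's argument, as does the $g_D\ge 2$ case (Riemann--Hurwitz on $\overline{C}\to D$). Your $g_D=0$ route is also sound, though it differs from the paper's: after normalizing three sections to $0,1,\infty$ by $\PGL_2(R)$, a further disjoint section $f$ satisfies $f+(1-f)=1$ with $f,1-f\in\O_M^\times$ (where $M=\overline{C}\setminus C$), so Theorem \ref{silv, units} bounds its separable degree by $2g_C-2+m$; the paper gets the identical bound by applying Riemann--Hurwitz to the punctured map $\overline{C}\setminus t^{-1}(V)\to D\setminus V$, $V=\{0,1,\infty\}$, using $\chi(D\setminus V)=-1<0$.

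The genuine gap is the case $g_D=1$, which you yourself flag as the crux but do not close. After translating $f_1$ to the zero section (legitimate, via Theorem \ref{ext aut}), the remaining sections avoid $O$ over $C$; but this does \emph{not} "become an $M$-unit equation by descending through the Weierstrass coordinate." If $f:C\to D$ avoids $O$, then $x\circ f$ and $y\circ f$ are merely $M$-\emph{integers}: the functions $x-e_i$ on $D$ vanish at the $2$-torsion points, which $f$ may hit anywhere on $C$, so none of the natural coordinate expressions is an $M$-unit, and Theorem \ref{silv, units} requires precisely $u+v=1$ with both $u,v$ units. Making a unit equation appear would force a genuine extra step (a Siegel-type descent through a covering of $C$, e.g.\ pulling back by multiplication by $2$ on $D$), which you have not supplied. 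The gap is easily repaired without Silverman's theorem, by the same puncture trick as in genus $0$: the separable part $t$ of a section avoiding $O$ satisfies $\overline{t}^{\,-1}(O)\subseteq \overline{C}\setminus C$, and since $\chi(D\setminus\{O\})=2-2\cdot 1-1=-1<0$, Theorem \ref{Rim-Hur formula} applied to $\overline{C}\setminus \overline{t}^{\,-1}(O)\to D\setminus\{O\}$ gives
\[
2-2g_C-m \;\le\; \chi\bigl(\overline{C}\setminus \overline{t}^{\,-1}(O)\bigr) \;\le\; -\deg t,
\]
so $\deg t\le 2g_C+m-2$; then Lemma \ref{f. m. sep map} and Proposition \ref{frob control} finish exactly as in your other two cases. (This is the paper's argument, and it also covers $g_D=0$, so Silverman's theorem can be dispensed with entirely.)
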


A section of $S \to C$ corresponds bijectively to a morphism $C \to D$. Furthermore, the action of the relative Frobenius $F_{S/C}$ on sections coincides with the action of the absolute Frobenius $F_D$ on $\Hom(C, D)$ via post-composition. Moreover, for any $r \in \mathbb{Z}_{\geq 0}$ and any subset $A \subset S(C)$,
\[
F_{S/C}^r \cdot A \in \Omega_{n,C}(S/C) \iff A \in \Omega_{n,C}(S/C).
\]
We prove the proposition by considering the genus of the curve \(D\). For convenience, let 
\[
m = \#\Bigl(\overline{C}(\overline{\F_q}) \setminus C(\overline{\F_q})\Bigr).
\]

\subsection*{\(\boldsymbol{g \geq 2}\)}

Define 
\[
W = \{ t \colon C \to D \mid t \text{ is separable} \}.
\]
Every \(t \in W\) uniquely extends to a separable morphism \(t \colon \overline{C} \to D\). By applying the Riemann–Hurwitz formula, we obtain
\[
2g_C - 2 = d(2g - 2) + \deg(R),
\]
where \(d\) is the degree of \(t\) and \(R\) is the ramification divisor. This formula shows that \(d\) is bounded. Hence, by Lemma \ref{f. m. sep map}, the number of separable morphisms from \(C\) to \(D\) is finite; that is, \(W\) is finite.

Moreover, any \(A \in \Omega_{n,C}(S/C)\) corresponds to a subset \(A' \subset \Hom(C,D)\) that can be written in the form
\[
A' = \{ x_1, \dots, x_\alpha,\; F_D^{r_1} \circ t_1, \dots, F_D^{r_\beta} \circ t_\beta \},
\]
where
\begin{align*}
    & \alpha + \beta =n\\
    & x_i \in D(\F_q) \quad\, \forall i=1 ,\dots , \alpha \\
    & t_j \in W \quad\quad\quad \forall j=1 ,\dots , \beta\\
    &0\le r_1 \le \dots \le r_\beta
\end{align*}
Since the action of \(F_D\) on constant maps is trivial, the set \(A'\) is equivalent to
\[
B' = \{ x_1, \dots, x_\alpha,\; t_1,\, F_D^{r_2 - r_1} \circ t_2,\, \dots,\, F_D^{r_\beta - r_1} \circ t_\beta \}.
\]
Because both \(D(\F_q)\) and \(W\) are finite sets, there are only finitely many choices for the \(x_i\) and the \(t_j\).

Furthermore, by the definition of \(\Omega_{n,C}(S/C)\), any two distinct elements \(l_1, l_2 \in B'\) must differ on  \(C\). Then, by Proposition \ref{frob control}, each difference \(r_j - r_1\) can only take finitely many possible values. Consequently, the set \(B'\) itself is finite. 

This shows that there are only finitely many equivalence classes for divisors in \(\Omega_{n,C}(S/C)\), completing the proof of Proposition \ref{final prop} in this case.

\subsection*{\(\boldsymbol{g = 1}\)}

Fix a point \(O \in D(\F_q)\), and let \(K\) denote the function field of \(C\). Then \(D' := D \times_{\F_q} K\) is an elliptic curve over \(K\), with identity element \(O\). This curve is the generic fiber of the surface \(S \to C\).

Translation by an element of \(D'(K)\) defines an automorphism of the elliptic curve \(D'\) over \(K\). By Theorem \ref{ext aut}, such a translation extends to a \(C\)-automorphism of \(S = D \times C\). Therefore, for any \(A \in \Omega_{n,C}(S/C)\), there exists an automorphism \(\psi \in \Aut_C(S)\) such that
\[
O \times C \in \psi \cdot A.
\]

Now define
\[
W = \left\{ t \colon C \to D \;\middle|\; t^{-1}(O) \subseteq \overline{C} \setminus C,\; t \text{ is separable} \right\}.
\]
Each \(t \in W\) extends to a finite morphism \(\overline{C} \setminus t^{-1}(O) \to D \setminus \{O\}\). Let \(R\) denote the ramification divisor of this map, and let \(d = \deg(t)\). Then, the Riemann–Hurwitz formula yields:
\begin{align*}
    \chi\bigl(\overline{C} \setminus t^{-1}(O)\bigr) &= d \cdot \chi(D \setminus \{O\}) - \deg(R), \\
    2 - 2g_C - m \le \chi\bigl(\overline{C} \setminus t^{-1}(O)\bigr) &\le d \cdot \chi(D \setminus \{O\}) = d(2 - 2 \cdot 1 - 1), \\
    \implies \quad d &\le 2g_C + m - 2.
\end{align*}

This provides a bound on the degree \(d\), and by Lemma \ref{f. m. sep map}, the set \(W\) is finite. Consequently, any element of \(\Omega_{n,C}(S/C)\) corresponds to a  set of the form
\[
\{ O, x_1, \dots, x_\alpha,\; F_D^{r_1} \circ t_1, \dots, F_D^{r_\beta} \circ t_\beta \},
\]
where
\begin{align*}
    & 1+\alpha + \beta =n\\
    & x_i \in D(\F_q) \quad\quad\, \forall i=1 ,\dots , \alpha \\
    & t_j \in W \quad\quad\quad\quad \forall j=1 ,\dots , \beta\\
    &0\le r_1 \le \dots \le r_\beta
\end{align*}

As in the previous case, this set is equivalent to
\[
\{ O, x_1, \dots, x_\alpha,\; t_1, F_D^{r_2 - r_1} \circ t_2, \dots, F_D^{r_\beta - r_1} \circ t_\beta \},
\]
in which each \(x_i\), \(t_j\), and difference \(r_j - r_1\) has only finitely many possibilities. This completes the proof of Proposition \ref{final prop} in the case \(g = 1\).

\subsection*{\(\boldsymbol{g = 0}\)}

Let \(R := \F_q[C]\), so that \(C = \Spec\, R\), and assume \(D \cong \mathbb{P}^1_{\F_q}\). Then \(S = \mathbb{P}^1_R\), and its group of \(C\)-automorphisms is \(\PGL_2(R)\).

\begin{lemma}
For \(n = 1, 2, 3\), the group \(\PGL_2(R)\) acts transitively on \(\Omega_{n,C}(S/C)\).
\end{lemma}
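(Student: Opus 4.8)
We need to show that $\PGL_2(R)$ acts transitively on $\Omega_{n,C}(S/C)$ for $n = 1, 2, 3$, where $S = \mathbb{P}^1_R$ and $R = \F_q[C]$ has trivial Picard group (equivalently, $R$ is a PID).

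**The key observation: elements as unramified points over $C$.**

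An element $A \in \Omega_{n,C}(S/C)$ consists of $n$ disjoint sections $C \to \mathbb{P}^1_R$, i.e., $n$ points in $\mathbb{P}^1(K)$ (where $K = \F_q(C)$) that remain distinct in every fiber $\mathbb{P}^1_{\kappa(\pp)}$. Since $R$ is a PID, each such section is $[a_i : b_i]$ with $a_i, b_i \in R$ and $\gcd(a_i, b_i) = 1$ (the ideal $(a_i, b_i) = R$). The disjointness-in-fibers condition says that for any two sections $[a_i:b_i], [a_j:b_j]$, the "resultant" $a_i b_j - a_j b_i$ is a **unit** in $R$ (it never vanishes at any prime, else the two sections would collide in that fiber).

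**The plan.**

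The strategy is linear algebra over $R$: reduce any configuration to a standard one using $\GL_2(R)$ transformations.

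*Case $n = 1$.* A single point $[a:b]$ with $(a,b) = R$ can be completed to a basis of $R^2$: since $\gcd(a,b)=1$, there exist $c, d \in R$ with $ad - bc = 1$. The matrix $\begin{pmatrix} a & c \\ b & d \end{pmatrix} \in \SL_2(R) \subset \PGL_2(R)$ sends $[1:0]$ to $[a:b]$, giving transitivity.

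*Case $n = 2$.* Two disjoint sections $[a_1:b_1], [a_2:b_2]$ satisfy $a_1 b_2 - a_2 b_1 \in R^\times$. Thus the matrix $M = \begin{pmatrix} a_1 & a_2 \\ b_1 & b_2 \end{pmatrix}$ lies in $\GL_2(R)$, and $M^{-1}$ sends our pair to $\{[1:0], [0:1]\}$, the standard pair. So all configurations of size $2$ are equivalent.

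*Case $n = 3$.* After applying the $n=2$ argument, we may assume the first two points are $[1:0]$ and $[0:1]$. The third point $[a:b]$ must be disjoint from both, forcing $a \in R^\times$ and $b \in R^\times$ (disjointness from $[0:1]$ gives $a$ a unit; from $[1:0]$ gives $b$ a unit). Scaling, we may take it to be $[1:u]$ for some $u \in R^\times$. Now the stabilizer of $\{[1:0],[0:1]\}$ in $\PGL_2(R)$ consists of diagonal matrices $\operatorname{diag}(\lambda, 1)$ with $\lambda \in R^\times$, together with the swap. Applying $\operatorname{diag}(u^{-1}, 1)$ (or absorbing the unit appropriately) sends $[1:u]$ to $[1:1]$. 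Hence every size-$3$ configuration is equivalent to the standard triple $\{[1:0], [0:1], [1:1]\}$.

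**Where the difficulty lies.**

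The main subtlety is justifying that disjointness in every fiber translates precisely to the resultant $a_i b_j - a_j b_i$ being a unit, not merely nonzero. This requires checking: the two sections meet in the fiber over $\pp$ precisely when $[a_i : b_i] \equiv [a_j : b_j]$ in $\mathbb{P}^1(\kappa(\pp))$, which happens iff $a_i b_j - a_j b_i \in \pp$. Since this must fail for all primes $\pp$, the element $a_i b_j - a_j b_i$ lies in no maximal ideal, hence is a unit. The crucial use of the PID (trivial Picard group) hypothesis is twofold: it lets us write each section with coprime coordinates in $R$, and it guarantees that a unimodular column extends to an element of $\SL_2(R)$. The obstruction for $n \geq 4$ (explaining why the lemma stops at $3$) is that a fourth point $[1:v]$ with $v \in R^\times$ need not be movable to a standard position while fixing the first three, since $R^\times$ may be large—this is the classical $j$-invariant/cross-ratio phenomenon, and is precisely why the Frobenius equivalence is needed for larger $n$.
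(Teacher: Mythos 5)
Your proof is correct and follows essentially the same route as the paper: write each section as $[a:b]$ with $(a,b)=R$ using the trivial-class-group hypothesis, observe that fiberwise disjointness forces the resultant $a_ib_j-a_jb_i$ to be a unit, and move the configuration to the standard one $\{[1:0],[0:1],[1:1]\}$ by explicit matrices (the paper normalizes the points one at a time, whereas you standardize the first two simultaneously via the matrix formed by the two sections, a cosmetic difference). Only a trivial slip in the $n=3$ step: the diagonal matrix fixing $[1:0],[0:1]$ and sending $[1:u]$ to $[1:1]$ is $\operatorname{diag}(u,1)$, equivalently $\operatorname{diag}(1,u^{-1})$ in $\PGL_2(R)$, not $\operatorname{diag}(u^{-1},1)$.
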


\begin{proof}
Since \(R\) has a trivial class group, every section of \(\mathbb{P}^1_R\) is of the form \([a:b]\) for some \(a, b \in R\) with \((a,b) = R\). Moreover, two such sections are the same up to units in \(R\).

Let \(A_1 \in \Omega_{1,C}(S/C)\), so \(A_1 = \{[a_1 : b_1]\}\) for some \(a_1, b_1 \in R\) with \((a_1, b_1) = R\). Then there exist \(x, y \in R\) such that \(a_1 x + b_1 y = 1\), and the matrix
\[
\psi_1 = \begin{bmatrix}
b_1 & -a_1 \\
x & y
\end{bmatrix} \in \PGL_2(R)
\]
defines an automorphism that maps \([a_1 : b_1]\) to \([0 : 1]\).

Now let \(A_2 \in \Omega_{2,C}(S/C)\). Without loss of generality, we may assume \([0 : 1] \in A_2\), and denote the other point by \([a_2 : b_2]\). Since \([0 : 1]\) and \([a_2 : b_2]\) are disjoint sections, \(a_2\) must be a unit in \(R\). Then the matrix
\[
\psi_2 = \begin{bmatrix}
a_2^{-1} & 0 \\
-a_2^{-1} b_2 & 1
\end{bmatrix} \in \PGL_2(R)
\]
defines an automorphism that fixes \([0 : 1]\) and maps \([a_2 : b_2]\) to \([1 : 0]\).

Now let \(A_3 \in \Omega_{3,C}(S/C)\). By a similar argument, we may assume \([0 : 1], [1 : 0] \in A_3\), and let the third point be \([a_3 : b_3]\). Since all three sections are disjoint, both \(a_3\) and \(b_3\) must be units in \(R\). Then the matrix
\[
\psi_3 = \begin{bmatrix}
a_3^{-1} & 0 \\
0 & b_3^{-1}
\end{bmatrix} \in \PGL_2(R)
\]
defines an automorphism that fixes \([0 : 1]\) and \([1 : 0]\), and maps \([a_3 : b_3]\) to \([1 : 1]\). This completes the proof.
\end{proof}

Now assume \(n \ge 4\). By a sequence of automorphisms, any element of \(\Omega_{n,C}(S/C)\) is equivalent to
\[
A = \{ [0:1], [1:0], [1:1], [a_4:b_4], \dots, [a_n:b_n] \}.
\]
Let \(V = \{ [0:1], [1:0], [1:1] \}\), and define
\[
W = \left\{ t \colon C \to D \;\middle|\; t(C) \subset D \setminus V,\; t \text{ is separable} \right\}.
\]
Then each \(t \in W\) extends to a finite morphism \(\overline{C} \setminus t^{-1}(V) \to D \setminus V\). Let \(R\) be the ramification divisor of this map, and let \(d = \deg(t)\). Then the Riemann–Hurwitz formula gives:
\begin{align*}
\chi\bigl(\overline{C} \setminus t^{-1}(V)\bigr) &= d \cdot \chi(D \setminus V) - \deg(R), \\
2 - 2g_C - m \le \chi\bigl(\overline{C} \setminus t^{-1}(V)\bigr) &\le d \cdot \chi(D \setminus V) = d(2 - 2 \cdot 0 - 3), \\
\implies \quad d &\le 2g_C + m - 2.
\end{align*}

As before, this bound on \(d\) implies that the set \(W\) is finite by Lemma \ref{f. m. sep map}. Hence, any \(A \in \Omega_{n,C}(S/C)\) corresponds to
\[
\{ [0:1], [1:0], [1:1], x_1, \dots, x_\alpha,\; F_D^{r_1} \circ t_1, \dots, F_D^{r_\beta} \circ t_\beta \},
\]
where:
\begin{align*}
    &3+ \alpha + \beta =n\\
    & x_i \in D(\F_q) \quad\quad\, \forall i=1 ,\dots , \alpha \\
    & t_j \in W \quad\quad\quad\quad \forall j=1 ,\dots , \beta\\
    &0\le r_1 \le \dots \le r_\beta
\end{align*}

As in the previous cases, this is equivalent to
\[
\{ [0:1], [1:0], [1:1], x_1, \dots, x_\alpha,\; t_1, F_D^{r_2 - r_1} \circ t_2, \dots, F_D^{r_\beta - r_1} \circ t_\beta \},
\]
where each \(x_i\), \(t_j\), and \(r_j - r_1\) takes only finitely many values. This completes the proof of Proposition \ref{final prop} in the case \(g = 0\).

\section*{Acknowledgments}

I am deeply grateful to my advisor, Jacob Tsimerman, for his invaluable guidance, insight, and support throughout the development of this work. His thoughtful feedback was essential at every stage of the project. I also sincerely thank Daniel Litt for many helpful conversations, insightful suggestions, and constant encouragement.

 \bibliographystyle{alpha}
 \bibliography{Main}
\end{document}